\documentclass{amsart}

\usepackage{mathstuff}
\usepackage{csquotes}

\let \ssn \subsurfacenorm

\MakeOuterQuote{"}
\setcounter{section}{0}
\DeclareMathOperator{\diam}{diam}

\begin{document}
\sloppy 
% \begin{spacing}{1.2}
\title{Simple Connectivity of Spheres in the Curve Complex}
\author{Richard Cao and Rishibh Prakash}
\begin{abstract}
    For a fixed radius $r$ and a point $o$ in the curve complex of a surface, we define the sphere of radius $r$ to be the induced subgraph on the set of vertices of distance $r$ from $o$. We show that these spheres are almost simply connected for surfaces of high enough complexity, in the sense that loops in the sphere bound an embedded disk contained in a small neighborhood of the sphere.
\end{abstract}
\maketitle
    
\setcounter{tocdepth}{1}
\tableofcontents
    
\section{Introduction}
\subsection{Context and Previous Results}
%\cite{aougab25}

The complex of curves for a compact Riemann surface of genus $g$ with $n$ marked points encodes intersection patterns of essential simple closed curves. It is a simplicial complex where the vertices are defined to be simple closed curves, and two vertices share an edge when the curves have disjoint representatives. It was first introduced by Harvey \cite{Harvey} and is a central object in Teichmüller theory, geometric group theory, and low-dimensional topology. Since the introduction of the curve complex, one of the most influential results on the curve complex has been the theorem of Masur and Minsky~\cite{MMI} establishing that it is Gromov hyperbolic. 
Its combinatorial structure also reflects deep properties of the mapping class group~\cite{MMII}. Furthermore, Minsky uses it to study the ends of hyperbolic 3-manifolds and prove Thurston's Ending Lamination Conjecture ~\cite{ELC}. 

In a Gromov hyperbolic space, the boundary at infinity is defined by equivalence classes of geodesic rays, and has a metric that records how geodesics diverge. The hyperbolicity of the curve complex gives it a boundary at infinity, making it a natural object to study. The topology of the boundary of curve complexes is well studied : Klarreich~\cite{klarreich2018boundary} shows that it is homeomorphic to ending lamination space, and Gabai shows it is $(n-1)$-connected where $n$ is a function of the genus and number of  punctures~\cite{Gabai1,Gabai2}. Continuing this study of the boundary, Alex Wright~\cite{wright2024} reproves and improves the result that the boundary is connected using that spheres in the curve complex are connected once the underlying surface has sufficiently high complexity. 

In this paper, we build upon Wright’s result by showing that spheres in the curve complex are almost simply connected for surfaces of high enough complexity. Our approach combines tools from Teichm\"uller theory with explicit geometrical cases. In particular, we make frequent use of the Bounded Geodesic Image Theorem to control distances in the curve complex, and we work directly in the curve complex by constructing and analyzing explicit families of curves on the underlying surface.

\subsection{Main Results}

Let $\Sigma=\Sigma_{g,n}$ be a connected surface with genus $g$ and $n$ punctures. We always assume $g$ and $n$ are such that $\Sigma$ has a complete, finite volume hyperbolic metric and that $(g,n) \neq (0, 3)$. We define the complexity of $\Sigma$ as $\xi(\Sigma) = 3g - 3 + n$. A curve on $\Sigma$ is the homotopy class of a simple closed curve. Let $C\Sigma$ be the curve complex of $\Sigma$, which is the simplicial complex with $n$-simplices corresponding to $n+1$ curves that can be realized disjointly, with face relation given by inclusion. Fix an arbitrary vertex $o\in C\Sigma$. For $r\geq0, ~r\in\ZZ$, define $S_r = S_r(o)$ be the sphere of radius $r$ in $C\Sigma$, which is the subcomplex induced by all the vertices distance $r$ away from $o$.

We will prove the following theorem.
\begin{theorem}
    Let $\xi(\Sigma)\geq 10$. There is some $M$ depending only on $\Sigma$ such that for any embedded loop $\gamma \subset S_r$ there exists an embedded disk $\Delta \subset B_{r+M}\setminus B_{r-3}$ with boundary $\gamma$.
\end{theorem}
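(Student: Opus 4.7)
The plan is to build $\Delta$ by an inward radial filling: iteratively attach 2-cells to reduce $\gamma$ to loops at smaller and smaller radii, until the remaining configuration is small enough to contract directly. Since each inward step can decrease the radius of the new vertices by at most one, and the target annular region allows up to three levels inward, I expect the procedure to terminate in roughly three layers.

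For the core step, I fix an edge $v_i v_{i+1}$ of $\gamma$ and exploit that $v_i \cup v_{i+1}$ is a disjoint pair, so it cuts $\Sigma$ into subsurfaces of total complexity $\xi(\Sigma) - 2 \geq 8$, giving ample room to select an essential curve $w_i$ disjoint from both. This produces a triangle $(v_i, v_{i+1}, w_i)$ in $C\Sigma$ with $d(o, w_i) \geq r - 1$. By choosing the $w_i$ coherently across consecutive triples $v_{i-1}, v_i, v_{i+1}$ of vertices of $\gamma$, I want to arrange that $w_{i-1}$ and $w_i$ are adjacent (or equal), giving a polygonal inner loop $\gamma'$ together with an annular filling between $\gamma$ and $\gamma'$. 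The Bounded Geodesic Image Theorem then controls the outer radius: any subsurface projection forcing $w_i$ to drift too far from $o$ would conflict with the geodesics $[o, v_i]$, pinning $d(o, w_i) \leq r + M_0$ for some $M_0 = M_0(\Sigma)$. Iterating the annular construction twice more yields a concentric sequence of polygonal loops, each in a fixed neighborhood of $S_r$; the innermost loop has bounded combinatorial size and can be contracted directly via explicit surface constructions.

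The main obstacle I anticipate is embeddedness. The iterated triangle construction produces only a singular disk --- a simplicial map from a disk into $C\Sigma$ --- and distinct 2-cells may collide in unintended ways, spoiling the disk topology. Resolving these collisions should be handled by local surgeries on the $w_i$, using the extra flexibility granted by $\xi(\Sigma) \geq 10$ to supply many alternative choices of auxiliary curves in each complementary subsurface. A secondary subtlety is the precise inner bound: the contracting step at the innermost layer must not dip below distance $r - 2$ from $o$, and verifying this requires careful subsurface-projection bookkeeping throughout the construction. This is also the step where I expect the explicit case-analysis flavor advertised in the introduction to appear most heavily, since a clean general argument seems unlikely to pin down the constant $3$ in the inner bound without examining the geometry of each layer directly.
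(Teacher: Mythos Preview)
Your approach has a genuine gap at its heart: the claim that after two or three inward layers the innermost loop has ``bounded combinatorial size and can be contracted directly'' is unsupported and in fact false. Spheres $S_{r-1}, S_{r-2}$ in $C\Sigma$ are infinite, and there is no mechanism by which replacing each edge $v_iv_{i+1}$ by a triangle with apex $w_i$ shortens the resulting inner loop $\gamma'$; it can be just as long as $\gamma$ (or longer, once you account for connecting the $w_i$). Three iterations leave you with an arbitrary loop in $S_{r-3}$ or thereabouts, and you are no closer to a disk than when you started. Your use of BGIT is also backwards: in the paper BGIT is used to force curves \emph{away} from $o$ (by arranging large subsurface projection so that any geodesic to $o$ must pass through a fixed curve), not to cap their distance from above.

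The paper's strategy is essentially the opposite of yours. It first produces \emph{some} triangulation of $\gamma$, using that balls $B_R$ are $N$-almost simply connected (this is the nontrivial ``base case'' of Section~4, proved via Teichm\"uller theory and the simple connectivity of $\partial C\Sigma$). This initial disk lives in $B_{r+N+7}$ but may dip arbitrarily close to $o$. The work is then to push the \emph{innermost} simplices \emph{outward}, one radius at a time: a face in $S_k$ is replaced using the triangle lemma, an edge in $S_k$ using the diamond lemma, and an isolated vertex in $S_k$ using the cone lemma (which in turn invokes almost-simple-connectivity of complements of balls in the lower-complexity surface $\Sigma\setminus z$). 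Iterating from $k=0$ up to $k=r-3$ clears out $B_{r-3}$. The constant $3$ in the inner bound comes not from a three-layer inward construction but from the preliminary homotopy needed to make the edges of $\gamma$ ``good'' before the pushing begins.
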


\begin{remark}
    We expect the proof of this theorem to generalize to higher connectivity, as in for each $k$, there is some $n,M,N$ such that for surfaces of complexity greater than $n$, for any $r$ and any embedded $k$-sphere in $S_r$, there exists an embedded $(k+1)$-ball in  $B_{r+M}\setminus B_{r-N}$ whose boundary is the $k$-sphere. 
\end{remark}

Also, the bound we obtain for the complexity of the surface in the theorem is not necessarily strict, we also expect it to be improved: 

\begin{question}
Can the same conclusion hold for surfaces of lower complexity, such as the six times punctured sphere?
\end{question}

Wright shows that the boundary of the curve complex is linearly connected, which means there is some $K \in \RR$ such that for any $x,y \in \partial C\Sigma$, the Gromov boundary of the curve complex, there is a compact connected set in $\partial C \Sigma$ containing $x,y$ with diameter at most $Kd(x,y)$. The result follows from using the connectivity of spheres to inductively build a connected path in the boundary. We expect that this proof can be modified to show that the boundary of the curve complex of surfaces of high enough complexity are linearly simply connected:

\begin{conjecture}
    For $\xi(\Sigma)\geq 10$, the Gromov boundary of the curve complex $\partial C\Sigma$ is linearly simply connected, which is to say there exists some constant $K$ depending on $\Sigma$ such that if $f: S^1 \to \partial CS$ is continuous then there is some continuous $\bar{f}: D^2 \to \partial CS$ with $\bar{f}|_{S^1}=f$ and $\diam(\bar{f}(D^2)) \leq K \diam(f(S^1))$.
\end{conjecture}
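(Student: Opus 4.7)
The plan is to build the filling $\bar{f}:D^2\to\partial C\Sigma$ scale by scale, using the Theorem at each scale to fill an approximating loop in a sphere $S_r$ and then stitching the fillings together. Set $d=\diam f(S^1)$ and $r_0\sim\log(1/d)$, calibrated by the visual metric on $\partial C\Sigma$. For each integer $r\geq r_0$ I would construct a discrete approximation $\gamma_r\subset S_r$ of $f$: choose a fine cyclic partition $s_0,\dots,s_N\in S^1$ with consecutive Gromov products $(f(s_i)\mid f(s_{i+1}))_o\geq r$; letting $v_i=\rho_i(r)$ for $\rho_i$ a geodesic ray from $o$ representing $f(s_i)$, this Gromov product bound forces $d_{C\Sigma}(v_i,v_{i+1})\leq K_0$ for some $K_0$ depending only on $\Sigma$. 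Using Wright's connectivity of $S_r$ with a quantitative length bound, connect consecutive $v_i$ by short paths in $S_r$ and perturb mildly to obtain an embedded loop $\gamma_r\subset S_r$ of uniformly bounded diameter in $C\Sigma$. Applying the Theorem, $\gamma_r$ bounds an embedded disk $\Delta_r\subset B_{r+M}\setminus B_{r-3}$.

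To pass between scales, build an annular cobordism $A_r\subset B_{r+M+1}\setminus B_{r-3}$ between $\gamma_r$ and $\gamma_{r+1}$: for each vertex $v$ of $\gamma_r$ choose a \emph{descendant} $v'\in\gamma_{r+1}$, namely a curve adjacent to $v$ lying on an extension of a geodesic ray through $v$, and triangulate the resulting combinatorial strip using BGIT to certify that adjacent descendants $v',w'$ form simplices with $v,w$. Nesting $\Delta_{r_0}\cup A_{r_0}\cup A_{r_0+1}\cup\cdots$ realizes $D^2$ as a locally finite triangulation whose 1-skeleton embeds in $C\Sigma$. Define $\bar{f}$ on vertices by extending each radial ray to infinity, giving a boundary point, and extend to edges and 2-cells by geodesic/convex-combination interpolation on the visual sphere; on $\partial D^2=S^1$ the limit recovers $f$ by the fineness of the approximations.

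For the diameter bound, every vertex of $\Delta_r\cup A_r$ lies in $B_{r+M+1}\setminus B_{r-3}$, so its image in $\partial C\Sigma$ is within visual distance $Ce^{-r}$ of any other image produced at scale $r$ (by the standard correspondence between Gromov products at $o$ and the visual metric). Summing the geometric series,
\[
\diam\bar{f}(D^2)\ \leq\ \sum_{r\geq r_0}Ce^{-r}\ \leq\ C'e^{-r_0}\ \asymp\ C'd,
\]
which gives the linear bound with constant $K=C'$.

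The main obstacle is constructing the cobordism annuli $A_r$ and ensuring they piece together with the $\Delta_r$ into a coherent (ideally embedded) triangulated disk: the difficulty is of the same flavor as the embedded-disk condition in the Theorem, and will likely require a bigon analogue of that result in which the boundary loop has two sides living at consecutive radii. A related, more analytic difficulty is verifying genuine continuity of $\bar{f}$ across the triangles, which reduces to showing that rays through successive refinements converge uniformly in the visual topology; this should follow from the geometric decay of per-scale diameter contributions but requires care at the points of $\partial D^2$ where infinitely many annuli accumulate.
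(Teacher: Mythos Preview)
The statement is labeled a \emph{Conjecture} in the paper and is not proved there; the authors only remark that Wright's inductive argument for linear connectivity of $\partial C\Sigma$ ought to admit a modification once one knows spheres are almost simply connected. Your outline follows exactly that suggestion, so there is no paper proof to compare against.

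That said, there is a genuine gap in your diameter estimate. From the Theorem you only know $\Delta_r\subset B_{r+M}\setminus B_{r-3}$; this is a shell of bounded \emph{width}, but it says nothing about the diameter of $\Delta_r$ inside $C\Sigma$, and hence nothing about the pairwise Gromov products $(v\mid w)_o$ for $v,w\in\Delta_r$. Your claim that any two images produced at scale $r$ lie within visual distance $Ce^{-r}$ presupposes $(v\mid w)_o\gtrsim r$, i.e.\ that $d_{C\Sigma}(v,w)$ is uniformly bounded, and this simply does not follow from the shell containment. Equally problematic is the definition of $\bar f$ on vertices by ``extending a radial ray through $v$ to infinity'': such an extension is far from canonical, and two adjacent vertices of $\Delta_r$ can have ray-extensions that diverge immediately past radius $r$, so adjacent vertices need not land close to one another in $\partial C\Sigma$. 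Without control of this sort the interpolation over edges and $2$-cells cannot be made continuous, let alone of small diameter.

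To salvage the approach you would need either (i) a diameter bound on $\Delta_r$ in $C\Sigma$, which the paper does not give and which seems genuinely harder than the shell statement, or (ii) a coherent system of rays through the entire nested complex $\Delta_{r_0}\cup A_{r_0}\cup A_{r_0+1}\cup\cdots$ along which consecutive levels fellow-travel. Option (ii) is essentially the substance of Wright's construction in the connectivity case, and carrying it out for a two-dimensional filling---in particular arranging that the rays through \emph{interior} vertices of $\Delta_r$ cohere with those through $\gamma_r$ and with the annuli $A_r$---is exactly the work the conjecture is asking for, not something that falls out of the Theorem as stated.
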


% In particular, we can also gain a better understanding of the coarse geometry of the curve graph as a whole by understanding the fine results. This idea is exemplified in \cite{wright2024} where the linear connectivity of the Gromov boundary (coarse) follows from an analysis of the connectivity of $S_r$ (fine). 

\subsection{Acknowledgments} We thank Kasra Rafi for his guidance on this paper.

\section{Sufficient Conditions for Connectivity}
    
We recall and rephrase some definitions from Wright~\cite{wright2024}. For the following definitions, fix $\Sigma=\Sigma_{g,n}$ and let $\Upsilon_{h, m, p, u} = \Upsilon\subset \Sigma$ be a subsurface. Here $h$ is the genus of $\Upsilon$, $m$ is the number of punctures it contains, $p$ is the number of paired boundaries and $u$ is the number of unpaired boundaries. The subsurface $\Upsilon$ is meant to be thought of as a connected component of the complement of a multicurve on $\Sigma$. The paired boundaries are understood as boundaries of $\Upsilon$ that correspond to the same curve when considered as a curve in $\Sigma$, and the unpaired ones are boundary components that do not. Let $\hat{\Upsilon}$ denote the surface of genus $h+p$ with $m$ punctures and $u$ boundary components obtained by gluing each of the $p$ pairs.

\begin{figure}[h!]
\centering
\includegraphics[width=1\linewidth]{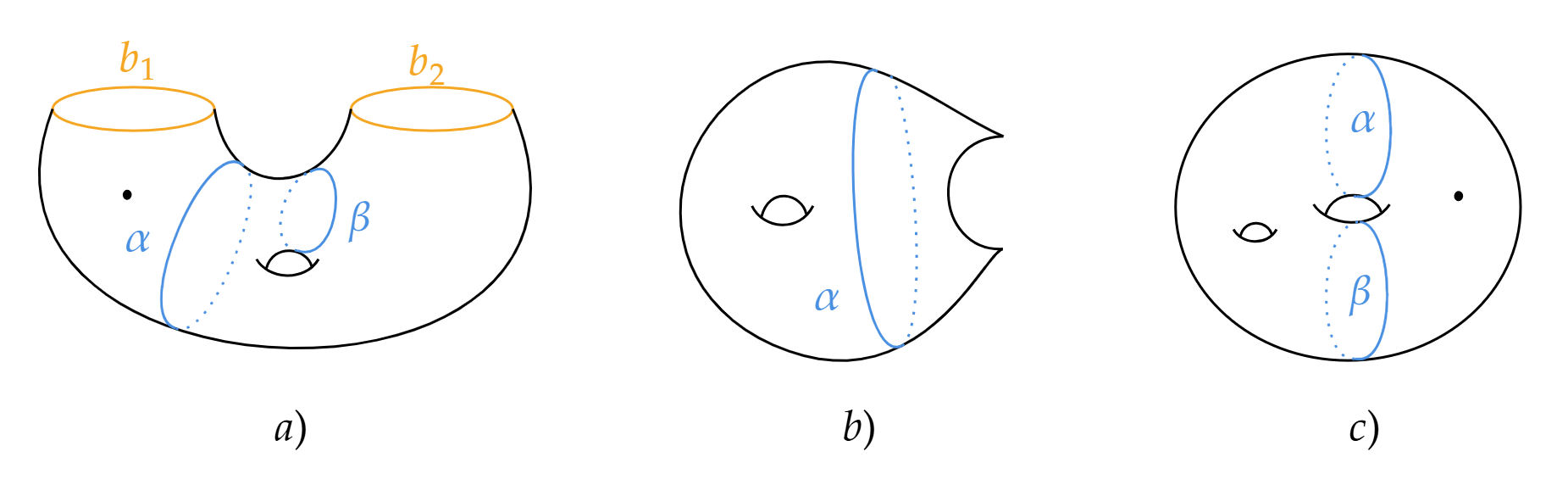}
\caption{a) An eventually non-separating multicurve, b) a pants curve, c) an essentially non-separating multicurve}
\label{examples}
\end{figure}

We call $\alpha$ a multicurve on $\Upsilon$ when $\alpha$ is a collection of curves on $\Upsilon$ that can realized disjointly.  A multicurve on $\Upsilon$ is called \emph{eventually non-separating} if it does not separate $\hat{\Upsilon}$. A curve $\alpha$ on $\Upsilon$ is called a \emph{pants curve} if it bounds a genus 0 surface with 2 punctures. A curve will be called \emph{essentially non-separating} if it is either eventually non-separating or a pants curve. A multi-curve $\alpha\cup \beta$ with two non-isotopic components will be called \emph{essentially non-separating} if $\alpha$ and $\beta$ are each essentially non-separating, and either
\begin{enumerate}
\item $\alpha\cup \beta$ is eventually non-separating, or
\item at least one of $\alpha$ or $\beta$  is a pants curve, or
\item or $\alpha\cup \beta$ bounds a genus zero subsurface with no boundary components and 1 puncture. 
\end{enumerate}
Equivalently, there is an edge between $\alpha$ and $\beta$ if $\hat{\Upsilon} \setminus (\alpha, \beta)$ has at most one component which is not homeomorphic to a thrice punctured sphere.

For a cut surface $\Upsilon=\Upsilon_{h,m,p,u}\subset \Sigma$, the essentially non-separating graph $C_0\Upsilon$ is the subgraph of $C\Upsilon$ whose vertices are essentially non-separating, with an edge from $\alpha$ to $\beta$ if $\alpha \cup \beta$ is essentially non-separating.

% The subsurface $\Upsilon$ is thought of as a non-pants component of the complement of a multicurve.
% When the multicurve consists of at least three curves, we will write $b=2p+u$ and label $\Upsilon=\Upsilon_{h,m,b}$. 

\begin{definition}
    For any multicurve ($k$-simplex) $X=\{x_0,x_1, \dots, x_k\}\subset C\Sigma$, $\Sigma \setminus (\bigcup_{i=1}^k x_{i})$ is a disjoint collection of connected subsurfaces $\{U_j\}_{j=1}^m$. If there is a unique $l$ such that $U_l$ is not homeomorphic to $\Sigma_{0,3}$, $U_l$ is called the \emph{unique non-pants} of $\Sigma\setminus X$.
\end{definition}

\begin{definition}
    Let $\Upsilon\subset V\subset \Sigma$ be subsurfaces. A $k$-simplex $X=\{x_0,x_1,...x_k\}\subset C\Upsilon$ is called \emph{good} in $V$ if for any subsimplex $\{x_{i_1},...x_{i_j}\}\subset X$, $V\setminus \{x_{i_1},...x_{i_j}\}$ has a unique non-pants. If $V=\Sigma$ then $X$ is simply called good.
    The full subcomplex in $C\Upsilon$ consisting of all good simplices in $V$ we denote as $(\Upsilon,V)$.
\end{definition}

We take some time to explore the relationship between the good subcomplex $(\Upsilon, \Sigma)$ and essentially non-separating subcomplex $C_0\Upsilon$. We note that good vertices are in $C_0$, and that $C_0$ is contained in the subcomplex of good edges in the cases where there are few enough boundaries.

\begin{proposition}\label{vertsgood}
    Suppose $\Upsilon=\Sigma\setminus X$ where $X$ is a vertex or an edge and $\xi(\Sigma)\geq 3$. Then good vertices are in $C_0\Upsilon$.
\end{proposition}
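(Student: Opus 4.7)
The plan is to use the good condition on $v$, combined with $\xi(\Sigma)\geq 3$, to narrow $v$ down to two topological types in $\Sigma$, and then verify the essentially non-separating condition case by case. For a single vertex, being good in $\Sigma$ means $\Sigma\setminus v$ has a unique non-pants component; since $\xi(\Sigma)\geq 3$, a non-separating $v$ automatically leaves a complement of complexity at least $2$ (hence non-pants), while a separating $v$ splits $\Sigma$ into two pieces whose complexities sum to $\xi(\Sigma)-1 \geq 2$, forcing exactly one side to be a pair of pants by uniqueness. So $v$ is either non-separating in $\Sigma$ or a pants curve in $\Sigma$.

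The pants case is clean: the pair of pants $P$ that $v$ cuts off in $\Sigma$ is $\Sigma_{0,3}$, which has no non-peripheral essential simple closed curves. Since each component of $X$ is essential and disjoint from $v=\partial P$, no component of $X$ can sit inside $P$, so $P$ lies inside the component of $\Upsilon$ containing $v$ --- call it $\Upsilon_0$ --- and hence $v$ bounds a pants in $\Upsilon_0$ and is a pants curve there. For the non-separating case, the goal is to show $v$ does not separate $\hat{\Upsilon}_0$. When $|X|=1$ with $x$ non-separating, or when $|X|=2$ with $\Sigma\setminus X$ connected, the construction of $\hat{\Upsilon}$ gives $\hat{\Upsilon}_0 = \Sigma$ directly, so the conclusion is immediate. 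When $|X|=1$ with $x$ separating, a short contrapositive suffices: the other component of $\Sigma\setminus x$ attaches through the single curve $x$, which lies on one side of a hypothetical separation of $\Upsilon_0$ by $v$, so $v$ would separate $\Sigma$ as well, contradicting our hypothesis.

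The main obstacle is the remaining case $|X|=2$ with $\Sigma\setminus X$ disconnected. I would enumerate the possible ways the four boundary sides of $x_1, x_2$ are distributed among the components of $\Upsilon$, discarding distributions incompatible with $\Sigma$ being connected; in most of the remaining sub-cases, the single-side-attachment argument still closes the case. The delicate configuration is when $\Upsilon_0$ has two unpaired boundaries coming from the two different curves, and another component of $\Upsilon$ glues onto both --- then in principle this other component could \emph{bridge} the two sides of a hypothetical separation of $\hat{\Upsilon}_0$, keeping $v$ non-separating in $\Sigma$ while $v$ still separates $\hat{\Upsilon}_0$. Handling this is the key step: one expects to combine the restricted topology of essential separating curves in a subsurface with two boundary components with the good condition on $v$ to show that the bridging scenario forces $v$ to cut off a pants in $\Upsilon_0$, returning us to the pants case and placing $v$ in $C_0\Upsilon$.
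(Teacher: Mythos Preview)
Your overall strategy matches the paper's: show a good vertex $v$ is either a pants curve or non-separating in $\Sigma$, handle the pants case by noting the bounded pants sits inside $\Upsilon_0$, and then run a case analysis on the structure of $X$ for non-separating $v$. The paper organizes its cases by which of $x,y$ are separating in $\Sigma$; you organize by $|X|$ and connectedness of $\Sigma\setminus X$; the content is the same, and your treatment of the easy cases (where $\hat{\Upsilon}_0=\Sigma$, or where a single-side-attachment contrapositive applies) is fine.

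The gap is exactly where you suspect, and your hoped-for resolution does not work. In the bridging configuration you cannot conclude that goodness of $v$ forces $v$ to cut off a pants in $\Upsilon_0$. Take $\Sigma=\Sigma_{3,1}$ and disjoint non-separating curves $x,y$ that jointly cut $\Sigma$ into $B$ (genus $0$, two boundaries, one puncture: a pair of pants) and $A$ (genus $2$, two boundaries, no punctures). Then $\Upsilon_0=A$ has $p=0$, $u=2$, so $\hat A=A$. Now let $v\in CA$ separate $A$ into two genus-$1$ pieces, one containing the $x$-boundary and the other the $y$-boundary. Then $v$ is non-separating in $\Sigma$ (the two sides are joined through $B$), hence good; but $v$ separates $\hat A=A$, and neither side of $v$ in $A$ is a genus-$0$ surface with two punctures, so $v$ is neither eventually non-separating nor a pants curve in $A$. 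Thus $v\notin C_0A$. Note that the paper's own case split for ``both $x,y$ non-separating'' lists only the options of two or one set of paired boundaries and so also overlooks this $p=0$ configuration; what is really needed is either an additional hypothesis on $X$ ruling out the bridging case, or the alternative reading of $\hat\Upsilon$ in which one glues boundary pairs across all components of $\Sigma\setminus X$ (so that $\hat\Upsilon=\Sigma$ and the claim is immediate).
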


\begin{proof}
    Take $a$ a good vertex. If $a$ is a pants curve then it is in $C_0\Upsilon$. If $a$ is non-separating on $\Sigma$, we show that it is eventually non-separating.
    
    Suppose $X = \{x, y\}$. If both of $x,y$ are separating, $\Upsilon$ has no paired boundaries and so we want to show that $a$ is non-separating on $\Upsilon$. So $\Sigma\setminus(x,y)$ is 3 components and $a$ must lie in one of the components and be non separating on that component since it is disjoint from $x,y$, so it is non-separating on $\Upsilon$.
    
    If exactly $x$ is non separating, say $y$ separates $\Sigma$ into components $L,R$ and $x$ lies in $R$. If $a$ lies in $L$ we have that $L$ has no paired boundaries so $a$ is eventually non-separating. If $a$ lies in $R$ then we have that $R$ has a paired boundary coming from $x$ and an unpaired one from $y$, so upon gluing the boundaries from $x$ we get that $a$ is eventually non-separating. If $x,y$ are both non-separating, we either get that $\Upsilon$ has 2 sets of paired boundaries in which case gluing them gives $\hat{\Upsilon}=\Sigma$ so $a$ is eventually non-separating, or $\Upsilon$ has one set of paired boundaries, where since $a$ is non-separating on $\Sigma$ means that it is also non-separating on $\hat{\Upsilon}$ as we wanted.
\end{proof}

\begin{proposition}\label{translate}
    Suppose $\Upsilon=\Sigma\setminus X$ where $X$ is a vertex or an edge and $\xi(\Sigma)\geq 3$. Then all the edges of $C_0\Upsilon$ are good.
\end{proposition}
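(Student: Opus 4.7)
The plan is to verify, for any edge $\{\alpha, \beta\}$ of $C_0\Upsilon$, that the three non-empty subsimplex complements $\Sigma\setminus\alpha$, $\Sigma\setminus\beta$, and $\Sigma\setminus(\alpha\cup\beta)$ each have a unique non-pants component. I will use the equivalent characterization of an edge in $C_0\Upsilon$, namely that $\hat{\Upsilon}\setminus(\alpha\cup\beta)$ has at most one non-pants component (call it $N$), together with the fact that $\alpha$ and $\beta$ are individually essentially non-separating in $\Upsilon$. The main tool is to relate $\Sigma\setminus(\cdot)$ to $\hat{\Upsilon}\setminus(\cdot)$ by attaching $L := \overline{\Sigma\setminus\Upsilon}$ along the unpaired boundaries of $\Upsilon$.

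First I would dispense with the singleton subsimplices by adapting the argument of Proposition~\ref{vertsgood}. If $\alpha$ is a pants curve in $\Upsilon$, then the pants it bounds has two punctures coming from $\Sigma$ and no $X$-boundary, so it sits in $\Sigma$ unchanged as a pants; the other side of $\Sigma\setminus\alpha$ is then non-pants by an Euler characteristic check using $\xi(\Sigma)\geq 3$. If $\alpha$ is eventually non-separating in $\Upsilon$, then $\hat{\Upsilon}\setminus\alpha$ is connected, and attaching $L$ along unpaired boundaries (which are disjoint from $\alpha$) preserves connectedness, so $\Sigma\setminus\alpha$ is connected and non-pants. The same argument handles $\beta$.

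For the edge subsimplex I would split on the structure of $X$. When $X=\{x\}$ is non-separating in $\Sigma$, $\hat{\Upsilon}=\Sigma$ and the conclusion is immediate. When $X=\{x\}$ is separating, $\Sigma=\Upsilon\cup_x L$, and I would show that the $x$-boundary of $\hat{\Upsilon}\setminus(\alpha\cup\beta)$ lies on $N$ whenever $N$ exists, so that $N\cup L$ is the unique non-pants (and the other components remain pants); when $N$ does not exist, the unique pants carrying the $x$-boundary merges with $L$ to produce the only non-pants (no essential $L$ can be an annulus or disk, so the merge is automatically non-pants). When $X$ is an edge $\{x,y\}$, similar but finer casework on the separability of $x,y$ in $\Sigma$, on which sides of $X$ lie in $\Upsilon$, and on which boundaries of $\Upsilon$ are paired will reduce to the same dichotomy between $N$-attachment and pants-absorption.

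The main obstacle will be ruling out the bad configuration in which $N$ exists but an unpaired $X$-boundary lies on a pants component $P_i$ of $\hat{\Upsilon}\setminus(\alpha\cup\beta)$, since that would produce a second non-pants $P_i\cup L$. I expect each instance to be ruled out by combining the individual essentially non-separating hypotheses with the trichotomy in the definition of essentially non-separating for $\alpha\cup\beta$. Under condition (2), the pants side of a pants curve has two $\Sigma$-punctures and no $X$-boundary, forcing the $X$-boundary onto the opposite side; under condition (3), the bounded genus-zero subsurface has exactly $\alpha\cup\beta$ as boundary plus a single puncture, so no $X$-boundary lies on it either; and under condition (1), $\hat{\Upsilon}\setminus(\alpha\cup\beta)$ is connected so $N$ is the only candidate to carry the $X$-boundary. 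A careful enumeration of the combinatorial types of such a $P_i$ (whose three ends must lie among $\alpha$, $\beta$, punctures of $\Sigma$, and unpaired $X$-boundaries) together with the complexity restrictions they would impose on $\hat{\Upsilon}$ should eliminate every residual bad configuration.
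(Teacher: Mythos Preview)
Your proposal is correct and will work once the ``careful enumeration'' in the last paragraph is carried out; the condition-by-condition analysis you sketch for (1), (2), (3) does indeed force every unpaired $X$-boundary of $\hat{\Upsilon}$ onto $N$ when $N$ exists, and the residual ``$N$ does not exist'' case is handled by noting $\xi(\hat{\Upsilon})=2$ and checking that the few possible $(h,m,u)$ types are either excluded by the $C_0$-edge conditions or have $L$ connected so that the attached pants merge into a single non-pants.

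That said, your route is noticeably more laborious than the paper's. The paper bypasses the $L$-attachment bookkeeping entirely by using the inclusion $\hat{\Upsilon}\subset\Sigma$: since gluing paired boundaries of $\Upsilon$ simply undoes cuts, $\hat{\Upsilon}$ sits inside $\Sigma$, and therefore ``non-separating in $\hat{\Upsilon}$'' immediately implies ``non-separating in $\Sigma$''. With this in hand the paper argues directly in $\Sigma$: a pants curve in $\Upsilon$ is a pants curve in $\Sigma$, an eventually non-separating curve is non-separating in $\Sigma$, and then the three conditions (1), (2), (3) translate directly into statements about the components of $\Sigma\setminus\{\alpha,\beta\}$ without any reference to $L$ or to the structure of $X$. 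In particular, the paper never needs your case split on whether $X$ is a vertex or an edge, nor on the separability of its constituent curves --- the argument is uniform. What your approach buys is a more explicit component-level picture of how $\Sigma\setminus\{\alpha,\beta\}$ is assembled from $\hat{\Upsilon}\setminus\{\alpha,\beta\}$ and $L$, which could be useful elsewhere, but for this proposition it is unnecessary overhead.
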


\begin{proof}
    Pick a vertex $\alpha\in C_0\Upsilon$. $\alpha$ is either a pants curve or eventually non-separating. If it is a pants curve on $\Upsilon$, it is a pants curve on $\Sigma$ and so cuts $\Sigma$ into at most one non-pants component. If it cuts $\Sigma$ into only pants components, it means $\Sigma=\Sigma_{0,4}$ or $\Sigma_{1,1}$ and so $\xi(\Sigma)=1$.
    
    If $\alpha$ is eventually non-separating, let $\hat{\Upsilon}$ be the surface of genus $h+p$ with $m$ cusps and $u$ boundary components obtained by gluing all the $p$ pairs. Since $\hat{\Upsilon}\subset \Sigma$, and $a$ is non separating on $\hat{\Upsilon}$, it is non separating on $\Sigma$ so $a$ is good in $\Sigma$. This shows that the vertex set of $C_0\Upsilon$ is contained in the vertex set of $(\Upsilon, \Sigma)$. We check the same holds true for the edges.

    Pick an edge $\{a,b\}\subset C_0\Upsilon$. Each of $a,b$ are essentially non-separating so are good. Now either $a\cup b$ is eventually non-separating, or at least one of $a$ or $b$ is a pants curve, or $a\cup b$ bounds a genus zero subsurface with no boundary components and 1 puncture. In the first case, $a\cup b$ does not separate $\hat{\Upsilon}$ so does not separate $\Sigma$. Then $\Sigma\setminus \{a,b\}$ is not a pants since $\xi(\Sigma\setminus\{a,b\})=\xi(\Sigma)-2\geq 1$. In the second case, if say $a$ is a pants curve and $b$ is a pants, then $\Sigma\setminus\{a,b\}$ consists of two pants and one other component, which cannot be a pants because if it were, $\Sigma=\Sigma_{0,5}$ and $\xi(\Sigma)=2$. If $a$ is a pants curve and $b$ does not separate $\hat{\Upsilon}$ then $\Sigma\setminus\{a,b\}$ is one pants and one non-pants. In the third case, $\Sigma\setminus \{a,b\}$ is one pants and one non-pants, so the edge is good.

\end{proof}

We will call a 3-simplex a tetrahedron, a 2-simplex a triangle, a 1-simplex an edge, a 0-simplex a vertex, and a union of two 2-simplices that intersect at exactly one edge a diamond. Our notation for neighborhoods and distances is $B_r(x)=\{y, ~d(x,y)\leq r\}$, $B_r(X)=\cup_{x\in X} B_r(x)$, $d(x,Y)=d(Y,x)=\inf_{y\in Y}d(x,y)$, $d(X,Y)=\sup_{x\in X} d(x,Y)$, and if an origin $o$ is fixed, $B_r=B_r(o)$. Moreover if an origin $o$ has been fixed, for any subsurface $V\subset \Sigma$ we write $\ssn{x}= d_V(x,o)$.

\section{Key Lemmas}\label{lemmas}

\begin{theorem}\label{ozsc} 
    Let $\xi(\Sigma)\geq 10$. Suppose $B_R$ is $M$-almost simply connected for all $B_R\subset C\Sigma$ (see \autoref{asc}). Then for any $\gamma\subset S_r$ there exists a triangulation $\Delta'\subset B_{r+M+7}\setminus B_{r-3}$ of $\gamma$.
    % (See definition ~\ref{Bosmc})
\end{theorem}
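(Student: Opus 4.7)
The plan is to first apply the $M$-almost simple connectivity hypothesis to produce some triangulated disk bounded by $\gamma$, and then modify it to push its deep portion out of $B_{r-3}$. Since $\gamma \subset S_r \subset B_r$, the hypothesis directly gives a triangulated disk $\Delta_0 \subset B_{r+M}$ with $\partial \Delta_0 = \gamma$. The remaining task is to replace the subcomplex of $\Delta_0$ meeting $B_{r-3}$ with material that avoids $B_{r-3}$; the extra $+7$ in the outer radius is the budget for this replacement.

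To analyze the deep part, let $V_d$ denote the set of vertices of $\Delta_0$ at distance at most $r-3$ from $o$, and let $N \subset \Delta_0$ be the full subcomplex on $V_d$. Because $\partial \Delta_0 = \gamma$ consists of vertices at radius $r$, $V_d$ is disjoint from $\partial \Delta_0$, so the closed star of $N$ in the triangulated disk $\Delta_0$ is a disjoint union of sub-disks. The frontier loop of each such sub-disk consists of vertices adjacent to $V_d$ but not themselves in $V_d$, which forces each such vertex to have radius at most $r-2$ (being at distance one from $B_{r-3}$) and at least $r-2$ (not being deep), hence exactly $r-2$. Each frontier loop $\delta$ therefore lies in $S_{r-2}$, and the problem reduces to: given any loop $\delta \subset S_{r-2}$, produce a triangulated disk bounded by $\delta$ lying in $B_{r+M+7}\setminus B_{r-3}$.

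For the reduced problem I would push $\delta$ outward. For each vertex $v \in \delta$ I pick some $v^* \in S_r$ with $d(v, v^*) = 2$, which gives $d(v^*, w^*) \le 5$ for consecutive $v, w \in \delta$. Stitching the $v^*$'s into a loop $\delta^* \subset S_r$ using short interpolating paths -- whose existence and shallow radial behavior come from Wright's connectivity-of-spheres result applied in the shell $B_{r+O(1)}\setminus B_{r-3}$ -- produces a triangulated annulus between $\delta$ and $\delta^*$ that avoids $B_{r-3}$. Then $\delta^* \subset B_r$ is filled by a triangulated disk in $B_{r+M}$ via the hypothesis, and assembling the annulus, the fill, and the unmodified part of $\Delta_0$ gives the desired triangulation $\Delta'$, with all radial contributions closing up within the $+7$ budget.

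The main obstacle is the push-out step: one must be able to choose the interpolating paths between nearby $v^*$'s so that the resulting collar of $\delta$ is a triangulated annulus genuinely contained in a shell of controlled thickness above $S_{r-3}$. This is where Wright's sphere connectivity must be combined with a careful radial analysis -- likely via the Bounded Geodesic Image Theorem to bound how far short geodesics between the $v^*$'s stray toward $o$ -- in order to verify that the total outer cost is at most $M+7$ and that the inner cost never dips below $r-2$.
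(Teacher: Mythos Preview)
Your outline has a genuine circularity in the refilling step. After isolating a frontier loop $\delta \subset S_{r-2}$ and pushing it out to $\delta^* \subset S_r$, you invoke the ball hypothesis to fill $\delta^*$ by a disk in $B_{r+M}$. But that disk may again have vertices in $B_{r-3}$, and you are back to precisely the problem you started with, for a loop $\delta^*$ that is in no measurable sense simpler than $\gamma$. Nothing in the argument supplies a decreasing quantity on which to induct, so the procedure does not terminate. A secondary problem is the collar: Wright's sphere connectivity gives a path in (a neighborhood of) $S_r$ between nearby $v^*, w^*$, but with no bound on its length in terms of $d(v^*,w^*)$; you then have to triangulate each resulting quadrilateral $v,w,w^*,\ldots,v^*$ inside the shell, which is again an instance of the problem you are trying to solve.

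The paper's route is structurally different and uses an ingredient beyond the stated hypothesis. After arranging $\gamma$ and an initial triangulation (from the ball hypothesis) to have \emph{good} edges, it pushes the triangulation outward one sphere $S_k$ at a time via three local replacement moves: a triangle in $S_k$ is coned over a vertex in $S_{k+1}$ (Proposition~\ref{inductionlem1}); an edge in $S_k$ sitting in a diamond is replaced by a path in $S_{k+1}$ (Proposition~\ref{inductionlem2}); and an isolated vertex $z \in S_k$ whose link lies in $S_{k+1}$ is removed by triangulating that link inside $S_{k+1}$. All three moves are driven by the Bounded Geodesic Image Theorem applied in the subsurface cut out by the simplex being pushed. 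The cone step is where the circularity is actually broken: the link of $z$ lives in $C(\Sigma\setminus z)$, a curve complex of strictly lower complexity, and to fill it while staying outside a ball one needs that \emph{complements} of balls in $C\Upsilon$ are almost simply connected. That is Theorem~\ref{simcon}, proved separately by Teichm\"uller-theoretic methods, and the paper explicitly records that the present theorem follows from Theorem~\ref{ozoz} together with Theorem~\ref{simcon}. Your proposal never passes to subsurfaces and never invokes the complement-of-balls result, which is why it cannot close.
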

This theorem follows from \autoref{ozoz} and \autoref{simcon}. We prove ~\autoref{ozoz} in this section. The idea of the proof is that, we can use simple connectivity of the curve complex to triangulate a loop and then we may push simplices in the triangulation away from the origin in three cases. The three cases are a triangle (\autoref{triangle}), a diamond (\autoref{diamonds}), and a cone (\autoref{cone}), as shown in ~\autoref{indpic}.

The main tool in showing that spheres are connected is the Bounded Geodesic Image Theorem from \cite{MMI} which we restate here.

\begin{theorem}\label{bgit}
    For every connected surface $U$, there exists a constant $M$ such that if $V$ is a subsurface of $U$, and $a,b \in CU$, and $d_V(a,b) > M$, then every geodesic from $a$ to $b$ in $CU$ contains a curve which does not cut $V $.
\end{theorem}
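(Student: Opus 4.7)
The plan is to follow Masur and Minsky's original strategy, using subsurface projections together with Teichm\"uller geodesics to convert a geodesic in $CU$ into controlled motion in $CV$. Recall that the \emph{subsurface projection} $\pi_V(\alpha)\in CV$ is defined whenever a curve $\alpha$ cuts $V$, by taking an essential arc of $\alpha\cap V$ and looking at the boundary components (in $V$) of a regular neighborhood of that arc together with $\partial V$. I would argue by contrapositive: suppose a geodesic $a=v_0,v_1,\ldots,v_n=b$ in $CU$ contains no curve disjoint from $V$, and aim to bound $d_V(a,b)$ by a constant depending only on $U$.

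The first step is the easy observation that when $v_i$ and $v_{i+1}$ are disjoint in $U$ and both cut $V$, their projections $\pi_V(v_i),\pi_V(v_{i+1})$ lie within uniformly bounded distance in $CV$. Summing naively only gives $d_V(a,b)\le Cn$, which is useless when $n$ is large, so the heart of the argument is to upgrade this local estimate to a global bound independent of $n$.

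To achieve this, I would connect points in Teichm\"uller space whose systoles realise $a$ and $b$ by a Teichm\"uller geodesic $\mathcal{G}$, and invoke the fellow-travelling result that the systoles along $\mathcal{G}$ form a quasigeodesic in $CU$ which stays within bounded Hausdorff distance of any $CU$-geodesic joining $a$ and $b$. The hypothesis that no vertex of the geodesic is disjoint from $V$ translates, up to the fellow-travelling constants, into the statement that no curve in $\partial V$ becomes short along $\mathcal{G}$. Minsky's product region theorem (or Rafi's combinatorial model) then says that avoiding the thin part associated to $\partial V$ forces the projection of $\mathcal{G}$ to the Teichm\"uller space of $V$ to travel a bounded distance, from which a uniform bound on $d_V(a,b)$ follows.

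The main obstacle is making the step "avoiding the thin part of $V$ implies bounded $CV$-projection" quantitative with a constant depending only on the topology of $U$, and independent of any choices made in passing between $CU$-geodesics and Teichm\"uller geodesics. In Masur--Minsky this is extracted from their hierarchy machinery developed alongside the statement; a cleaner alternative would be to use Webb's unicorn-path approach, where subsurface projections are directly seen to be coarsely Lipschitz along the combinatorial paths replacing geodesics, making the bound fall out of hyperbolicity of $CU$ together with the Morse property of its geodesics.
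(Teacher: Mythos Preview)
The paper does not prove this theorem. It is stated as the Bounded Geodesic Image Theorem and explicitly attributed to Masur--Minsky \cite{MMI}; the paper treats it as a black box and uses it throughout \autoref{lemmas} without giving any argument. So there is no ``paper's own proof'' to compare your sketch against.

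That said, your outline is a reasonable summary of how the result is established in the literature. The original argument does go through Teichm\"uller geodesics and the nesting/product-region machinery, though it is usually located in Masur--Minsky~II rather than~I. Your mention of Webb's unicorn-path proof is also apt: that approach bypasses Teichm\"uller theory entirely and gives a uniform constant independent of the surface, which is strictly stronger than what the paper needs. One small comment: what you wrote is more a plan than a proof---the phrase ``the main obstacle is making the step quantitative'' is exactly right, and in your sketch that step is asserted rather than carried out. For the purposes of this paper, however, a citation suffices.
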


\begin{figure}[h]
\centering
\includegraphics[width=1\linewidth]{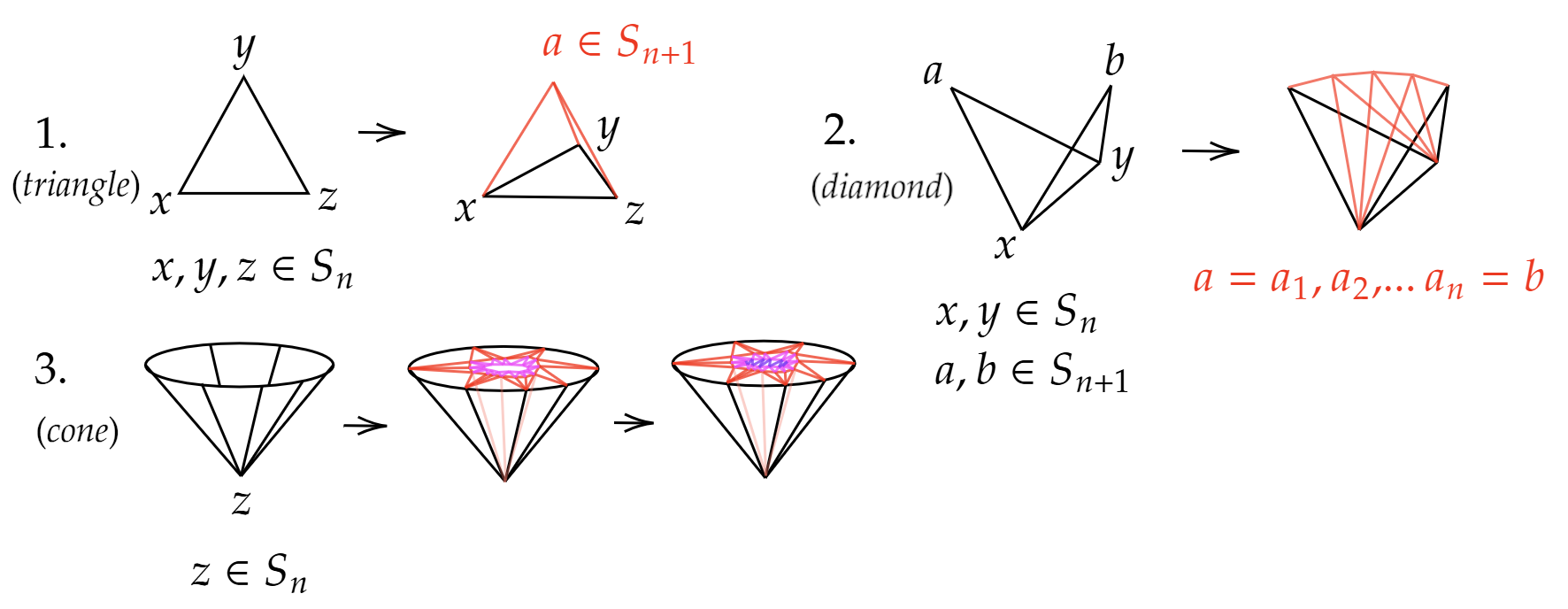}
\caption{Pushing simplices away from the origin}
\label{indpic}
\end{figure}

\subsection{Triangle}\label{triangle}

This case is one where we have three vertices $x,y,z$ which form a triangle in our triangulation. We want to introduce another vertex $w$ so that the edges $(x,y),(y,z),(x,z)$ are not affected but the triangle $(x,y,z)$ is replaced with a collection of triangles $(w,x,y),(w,x,z),(w,y,z)$.

\begin{proposition}\label{inductionlem1}
Fix $\Sigma=\Sigma_{g,n}$ with $\xi(\Sigma)\geq 5$. Fix an origin $o\in C\Sigma$ and $r\in\ZZ_{\geq 0}$. If $(x,y,z)$ is a good triangle in $S_r\cup S_{r+1}$ where $x\in S_r$, then there exists some $w\in S_{r+1}$ such that $(w,x,y,z)$ is a tetrahedron with good faces. Furthermore, the set of these $w$ is coarsely dense in $C\Upsilon$ where $\Upsilon$ is the unique component of $S\setminus(x\cup y\cup z)$ that is not a pants.
\end{proposition}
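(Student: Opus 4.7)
Let $\Upsilon$ be the unique non-pants component of $\Sigma\setminus(x\cup y\cup z)$. Since $(x,y,z)$ is good and $\xi(\Sigma)\geq 5$, additivity of complexity under cutting gives $\xi(\Upsilon) = \xi(\Sigma)-3 \geq 2$, so $C\Upsilon$ is infinite-diameter; moreover any curve $w$ disjoint from $x\cup y\cup z$ must lie in $\Upsilon$ (the other components of the complement are pants). My candidate set consists of $w \in C\Upsilon$ satisfying (1) $w$ is good as a vertex in every subsurface $V_S$, where $V_S$ denotes the unique non-pants component of $\Sigma\setminus S$ for $S \subseteq \{x,y,z\}$ (so $V_\emptyset = \Sigma$ and $V_{\{x,y,z\}} = \Upsilon$), and (2) $d_\Upsilon(w,o) > M$, where $M$ is the constant for $\Sigma$ from the Bounded Geodesic Image Theorem (\autoref{bgit}).

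Condition (1) is exactly what is needed for the tetrahedron $(w,x,y,z)$ to have good faces: for each subset $S\cup\{w\}$ of its vertex set, $\Sigma \setminus (S\cup\{w\}) = (V_S\setminus w) \sqcup (\text{pants})$, so a unique non-pants in $V_S\setminus w$ is precisely the goodness condition. I expect (1) to reduce to checking only goodness in $\Upsilon$ and in $\Sigma$, via a case analysis on whether $w$ is non-separating or a pants curve in $\Upsilon$. In the non-separating case, $\Upsilon\setminus w$ is connected, and each $V_S$ is assembled from $\Upsilon$ by attaching external pants along some $\partial\Upsilon$-components and re-identifying paired boundaries corresponding to curves in $\{x,y,z\}\setminus S$; both operations preserve connectedness and complexity $\geq 1$. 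In the pants-curve case $\Upsilon\setminus w = P \sqcup \Upsilon^\ast$, and goodness of $w$ in $\Sigma$ precisely rules out the configuration where the two non-$w$ boundary components of $P$ form a single pair of $\partial\Upsilon$---the only configuration in which some identification assembling a $V_S$ would inflate $P$ into a second non-pants piece.

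Condition (2) forces $d(w,o) = r+1$ via \autoref{bgit}. Since $w$ is disjoint from $x$, $d(w,o) \leq r+1$ automatically. Suppose $d(w,o) = k \leq r$ and let $o = v_0,\ldots,v_k = w$ be a $C\Sigma$-geodesic; by \autoref{bgit} some $v_j = \alpha$ does not cut $\Upsilon$, and since every essential curve of $\Sigma\setminus\Upsilon$ is isotopic to a component of $\partial\Upsilon$ (the remaining components being pants), $\alpha$ must be isotopic to one of $x, y, z$. Then $j = d(o,\alpha) \geq r$ and $k - j = d(\alpha,w) \leq 1$; combined with $k \leq r$, these force $j = k = r$ and $\alpha = w$, contradicting that $w$ lies in the interior of $\Upsilon$.

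For coarse density, good vertices form a coarsely dense subset of $C\Upsilon$ (any curve admits a disjoint non-separating or pants curve of $\Upsilon$, using $\xi(\Upsilon)\geq 2$), and $\{w : d_\Upsilon(w,o) > M\}$ is the complement of a bounded ball in an infinite-diameter hyperbolic graph, hence coarsely dense; the intersection remains coarsely dense. The main obstacle is the case analysis in condition (1), especially when $\Upsilon$ is planar and only pants-bounding curves qualify as good: one must track the boundary configuration of a pants $P \subset \Upsilon\setminus w$ through each of the assembly maps $\Upsilon \to V_S$, verify the unique-non-pants property in every case, and arrange the coarse-density argument so that curves with the required configuration of $\partial P$ remain abundant in this planar regime.
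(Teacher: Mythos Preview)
Your framework is correct and matches the paper's: find $w\in C\Upsilon$ making the tetrahedron good (your condition (1)), require $d_\Upsilon(w,o)>M$ (condition (2)), and apply BGIT exactly as you do to conclude $w\in S_{r+1}$. The paper handles existence constructively---building a specific $a$ by cases on the genus and punctures of $\Upsilon$, then pushing it past the $M$-ball with a pseudo-Anosov on $\Upsilon$---and obtains coarse density from the $Mod(\Upsilon)$-orbit of this single $a$, which preserves the topological type of $(a,x,y,z)$ in $\Sigma$ and hence the goodness of every face.

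The gap is your reduction of condition (1) to goodness in $\Upsilon$ and in $\Sigma$ alone. In the pants-off case you assert the only obstruction is when the two non-$w$ boundaries of $P$ form a single pair of $\partial\Upsilon$; but $P$ can also inflate into a non-pants by being glued to an \emph{external} pants component of $\Sigma\setminus\{x,y,z\}$ while its other boundary remains unglued. Concretely: take $\Sigma=\Sigma_{3,2}$, let $y$ be a pants curve around the two punctures (so there is an external pants $Q$ beyond $y$), let $x,z$ be non-separating so that $(x,y,z)$ is a good triangle with $\Upsilon$ of genus $1$ and boundary set $\{x_+,x_-,y,z_+,z_-\}$, and let $w$ cut off a pants $P\subset\Upsilon$ with $\partial P=\{w,x_+,y\}$. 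Then $w$ is good in $\Upsilon$ (cuts off a pants) and non-separating, hence good, in $\Sigma$ (since $x_+\sim x_-$ reconnects $P$ to $\Upsilon^*$). Yet in $V_{\{x\}}=\Sigma\setminus x$ the boundary $x_+$ is not re-glued while $y$ is glued to $Q$, so the component of $P$ in $V_{\{x\}}\setminus w$ is $P\cup Q$, a four-holed sphere, and the other component is also non-pants; thus $(w,x)$ is a bad edge and the tetrahedron fails to have good faces. Note $\Upsilon$ has genus here, so the difficulty is not confined to the planar regime you flagged. This also breaks your density sketch, since $1$-density of curves good in $\Upsilon$ does not give density for the full condition (1); the paper's explicit-construction-plus-orbit argument is precisely what fills this hole.
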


\begin{proof}
We first show the density statement follows from the mapping class group action. Let $M$ be the constant from \autoref{bgit} and suppose we have $w\in C\Upsilon$ such that $(w,x,y,z)$ is a tetrahedron with good faces. The orbit $G$ of $w$ under the action of $Mod(\Sigma)$ is coarsely dense in $C\Upsilon$, say it's $d$-dense. By \autoref{bgit} it follows that any point $v\in G\setminus B_M(o)$ is such that $v,x,y,z$ is a tetrahedron with good faces and $v\in S_{r+1}$. But now $G\setminus B_M(o)$ is coarsely dense.
% since for any point $y$ in $CU$, if $y$ is outside the ball $B(c,M)$ then $d_U(y,v)\leq d$ for some nice $v, d_U(v,c)>M$ as above. Now suppose $y$ is such that $d_U(y,c)\leq M$. By coarse density we have that there's some $v$ with $M\leq d_U(v,c)\leq M+d$, so then by the triangle inequality, $d_U(y,v)\leq d_U(y,c)+d_U(c,v)\leq 2M+d$. So the set $G$ is $2M+d$-dense.

In order to show existence of $w$, we will begin by finding $a$ so that $(a,x, y, z)$ form a good tetrahedron and then modifying $a$ so that it lies in $S_{r+1}$.

Suppose $x,y,z$ form a good triangle. Then $\Upsilon=\Upsilon_{h,m,b}$ the unique non-pants has complexity $\xi(\Upsilon)=\xi(\Sigma)-3\geq 2$. If $h\geq 1$, we let $a$ be non-separating curve in $C\Upsilon$. If $h=0$ then $m+b\geq 5$. If $m\geq 2$, take $a$ to be a pants curve around two punctures. If $m=1$ then $b\geq 4$ so one of $x,y,z$, say $x$, is non-separating on $\Sigma$. Then choose $a$ to be a curve that makes a pants with $x$ and the puncture. This can be done for example by connecting the puncture to a boundary formed by $x$ with an arc and taking the pants curve associated to that arc. This makes good faces since $a\in (\Upsilon,\Sigma)$, $(a,x)\subset  (\Upsilon,\Sigma)$ since they cut out a single pants, $(a,y),(a,z) \subset (\Upsilon,\Sigma)$ since $y,z$ are either pants curves or jointly non-separating with $x$, in which case they are still jointly non separating with $a$, and finally $(a,x,y),(a,x,z),(a,y,z)\subset (\Upsilon,\Sigma)$.

If $m=0$ then $b\geq 5$ and so there are at least two non-separating curves that are also jointly non-separating, say $x,y$. Then take $a$ to be the pants curve associated to an arc that connects a boundary resulting from $x$ and one from $y$.
% (If $\xi(S)<8$ there would need to be many cases to consider to find this $a$, so $8$ is perhaps not a strict lower bound here.)

If $\ssn[\Upsilon]{a} > M$, then we can take $w = a$ because \autoref{bgit} implies that any geodesic from $w$ to $o$ must miss $\Upsilon$. Since no curve other than $x,y,z$ miss $\Upsilon$, we get that $w \in S_{r+1}$.

If $\ssn[\Upsilon]{a} \leq M$, then we take $g$ to be a pseudo-anosov map on $\Upsilon$. We know such maps have positive translation distance on $C\Upsilon$. Thus we can take $w = g^k a$ where $k$ is large enough so that $\ssn[\Upsilon]{g^k a} > M$.
\end{proof}

The above requires us to work with good triangles. However when triangulating a loop in the curve complex, it may be that some of the triangles are bad. The following two statements allows us to modify the triangulation to replace bad triangles with good ones. This will be a recurring theme where we will frequently have to fix vertices, then edges and then triangles of some given triangulation to be able to apply the Bounded Geodesic Image Theorem and push it outwards.

\begin{proposition}[Replacing a bad triangle]\label{fixinglem}
Let $\Sigma = \Sigma_{g, n}$ be a surface with $\xi(\Sigma)\geq 5$.
Suppose $x \in S_r$ and $\{x,y,z\}\subset S_r\cup S_{r+1}$ form a triangle with good edges but $(x,y,z)$ is not a good triangle. Then there exists some $a\in S_r\cup S_{r+1}$ such that $(a,x,y), (a,x,z), (a,y,z)$ are good triangles in $S_r\cup S_{r+1}$. If $x=o$, then $a$ can be taken to be in $S_1$.
\end{proposition}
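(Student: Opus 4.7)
The plan is to determine the structure of $\Sigma\setminus\{x,y,z\}$ forced by the hypothesis, construct a candidate $a_0$ satisfying the good-triangle conditions inside one component, and then apply a pseudo-Anosov modification combined with the Bounded Geodesic Image Theorem to force $a$ into $S_r\cup S_{r+1}$.

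First I would establish the structural claim: $\Sigma\setminus\{x,y,z\}$ has exactly two non-pants components $A$ and $B$, each with three boundary components (one from each of $x,y,z$), and no pants components. Since $(x,y)$ is good, $\Sigma\setminus\{x,y\}$ has a unique non-pants $U_{xy}$ containing $z$ in its interior, and similarly for $U_{xz}, U_{yz}$. The failure of $(x,y,z)$ to be good combined with $\xi(\Sigma)\geq 5$ forces $z$ to separate $U_{xy}$ into two non-pants pieces (the alternative of two pants would force $\xi(\Sigma)\leq 3$); these are the two non-pants components $A,B$ of $\Sigma\setminus\{x,y,z\}$. If, say, both sides of $x$ lay in $A$, then gluing $x$ back would give $\Sigma\setminus\{y,z\}=(A\cup_x A)\cup B$ with two non-pants components, contradicting $(y,z)$ good; so each of $x,y,z$ contributes exactly one side to each of $A,B$. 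These six sides exhaust $\partial A\cup\partial B$, leaving no sides available for pants components of $\Sigma\setminus\{x,y,z\}$.

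Next I would construct $a_0\subset A$ (the $B$ case being symmetric) according to the topology of $A$: if $A$ has positive genus, take $a_0$ non-separating in $A$; if $A$ has genus zero with at least two punctures, take $a_0$ bounding a pants containing two punctures of $A$; and if $A$ is a genus-zero surface with one puncture and three boundaries, take $a_0$ to be any of the three curves cutting $A$ into two pants. In each case I would verify the good-triangle conditions by directly computing $U_{xy}\setminus a_0$, $U_{xz}\setminus a_0$, and $U_{yz}\setminus a_0$ and confirming that exactly one non-pants component arises in each.

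To place $a$ in $S_r\cup S_{r+1}$: in the special case $x=o$ (so $r=0$) the vertex $a_0\subset A$ is disjoint from and distinct from $o=x$, so $a_0\in S_1$ immediately. For $r\geq 1$, let $g$ be a pseudo-Anosov on $A$ fixing $\partial A=\{x,y,z\}$ pointwise, extended by the identity on $B$, and set $a=g^ka_0$; since $g$ fixes $x,y,z$, the good-triangle condition is preserved. For $k$ large enough that $d_A(a,o)$ exceeds the constant from \autoref{bgit}, every geodesic from $a$ to $o$ contains a curve $\alpha$ disjoint from $A$. By the structural claim $\alpha$ is either isotopic to one of $x,y,z$ (in which case $d(a,o)=d(a,\alpha)+d(\alpha,o)\geq 1+r$, and the upper bound $d(a,o)\leq d(a,x)+d(x,o)=r+1$ forces $d(a,o)=r+1$), or $\alpha$ is essential in $B$ and thus disjoint from $x,y,z$ (in which case $d(a,\alpha)=1$ while $d(\alpha,o)\geq d(x,o)-d(x,\alpha)=r-1$, giving $d(a,o)\geq r$). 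Either way $a\in S_r\cup S_{r+1}$.

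The main obstacle is the structural analysis in the first step: pinning down that the bad-triangle/good-edges hypothesis combined with $\xi(\Sigma)\geq 5$ forces the clean two-component decomposition with balanced side-distribution. Once this is in hand, the BGIT placement argument parallels that of \autoref{inductionlem1}, with the extra observation that curves in $B$ appearing on a geodesic from $a$ to $o$ are automatically disjoint from $\{x,y,z\}$, preventing $d(a,o)$ from falling below $r$.
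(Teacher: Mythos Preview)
Your approach is essentially the paper's: establish the two-component structure of $\Sigma\setminus\{x,y,z\}$, build a candidate in one component according to its topology (non-separating if positive genus, pants curve if two punctures, any curve if $\xi=1$), then push far in that subsurface and apply BGIT. The paper asserts the structural claim directly and handles the placement argument exactly as you do.

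One point you should tighten: the choice of component is not symmetric for the BGIT step. You need $\pi_A(o)$ to be nonempty for $d_A(a,o)$ to be defined, so you must take $A$ to be the component that $o$ actually cuts. The paper makes this explicit (``$o$ must cut at least one of the components $L$ or $R$, say $o$ cuts $L$''); your phrase ``the $B$ case being symmetric'' buries this. When $r\geq 1$ and none of $x,y,z$ equal $o$, such a component exists, so the fix is immediate. A smaller presentational issue: your side-counting argument (``the six sides exhaust $\partial A\cup\partial B$, leaving no sides for pants components'') is circular as written, since you use the conclusion (all sides of $x$ lie in $A\cup B$) in the step showing each of $x,y,z$ contributes one side to each. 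It is cleaner to first note that each of $x,y,z$ is non-separating (a good vertex that is a pants curve would force $(y,z)$ bad by your own complexity count), and then rule out pants components of $\Sigma\setminus\{x,y\}$ directly from the $2{+}1$ boundary split of any such pants.
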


\begin{figure}[h]
    \centering
\includegraphics[width=.5\linewidth]{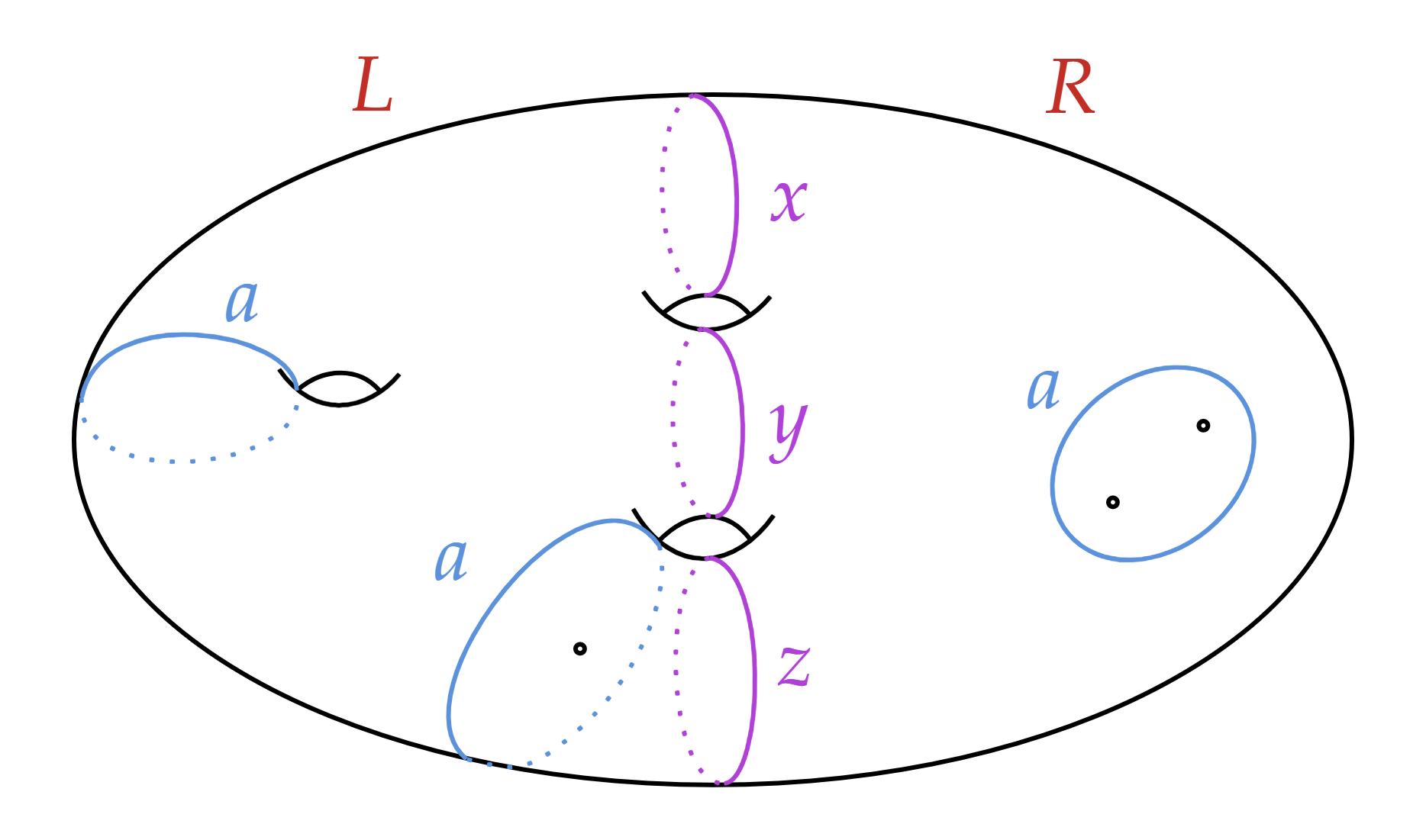}
    \caption{Picking a non-separating or pants curve on $S\setminus (x\cup y\cup z)$}
    \label{badtriple}
\end{figure}

\begin{proof}
We have a bad triangle $(x,y,z)$ with good edges in $S_r\cup S_{r+1}$, and $x\in S_r$. This means each pair in $x,y,z$ is jointly non-separating while $x\cup y\cup z$ is separating and does not cut out a pants. Up to homeomorphism, the curves look like those in \autoref{badtriple}. Let $L,R$ be the two components which are both not pants. Notice this means there is at least a genus or a puncture in the interior of $L$ and $R$.

Say $x=o$. If $g\geq 3$, then one of $L$ or $R$ has positive genus and we can pick a non-separating there. If $g=2$ then $n\geq 2$. If $n\geq 3$ then one side has at least 2 punctures so take $a$ to be a pants curve on one side. If $n=2$ then take $a$ to be any curve on one side. Then $(a,y,z),(a,x,z),(a,x,y)$ are good triangles and $d_\Sigma(a,o)=1$.

So suppose none of $x,y,z$ are the origin. Then $o$ must cut at least one of the components $L$ or $R$, say $o$ cuts $L$. If $\xi(L)= 1$, pick $a$ to be any curve in $CL$ as in the bottom curve of Figure \autoref{badtriple}. Then $(a,x,y),(a,x,z),(a,y,z)$ are good triangles. 

% any triangle $(a,x,y),(a,x,z),(a,y,z)$ will split $S$ into at most one non pants. 
Let $L=L_{h,m,b}$, so $b=3$. If $\xi(L)\geq 2$, we have $3h+m\geq 2$. If $m\geq 2$, take $a$ to be a a pants curve on $L$, 
% as in the right curve in the figure. 
If $h\geq1$, take $a$ to be a non-separating curve on $L$. In any case, the triangles that $a$ forms with $x,y,z$ will be good. 
% as in the left of the figure. Then the triangles that $a$ forms with $x,y,z$ will be good. 

By the mapping class group action on $CL$, replace $a$ by a homeomorphic curve so that $d_L(a,o)>M$. Pick any geodesic from $a$ to the origin in $C\Sigma$, say $o=a_0,a_1,...a_n=a$. By \autoref{bgit} we know there is some curve on this geodesic that misses $L$. Since $a$ doesn't miss $L$, suppose $a_i$ misses $L$. So $a_i\in\{x,y,z\}$ or $a_i\in CR$. If $a_i\in\{x,y,z\}$, $a\in S_{r+1}$. If $a_i$ is not one of $x,y,z$, then $a_i$ is disjoint from $x,y,z$ since it is in $R$. This means
$$\ssn[\Sigma]{a_i} \geq \min\{\ssn[\Sigma]{x}, \ssn[\Sigma]{y}, \ssn[\Sigma]{z}\}-1=r-1$$

Then $\ssn[\Sigma]{a}\geq \ssn[\Sigma]{a_i} + 1\geq r$. In any case, $a\in S_r\cup S_{r+1}$ and $(x,y,a), (x,z,a), (y,z,a)$ are good triangles.
\end{proof}

By combining the above propositions we get the following.

\begin{lemma}\label{replacebad1}
Let $\xi(\Sigma)\geq 5$. Suppose $(a,x,y)$ is a bad triangle with good edges, with $x\in S_r$, $y\in S_r\cup S_{r+1}$. Then either there exists $c\in S_r,d\in S_{r+1}$ such that $$(a,c,x), (c, x, y), (a,c,y), (c,x,d),(c,y,d),(d,x,y)$$ are good triangles, or there is $c\in S_{r+1}$ such that $(a,c,x),(c,x,y),(a,c,y)$ are good triangles.
\end{lemma}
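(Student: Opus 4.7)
The plan is to stack the two previous results: first use the fixing lemma (Proposition \ref{fixinglem}) to kill the bad triangle $(a,x,y)$, and then, if the new vertex lands on the wrong sphere, use the induction lemma (Proposition \ref{inductionlem1}) to produce a second vertex on $S_{r+1}$.

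More concretely, I first apply Proposition \ref{fixinglem} directly to the bad triangle $(a,x,y)$, whose edges are good and whose distinguished vertex $x$ lies on $S_r$. This produces a vertex $c \in S_r \cup S_{r+1}$ such that the three triangles $(a,c,x)$, $(c,x,y)$, and $(a,c,y)$ are all good and contained in $S_r \cup S_{r+1}$. Notice that these are exactly the three triangles appearing in the second conclusion of the lemma and also among the six triangles of the first conclusion, so in either final case they will be available ``for free.''

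Now I split on where $c$ lies. If $c \in S_{r+1}$, I am already in the second case of the statement and there is nothing further to do. If instead $c \in S_r$, I look at the good triangle $(c,x,y)$: it lies in $S_r \cup S_{r+1}$ and both $c$ and $x$ lie on $S_r$, so Proposition \ref{inductionlem1} (applied with $c$ playing the role of the distinguished $S_r$-vertex) produces a vertex $d \in S_{r+1}$ such that $(c,x,y,d)$ is a tetrahedron with good faces. The three faces through $d$ are $(c,x,d)$, $(c,y,d)$, and $(d,x,y)$; combining these with the three triangles from the first step gives precisely the six good triangles in the first conclusion of the lemma.

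I do not anticipate a serious obstacle: the substantive geometric work — producing curves in the correct good subcomplex and using the Bounded Geodesic Image Theorem (Theorem \ref{bgit}) to place them on the desired sphere — has already been carried out in Propositions \ref{fixinglem} and \ref{inductionlem1}. The only thing to verify is that the hypotheses of the induction lemma are still in force after the first step, which is immediate from the dichotomy $c \in S_r$ vs.\ $c \in S_{r+1}$ and from the fact that $(c,x,y)$ is good by construction. The bookkeeping of which triangles are good and on which sphere each vertex sits is essentially all that remains.
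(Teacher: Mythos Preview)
Your proposal is correct and follows the paper's proof essentially verbatim: apply Proposition~\ref{fixinglem} to $(a,x,y)$ to obtain $c\in S_r\cup S_{r+1}$ with $(a,c,x),(a,c,y),(c,x,y)$ good, finish if $c\in S_{r+1}$, and otherwise apply Proposition~\ref{inductionlem1} to the good triangle $(c,x,y)$ to obtain $d\in S_{r+1}$ with $(c,x,y,d)$ a tetrahedron having good faces. The only hypothesis check you leave implicit is that $a\in S_r\cup S_{r+1}$ so that Proposition~\ref{fixinglem} applies, but the paper's proof glosses over this too and it holds in every context where the lemma is invoked.
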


\begin{proof}
Suppose $(a,x,y)$ is not a good triangle but has good edges. 
% with $x\in S_r,~y\in S_r\cup S_{r+1},~ a\in S_{r+1}$. 
Then we may apply \autoref{fixinglem} on triangle $(a,x,y)$ to obtain a $c\in S_r\cup S_{r+1}$ such that $(c,a,x),(c,a,y),(c,x,y)$ are good triangles. If $c\in S_{r+1}$ then we are done.

If $c\in S_r$, apply \autoref{inductionlem1} to the triangle $(c, x, y)$ obtain a $d\in S_{r+1}$ such that $(c,x,y,d)$ is a tetrahedron with good faces, and so we are done. The process is pictured in \autoref{replacebaddiamond}.
\end{proof}

\begin{figure}[h]
\centering
\includegraphics[width=.7\linewidth]{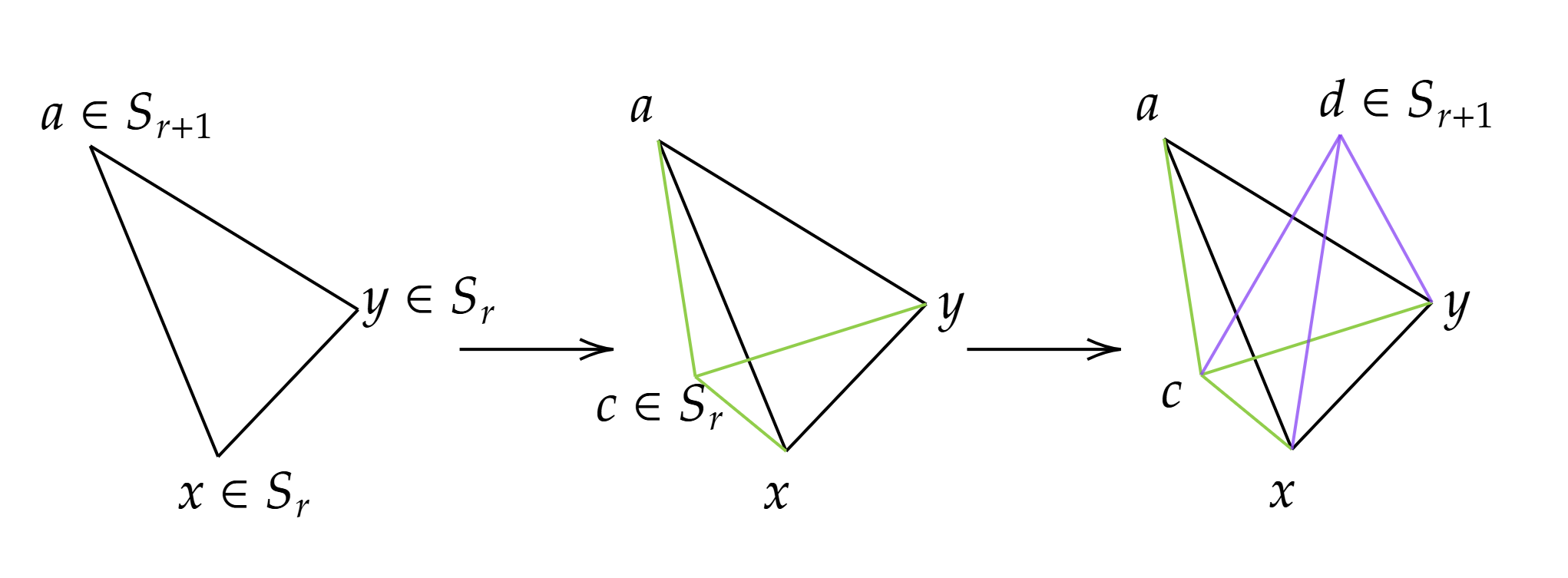}
\caption{replacing bad triangles}
\label{replacebaddiamond}
\end{figure}

\subsection{Diamond}\label{diamonds}

This section deals with pushing away diamonds. The quintessential example in this case is where $(x, y)$ is a good edge in a triangulation in $S_r$ and we have applied \autoref{inductionlem1} to the triangles containing $(x, y)$ in order to find vertices $a, b \in S_{r+1}$. Thus now $(a, x, y, b)$ form a diamond. We want to remove the edge $(x,y)$ while keeping the loop $(a,x),(a,y),(b,x),(b,y)$ and find a new triangulation for this loop.

The main proposition of this section is \autoref{inductionlem2} which will be proven using a collection of lemmas. 

\begin{proposition}[diamond]\label{inductionlem2}
Fix a surface $\Sigma$ with $\xi(\Sigma)\geq 8$. Let $a,b\in S_{r+1}, ~x\in S_r,~y\in S_r\cup S_{r+1}$, and $(a,x,y),(b,x,y)$ are good triangles. Then there exists a path $a=v_0,...v_p=b\in S_{r+1}$ such that all the $(v_i,v_{i+1},x)$, $(v_i,v_{i+1},y)$ are good triangles.
\end{proposition}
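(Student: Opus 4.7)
The plan is to work inside the subsurface $\Upsilon = \Sigma \setminus (x \cup y)$, which is the unique non-pants component cut out by $x \cup y$ because $(x,y)$ is a good edge as a face of the good triangle $(a,x,y)$. Since $\xi(\Upsilon) = \xi(\Sigma) - 2 \geq 6$, the curve complex $C\Upsilon$ is infinite-diameter, Gromov hyperbolic, and (by Wright's connectivity of spheres, the input used throughout this paper) has connected spheres at every sufficiently large radius. Both $a$ and $b$ lie in $C\Upsilon$. The guiding observation via \autoref{bgit} is that any $v \in C\Upsilon$ with $\ssn[\Upsilon]{v} > M$ lies in $S_{r+1}$: $v$ is disjoint from $x$ and $y$ so $d_\Sigma(v,o) \leq r+1$, while any geodesic from $v$ to $o$ must pass through a curve missing $\Upsilon$, necessarily isotopic to $x$ or $y$, forcing $d_\Sigma(v,o) \geq 1 + r$.

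The first step is to push the endpoints deep into $\Upsilon$. Using the mapping class group density argument from the proof of \autoref{inductionlem1}, one produces $a^\ast, b^\ast \in S_{r+1}$ with $\ssn[\Upsilon]{a^\ast}, \ssn[\Upsilon]{b^\ast} > M$ such that $(a^\ast, x, y)$ and $(b^\ast, x, y)$ are good triangles. Then iterated applications of \autoref{inductionlem1} to the good triangles $(v_i, x, y)$ produce short bridge paths $a = v_0, v_1, \ldots, v_k = a^\ast$ and $b^\ast = u_0, \ldots, u_m = b$ lying in $S_{r+1}$ whose consecutive vertices form good triangles with $x$ and with $y$; the coarse density part of \autoref{inductionlem1} lets us shrink the $C\Upsilon$-distance to the target at each step.

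The second step is to connect $a^\ast$ to $b^\ast$ by a path $a^\ast = w_0, w_1, \ldots, w_\ell = b^\ast$ lying in the region $\{v \in C\Upsilon : \ssn[\Upsilon]{v} > M\}$. This region is connected at complexity $\xi(\Upsilon) \geq 6$, by hopping between successive $\Upsilon$-spheres which are connected by Wright. The BGIT observation above then places every $w_i$ in $S_{r+1}$. To upgrade these to good triangles with $x$ and $y$, I would route the $C\Upsilon$-path through the essentially non-separating subcomplex $C_0\Upsilon$: by \autoref{vertsgood} and \autoref{translate} the relevant edges are good in $\Sigma$, and when a triangle with $x$ or $y$ remains bad but has good edges, an application of \autoref{fixinglem} (exactly as in \autoref{replacebad1}) inserts an intermediate vertex in $S_{r+1}$ making the two affected triangles good, at the cost of one extra vertex in the path.

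The principal obstacle is the simultaneous bookkeeping of all the goodness conditions — triangles with $x$, triangles with $y$, and the underlying edges — while preserving membership in $S_{r+1}$ throughout. The cleanest way around this is to introduce the auxiliary subcomplex of $C\Upsilon$ whose vertices $v$ satisfy that $(v,x,y)$ is good and whose edges $(v, v')$ satisfy that both $(v, v', x)$ and $(v, v', y)$ are good, to prove that this subcomplex is connected at $\xi(\Upsilon) \geq 6$ by combining the density input of \autoref{inductionlem1} with connectivity of $C_0\Upsilon$, and only then to apply the BGIT push and the bad-triangle fixes of \autoref{fixinglem} as separate, decoupled final steps.
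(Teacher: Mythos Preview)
Your overall architecture matches the paper's: push $a$ and $b$ deep into $C\Upsilon$ (the paper packages this as \autoref{connecttoO1}, which is exactly your iterated use of \autoref{inductionlem1}, increasing $\ssn[\Upsilon]{\,\cdot\,}$ by one at each step rather than targeting a pre-chosen $a^\ast$), connect the deep endpoints using Wright's $C_0$--connectivity while staying outside $B_M^{\Upsilon}(o)$, and then repair bad triangles at the end.

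The genuine gap is in your second step. Routing the connecting path through $C_0\Upsilon$ and invoking \autoref{translate} only guarantees that the edges $(w_i,w_{i+1})$ are good in $\Sigma$; it says nothing about the edges $(w_i,x)$ and $(w_i,y)$. A vertex $w_i$ can be essentially non-separating in $\Upsilon$ and still form a bad edge with $x$ --- e.g.\ $w_i$ non-separating on $\Sigma$ with $\{w_i,x\}$ jointly separating $\Sigma$ into two non-pants pieces. Your proposed repair, \autoref{fixinglem}, is stated only for triangles whose three edges are already good, so it cannot be invoked until those edges are secured. This is exactly the obstacle the paper isolates in \autoref{fixpath}: given a $w_i$ with $(w_i,x)$ bad, one replaces $w_i$ by a short path in $C\Upsilon$ (within distance $2$, so the BGIT bound $\ssn[\Upsilon]{\cdot}>M$ survives) all of whose vertices make good edges with both $x$ and $y$. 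The argument is an iterated case analysis on how $w_{i-1},w_{i+1},y$ are distributed across the two non-pants components of $\Sigma\setminus(w_i,x)$, and may itself recurse twice before terminating; it is the most delicate part of the whole proof, and your final ``auxiliary subcomplex'' paragraph names the difficulty without supplying this construction. Only after all edges of the tetrahedra $(w_i,w_{i+1},x,y)$ are good does the paper finish by inserting, via a direct complexity count rather than \autoref{fixinglem}, a single vertex to repair any remaining bad triangle.
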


The rough outline of the proof is to work in the curve complex $CV$ where $V$ is the unique non-pants component of $\Sigma \setminus (x, y)$. We would like to apply Wright's connectivity results of $C_0$ to find the $v_i$. The slight wrinkle involved with doing this immediately is we also wish to apply the Bounded Geodesic Image Theorem, so we need to ensure that the path in $CV$ is at least $M$ away from the origin. Our solution for this is to begin by pushing $a, b$ to the complement of a very large ball in $CV$ (this is \autoref{connecttoO1}) and connecting them in the complement of a slightly smaller ball so that we always remain at least $M$ away in $CV$ itself (this is \autoref{goodishpath}). Finally it may be that the path we find, although has good edges within itself, may form bad edges with $x$ or $y$. Thus the final step is modify the path to fix this. This is \autoref{fixpath}.

% \begin{definition}
    % Call a tuple $(a,b,x,y)$ satisfying the above a \textbf{good diamond}. 
% \end{definition}

%To prove this we will first prove some lemmas.
%These are inspired by lemma 7.2 from \cite{wright2024}. %Eventually, it may occur that a triangulation has some $a,b,x,y$ as stated above, but the triangles are only of good edges. The following lemma will reduce to the case where all the triangles are good.

\begin{lemma}\label{connecttoO1}
Let $\xi(\Sigma)\geq 5$. Let $(a,x,y)$ be a good triangle with $a\in S_{r+1}, x \in S_r ,y\in S_r\cup S_{r+1}$. Let $V$ be the unique non-pants of $\Sigma \setminus (x,y)$. Then for any $C$, there is a path $a=x_0, x_1, ...,  x_n\in S_{r+1}$ such that $\ssn{x_n} \geq C$, and for all $i,(x_i,x_{i+1},x,y)$ are tetrahedra with good edges.
\end{lemma}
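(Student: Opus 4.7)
The plan is to iterate \autoref{inductionlem1} and use \autoref{bgit} to push the projection $\ssn{x_i}$ up by exactly one at each step, producing a path that walks outward in $CV$ while remaining on the sphere $S_{r+1}$. Set $x_0 := a$ and, inductively, assume $x_0, x_1, \dots, x_i \in S_{r+1}$ have been built so that each $(x_j, x, y)$ is a good triangle, each $(x_j, x_{j+1}, x, y)$ is a tetrahedron with good faces (in particular with good edges), and $\ssn{x_j} = \ssn{a} + j$. The base case $i = 0$ uses only that $(a, x, y)$ is a good triangle by hypothesis. If $\ssn{x_i} \ge C$ we stop; otherwise we build $x_{i+1}$ as follows.

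Let $\Upsilon_i$ denote the unique non-pants component of $\Sigma \setminus (x_i, x, y)$. Since $\xi(\Sigma) \ge 5$, $\xi(\Upsilon_i) \ge 2$, so $C\Upsilon_i$ is connected of infinite diameter. Applying \autoref{inductionlem1} to the good triangle $(x_i, x, y)$, the set $G_i$ of $w \in S_{r+1}$ for which $(w, x_i, x, y)$ is a good-faced tetrahedron is $d$-dense in $C\Upsilon_i$; since only finitely many topological types of $\Upsilon_i$ can arise as $i$ varies, the constant $d$ may be taken uniform in $i$. Let $M$ be the constant from \autoref{bgit} for the ambient surface $V$. Pick any $t \in C\Upsilon_i$ with $\ssn[\Upsilon_i]{t} \ge M + d + 1$ (possible since $C\Upsilon_i$ has infinite diameter) and choose $x_{i+1} \in G_i$ within distance $d$ of $t$ in $C\Upsilon_i$, so that $\ssn[\Upsilon_i]{x_{i+1}} > M$.

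The crucial upgrade from a $\Upsilon_i$-projection estimate to a $V$-projection estimate comes from the observation that the only essential simple closed curve in $V$ disjoint from $\Upsilon_i$ is $x_i$ itself, because every other component of $V \setminus x_i$ is a pair of pants and carries no essential curves. Applying \autoref{bgit} to the pair $\Upsilon_i \subset V$, any $CV$-geodesic realizing $\ssn{x_{i+1}}$ must contain $x_i$; hence $\ssn{x_{i+1}} = \ssn{x_i} + 1$. Since $x_{i+1}$ is disjoint from $x_i$ in $V$, the pair $(x_i, x_{i+1})$ is an edge in $C\Sigma$, giving a genuine path, and $(x_i, x_{i+1}, x, y)$ is a good-faced tetrahedron by construction. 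After $n := \max(0, \lceil C - \ssn{a} \rceil)$ iterations, $\ssn{x_n} \ge C$ and we are done.

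The main obstacle is exactly this BGIT upgrade: one must verify the ``only $x_i$'' claim about curves in $V$ disjoint from $\Upsilon_i$, and confirm that both $d$ (coarse density) and $M$ (BGIT) can be taken uniformly in $i$ so that the iteration halts in a bounded number of steps. The first point reduces to the pants observation just made; the second uses that only finitely many topological types of $\Upsilon_i$ arise (for $d$), while $M$ depends only on $V$, which is fixed throughout.
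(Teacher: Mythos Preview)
Your argument is correct and follows essentially the same route as the paper: iterate \autoref{inductionlem1} on the current good triangle $(x_i,x,y)$ to produce $x_{i+1}\in S_{r+1}$ with $\ssn[\Upsilon_i]{x_{i+1}}>M$, then use \autoref{bgit} (with $\Upsilon_i\subset V$) together with the observation that $x_i$ is the unique curve in $V$ missing $\Upsilon_i$ to conclude $\ssn{x_{i+1}}=\ssn{x_i}+1$; repeat $C$ times. Your closing discussion of uniform $d$ and $M$ is harmless but unnecessary: each step increases $\ssn{\cdot}$ by exactly one, so the process terminates after at most $C$ iterations regardless of whether those constants vary with $i$.
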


\begin{proof}
Let $a,x,y$ be a good triangle and $U$ be the unique non-pants component of $\Sigma \setminus (a, x, y)$. Then, by \autoref{inductionlem1}, for any $M$ we can find a $w\in S_{r+1}$ such that $w,a,x,y$ is a good tetrahedron, with $\ssn[U]{w}>M$. Now since $U$ is a subsurface of $V$, picking $M$ large enough \autoref{bgit} implies that any geodesic from $w$ to $\pi_V(o)$ in $CV$ must miss $U$. However, the only curve that misses $U$ while staying in $V$ is $a$, so we get that $\ssn{w} \geq \ssn{a} + 1$. Since $w$ is disjoint from $a$, we actually get equality: $\ssn{w} = \ssn{a} + 1$.

But now we have that $w, a, x, y$ is a tetrahedron with good faces, and $w,x,y$ is a good triangle, so we may repeat this procedure on $w,x,y$ in place of $a,x,y$ to obtain $w'$ such that $\ssn{w'} = \ssn{w} + 1$. Iterating this $C$ times, we get a path $a = w_0, w_1, ..., w_C\in S_{r+1}$ with $\ssn{w_C} = \ssn{a} + C \geq C$, and all the $(x_i,x_{i+1},x,y)$ are tetrahedra with good faces.

\end{proof}

\begin{figure}[h]
\centering
\includegraphics[width=1\linewidth]{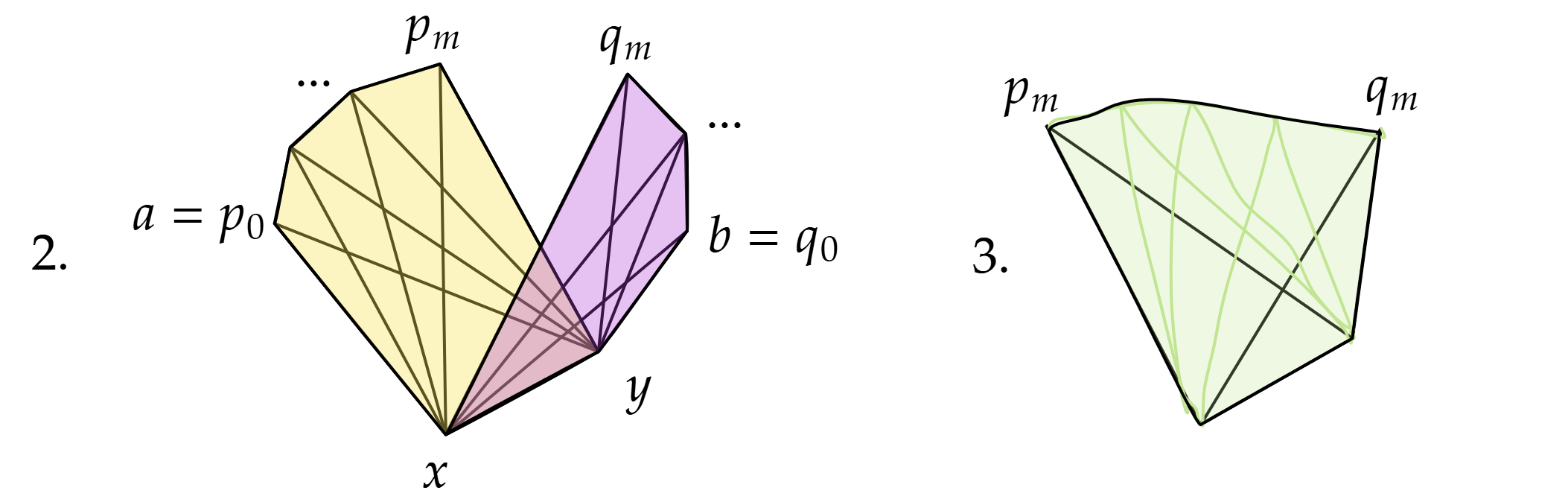}
\caption{Plan for diamond case}\label{badcone}
\end{figure}

% \begin{lemma}
%     Let $V$ be a subsurface with 2 boundaries of $\Sigma$ with $\xi(V) \geq 4$ and $a \in CV$ which may be bad. Then there exists $b$ disjoint from $a$ such that $b \in (V, \Sigma)$.
% \end{lemma}
% \begin{proof}
    
% \end{proof}

\begin{lemma}\label{findgoodedge}
    Let $V$ be a subsurface with 2 boundaries of $\Sigma$ such that $\xi(V) \geq 3$. Let $a$ be in $(V, \Sigma)$. Then there exists $a' \in (V, \Sigma)$ which form a good edge with $a$.
\end{lemma}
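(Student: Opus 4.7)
The plan is to construct $a'$ inside a carefully chosen component $W$ of $V\setminus a$ that lies inside the unique non-pants component $U$ of $\Sigma\setminus a$. Since $a\in(V,\Sigma)$, every component of $\Sigma\setminus a$ other than $U$ is a pants. Any component of $V\setminus a$ that sits inside one of these pants pieces is itself a pants (a subsurface of a pants has $\xi\le 0$), so it contributes $0$ to $\xi(V\setminus a)=\xi(V)-1\ge 2$; hence at least one component $W$ of $V\setminus a$ lies entirely inside $U$ and has $\xi(W)\ge 1$.

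Given such a $W$, I want $a'\subset W$ satisfying (i) $U\setminus a'$ has a unique non-pants, which forces $\Sigma\setminus\{a,a'\}$ to have one as well and so $\{a,a'\}$ to be a good edge, and (ii) $\Sigma\setminus a'$ has a unique non-pants, so that $a'$ is a good vertex. Both (i) and (ii) hold if $a'$ is non-separating or a pants curve in the relevant ambient surface. When $W$ has positive genus, I would take $a'$ to be a non-separating curve of $W$: then $a'$ is automatically non-separating in $U$ and in $\Sigma$, and (i) and (ii) are immediate. When $W$ is planar and has at least two punctures coming from $\Sigma$, I would take $a'$ to bound a pants around two such punctures; since punctures are preserved under the inclusions $W\subset U\subset \Sigma$, the resulting region is a pants in all three surfaces, giving (i) and (ii).

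The main obstacle will be the remaining sub-case in which $W$ is planar with at most one puncture. Then $\xi(W)\ge 1$ forces $W$ to have at least three boundary circles, each either a copy of $a$ or a piece of $\partial V$, and any pants curve in $W$ must bound off at least one such circle, whose extension into $U$ and $\Sigma$ needs to be controlled, since gluing in the attached subsurfaces across $\partial W\cap\partial V$ could in principle turn a pants in $W$ into a non-pants in $U$ or $\Sigma$. I would handle this by a finite case analysis keyed to how the two circles of $\partial V$ and the copies of $a$ distribute along $\partial W$, exploiting that $V$ has only two boundaries and $V\setminus a$ has at most two components; for each configuration I would either exhibit a pants curve of $W$ whose pants remains a pants after gluing in the adjacent pieces, or fall back on picking a curve non-separating in $U$ (hence in $\Sigma$), possibly after a mapping class group adjustment of $a'$ within $CW$ to realize the topologically required curve.
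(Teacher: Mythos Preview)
Your approach is essentially the same as the paper's: find a non-pants piece $W$ of $V\setminus a$, take $a'$ non-separating if $W$ has genus, or a pants curve around two punctures if $W$ has them. Your setup is in fact slightly more careful than the paper's, which simply declares $\bar V = V\setminus a$ to be ``the unique non-pants component'' without justification; your observation that any component of $V\setminus a$ lying on the pants side of $\Sigma\setminus a$ must itself be a pants is what actually fills that gap.

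Where you diverge is the residual planar, $\le 1$-puncture case, which you leave as an unexecuted case analysis. The paper disposes of it in one stroke: this case forces $\bar V$ to have at least four boundary circles, which in turn forces $a$ to be non-separating in $V$, so $\bar V = V\setminus a$ is connected and carries two paired copies of $a$ among its boundaries. One then takes $a'$ to be the curve associated to an arc in $\bar V$ running from one copy of $a$ to another boundary component; the resulting $a'$ is non-separating in $\Sigma$, and $\{a,a'\}$ cuts off a single pants, giving a good edge. This arc construction is the concrete move your plan is missing. Your proposed ``mapping class group adjustment of $a'$ within $CW$'' would not help here, since the mapping class group of $W$ preserves the topological position of curves relative to $\partial W$ and hence relative to $\Sigma$; it cannot turn a curve that separates $\Sigma$ badly into one that does not.
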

\begin{proof}
    Let $\bar V_{\bar h, \bar m, \bar b} = V \setminus a$ be the unique non-pants component. If $\bar V$ has a genus, then we can take $a'$ to be a non-separating curve going around the genus.
    
    So suppose $\bar V$ is a genus 0 surface. We know $\xi(\bar V) \geq 2$ so $\bar b + \bar m \geq 5$. Thus either $\bar m \geq 2$ or $\bar b \geq 4$. In the former case we can take $a'$ to be a pants curve around the pair of punctures. In the latter case we must have that $a$ is non-separating in $V$ so we can take $a'$ to be the curve represented by an arc in $\bar V$ going from one of paired boundaries to some other boundary.
\end{proof}

\begin{lemma}\label{findgoodtriangle}
    Let $V$ be a surface with 2 boundaries such that $\xi(V) \geq 6$. Let $a, b$ be a good edge in $CV$. Then there exists $c \in CV$ making $(a, b, c)$ a good triangle.
\end{lemma}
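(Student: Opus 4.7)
The plan is to select $c$ from the curve complex of the unique non-pants component $U$ of $V \setminus \{a, b\}$. Such a $U$ exists because $\{a, b\}$ is a good edge, and since the components of $V \setminus \{a, b\}$ have total complexity $\xi(V) - 2 \geq 4$ while every component other than $U$ is a pants, we have $\xi(U) \geq 4$. Any $c \in CU$ is automatically disjoint from $a$ and $b$, so the components of $V \setminus \{a, b, c\}$ are precisely the pants components of $V \setminus \{a, b\}$ together with the components of $U \setminus c$; in particular, $\{a, b, c\}$ is good in $V$ if and only if $c$ is a good vertex of $U$. The remaining task is then to choose such a $c$ that also makes $\{c\}$, $\{a, c\}$, and $\{b, c\}$ good in $V$.

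I would split on the topology of $U = U_{h', m', b''}$. If $h' \geq 1$, take $c$ to be a non-separating curve in $U$. Each of the surfaces $V$, $V \setminus a$, and $V \setminus b$ is obtained from $U$ by regluing some subset of $\{a, b\}$ together with the pants components of $V \setminus \{a, b\}$, along boundaries disjoint from $c$; each such gluing either attaches a connected piece to a connected surface or identifies two boundaries of a connected surface, and neither operation disconnects. Hence $c$ remains non-separating in all three, and complexity bounds give the required unique non-pants component in each case. If $h' = 0$ and $m' \geq 2$, take $c$ to bound a pants in $U$ around two of its punctures. Since these are punctures of $V$, the curve $c$ is a pants curve in $V$ as well, and the pants region is disjoint from $a$ and $b$ so it persists as a pants component of $V \setminus \{a, c\}$, $V \setminus \{b, c\}$, and $V \setminus \{a, b, c\}$; the goodness of all four simplices follows from $\xi$-bounds on the remaining components.

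The remaining case $h' = 0$, $m' \leq 1$ is the main obstacle. Counting the six boundary sources of $U$ (the two of $\partial V$, the two sides of $a$, and the two sides of $b$), each absorbed three-at-a-time by any pants component of $V \setminus \{a, b\}$, together with $\xi(U) = m' + b'' - 3 \geq 4$ and $m' \leq 1$, forces $b'' = 6$, $m' = 1$, and $V \setminus \{a, b\} = U$. In particular, both sides of $a$ and both sides of $b$ appear as boundary components of $U$. I would then take $c$ to bound a pants $P_c$ in $U$ whose other two boundaries are one side of $a$ and one side of $b$, so that $U \setminus c$ consists of $P_c$ and a non-pants remainder $U'$. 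A direct gluing check shows $c$ is non-separating in each of $V$, $V \setminus a$, and $V \setminus b$: regluing along either $a$ or $b$ merges $P_c$ and $U'$ into one component, since each of $a, b$ has one side on $P_c$ and the other on $U'$. Complexity counts then complete the verification. The main subtlety here is that $c$ is neither non-separating in $U$ nor a pants curve in $V$, so the goodness of $\{c\}$, $\{a, c\}$, $\{b, c\}$ in $V$ is not immediate from the local picture in $U$ and requires this explicit gluing analysis.
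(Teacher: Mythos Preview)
Your proof is correct, but it is organized differently from the paper's. The paper cases directly on the topology of $V$: writing $V=V_{\tilde h,\tilde m,2}$ with $3\tilde h+\tilde m\geq 7$, it observes that if $\tilde h\geq 3$ (or $\tilde h\in\{1,2\}$ with enough of $a,b$ being pants curves) there is leftover genus disjoint from $a,b$, and otherwise there are at least two leftover punctures; in either situation $c$ is taken to be the obvious non-separating or pants curve. Your argument instead passes immediately to the unique non-pants $U$ of $V\setminus\{a,b\}$ and cases on $(h',m')$ there, which has the advantage that $c$ is chosen in the subsurface where it must live and the goodness of $\{a,b,c\}$ is automatic. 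The tradeoff is your residual case $h'=0$, $m'\le 1$: you correctly pin this down to the single configuration $U=U_{0,1,6}=V\setminus\{a,b\}$ with $\xi(V)=6$, and your gluing check for the curve $c$ bounding a pants with one side of $a$ and one side of $b$ is valid (indeed $c$ is non-separating in $V$, $V\setminus a$, and $V\setminus b$ exactly because each of $a,b$ has one side on $P_c$ and the other on $U'$). The paper never isolates this configuration explicitly because its genus/puncture count on $V$ absorbs it into the ``leftover punctures'' branch (here $\tilde h\le 2$ and both $a,b$ non-separating force $\tilde m\ge 1$ plus the genus created in $\hat V$, and the paper would take $c$ as a pants curve using the puncture and the boundary data). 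Your route is a bit longer but more self-contained, and the gluing verification in your Case~3 is a nice explicit computation that the paper leaves to the reader.
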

\begin{proof}
    Write $V = V_{\tilde h, \tilde m, \tilde b}$. Since $\tilde b = 2$ by assumption and $\xi(V) \geq 6$, we must have $3 \tilde h + \tilde m \geq 8$. If $\tilde h = 0$, then $a$ and $b$ must be pants curves and hence encircle 2 punctures each. We can therefore take $c$ to encircle a pair of remaining punctures. If $\tilde h > 2$, then there must be a subsurface with genus which is disjoint from $a$ and $b$ so we can take $c$ to be a non-separating curve on this subsurface. The same argument works in the case of $\tilde h = 1$ when $a$ and $b$ are both pants curves and the case of $\tilde h = 2$ when at least one of them is a pants curve. If these conditions do not hold, then that means there are punctures remaining so we can take $c$ to be a pants curve instead.
\end{proof}

\begin{lemma}\label{Ozisconnected1}
Fix $\xi(\Sigma)\geq 10$. Let $C$ be very large, $M>0$, and $V$ be the unique non-pants of $\Sigma \setminus (x,y)$.
If $x\in S_r,~y\in S_r\cup S_{r+1},~p_a,p_b\in S_{r+1}\subset C\Sigma$ with $p_a, p_b,x,y$ a good diamond and $\ssn{p_a}, \ssn{p_b} > M + C$ then there is a path $\{p_i\}$ in $CV$ with $\ssn{p_i} > M$ connecting $p_a$ to $p_b$ such that $p_i,p_{i+1},x,y$ are tetrahedra with good edges. 
\end{lemma}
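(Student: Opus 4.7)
The strategy is to work inside the essentially non-separating subgraph $C_0 V$, whose complexity $\xi(V) = \xi(\Sigma) - 2 \geq 8$ is high enough to apply Wright's connectivity machinery. Since $\ssn{p_a}$ and $\ssn{p_b}$ both exceed $M + C$, the aim is to find a path from $p_a$ to $p_b$ inside $C_0 V$ that avoids the ball $B_M(\pi_V(o))$, and then to verify the good-edge condition using \autoref{vertsgood} and \autoref{translate}.

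I would first check that $p_a, p_b$ actually lie in $C_0 V$: since $(p_a, x, y)$ is a good triangle, $\Sigma \setminus (p_a, x, y)$ has a unique non-pants component, which forces $V \setminus p_a$ to have a unique non-pants component, so $p_a$ is essentially non-separating in $V$; the same argument gives $p_b \in C_0 V$. Next, invoking Wright's sphere-connectivity result applied to $V$, complements of balls in $C_0 V$ are connected, so I can choose a path $p_a = q_0, q_1, \ldots, q_n = p_b$ inside $C_0 V$ with $\ssn{q_i} > M$ for every $i$; rename these as $p_i$. For each consecutive pair, $(p_i, p_{i+1})$ is an edge of $C_0 V$, hence a good edge in $\Sigma$ by \autoref{translate}. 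The edge $(x, y)$ is good because $V$ is the unique non-pants of $\Sigma \setminus (x, y)$. For the cross-edges $(p_i, x), (p_i, y), (p_{i+1}, x), (p_{i+1}, y)$: since each $p_i$ is essentially non-separating in $V$, reattaching the pants components of $\Sigma \setminus V$ along $x$ or $y$ does not introduce extra non-pants components, so these edges are also good. This gives the required tetrahedron with good edges.

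The main obstacle I anticipate is applying Wright's sphere-connectivity result to $C_0 V$ when $V$ is a cut subsurface carrying paired and unpaired boundaries from $x$ and $y$, rather than a closed punctured surface. The bound $\xi(V) \geq 8$ should be sufficient, but transferring Wright's arguments to this boundary setting requires care. A secondary subtlety is the case analysis verifying goodness of the cross-edges $(p_i, x)$ and $(p_i, y)$, which requires tracking how the pants components of $\Sigma \setminus V$ interact with the essentially non-separating structure of $p_i$ in $V$. Once these are handled, the bookkeeping on path length and the bound $\ssn{p_i} > M$ follows directly from the ball-complement connectivity.
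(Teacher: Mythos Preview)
Your overall shape matches the paper's: work in $C_0V$, use Wright's ball-complement connectivity to get a path far from $\pi_V(o)$, and invoke \autoref{translate} to get goodness of the consecutive edges $(p_i,p_{i+1})$. That part is fine, and $p_a,p_b\in C_0V$ is justified by \autoref{vertsgood} (your stated reason---that $V\setminus p_a$ has a unique non-pants---is the definition of $p_a$ being good in $V$, not of being essentially non-separating; these are related but not interchangeable, so cite \autoref{vertsgood} instead).

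The genuine gap is your treatment of the cross-edges $(p_i,x)$ and $(p_i,y)$. You assert that because $p_i$ is essentially non-separating in $V$, reattaching the pants pieces of $\Sigma\setminus V$ cannot create a second non-pants component in $\Sigma\setminus(p_i,x)$. This is false. Take $x,y$ both non-separating and jointly non-separating in $\Sigma$; then $V$ has two pairs of paired boundaries and $\hat V=\Sigma$, so \emph{every} non-separating curve on $\Sigma$ lying in $V$ is eventually non-separating and hence lies in $C_0V$. But such a curve $p_i$ can certainly be chosen so that $p_i\cup x$ separates $\Sigma$ into two non-pants pieces, making $(p_i,x)$ a bad edge. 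Nothing in the $C_0V$ structure prevents this.

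This is exactly the obstruction the paper isolates in \autoref{fixpath}: when some $p_i$ along the $C_0V$-path forms a bad edge with $x$ or $y$, one must replace $p_i$ by a short detour $\{q_j\}$ (staying within distance $2$ in $CV$, hence still outside $B_M$) chosen so that each $q_j$ makes good edges with both $x$ and $y$. The construction of the $q_j$ is a nontrivial case analysis on how $p_{i-1},p_{i+1},y$ are distributed across the two non-pants components of $\Sigma\setminus(p_i,x)$, and in the worst case requires iterating the argument twice more and using the full strength of $\xi(\Sigma)\ge 10$. So what you flagged as a ``secondary subtlety requiring case analysis'' is in fact the main content of the lemma, and it is not a verification but a repair procedure.
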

\begin{proof}
% We will begin by using \cite{wright2024} to get a path which has good vertices and edges. This is the content of the first sublemma. Thus we ensure that $\{p_i, p_{i+1}\}$ form a good edge. But it remains possible that $\{p_i, x\}$ or $\{p_i, y\}$ form bad edges. The subsequent sublemma allows us to fix this. Thus now we have a path $\{p_i\}$ connecting $p_a$ to $p_b$ such that each $\{p_i, p_{i+1}, x, y\}$ is a tetrahedra with good edges.

We begin by using \autoref{goodishpath} to find a path $\{p_i\}$ in $C_0V$ connecting $p_a$ and $p_b$. If all of these curves 
make good edges with $x$ and $y$, then we are done. If there is some $p_i$ which makes bad edges with $x$ or $y$, we can use \autoref{fixpath} to replace $p_i$ with the path $\{q_j\}$ connecting $p_{i-1}$ to $p_{i+1}$. Since each $q_j$ is at most 2 away from $\{p_{i-1}, p_i, p_{i+1}\}$, we have $\ssn{q_j} > M$. Hence by relabeling the curves, we get a path $\{p_i\}$ connecting $p_a$ and $p_b$ with $\ssn{p_i} > M$ for all $i$.

\end{proof}

\begin{sublemma} \label{goodishpath}
Given the above setup, there exists a path $\{p_i\}$ in $CV$ with $\abs{p_i}_V > M$ connecting $p_a$ to $p_b$ such that each $(p_i,p_{i+1})$ is a good edge.
\end{sublemma}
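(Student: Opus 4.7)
The plan is to apply Wright's connectivity result for the essentially non-separating graph $C_0V$, invoke \autoref{translate} to upgrade edges of $C_0V$ to good edges in $(V,\Sigma)$, and then use a pseudo-Anosov on $V$ to push the interior vertices of the resulting path away from the origin.

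First I would observe that both $p_a$ and $p_b$ lie in $C_0V$. Since $(p_a, x, y)$ and $(p_b, x, y)$ are good triangles, $p_a$ and $p_b$ are good vertices in $(V,\Sigma)$, and since $V=\Sigma\setminus(x,y)$ is the complement of an edge with $\xi(\Sigma)\geq 3$, \autoref{vertsgood} yields $p_a, p_b\in C_0V$. Now $\xi(V)=\xi(\Sigma)-2\geq 8$, which is large enough for Wright's theorem to apply, so there is a path $p_a=q_0,q_1,\ldots,q_n=p_b$ inside $C_0V$. By \autoref{translate}, each consecutive pair $(q_i,q_{i+1})$ is a good edge in $(V,\Sigma)$.

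The path so produced may however contain interior vertices with $\ssn{q_i}\leq M$, which is forbidden in the conclusion. To fix this, I would fix a pseudo-Anosov $g\in\mathrm{Mod}(V)$, which acts on $CV$ with positive translation length. For each bad vertex $q_i$, I would replace it with $g^k q_i$ for $k$ large enough that $\ssn{g^k q_i}>M$; because $g$ is a homeomorphism of $V$, $g^k q_i$ remains essentially non-separating and lies in $C_0V$. I would then reconnect $q_{i-1}$ to $g^k q_i$ and $g^k q_i$ to $q_{i+1}$ by applying Wright's connectivity of $C_0V$ once more, producing a longer subpath whose edges are still good in $(V,\Sigma)$ by \autoref{translate}.

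The main obstacle is ensuring that the reconnecting subpaths themselves stay outside $B_M(o)\subset CV$, so that the procedure actually terminates with a path wholly outside $B_M(o)$. This is exactly what the buffer in the hypothesis $\ssn{p_a}, \ssn{p_b}>M+C$ with $C$ very large is designed to absorb. The cleanest route is to prove directly that $C_0V\setminus B_M(o)$ is coarsely connected, by adapting Wright's proof of sphere connectivity in $C\Sigma$ to the subsurface $V$; hyperbolicity of $CV$ together with the positive translation length of $g$ then guarantees that, once $C$ is chosen larger than the relevant hyperbolicity and translation constants, all the required detours fit outside $B_M(o)$ and yield the desired path $\{p_i\}$.
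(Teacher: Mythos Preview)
Your pseudo-Anosov step does not work as written and the final paragraph papers over a real gap. Pushing a single offending vertex $q_i$ to $g^k q_i$ and then ``reconnecting'' by invoking $C_0V$-connectivity again produces a new subpath whose interior vertices you have no control over; they may well dip back inside $B_M(o)$, and iterating only creates more such vertices. The large buffer $C$ in the hypothesis $\ssn{p_a},\ssn{p_b}>M+C$ has nothing to do with the translation length of $g$, so invoking it here is not a justification. Your last sentence, ``prove directly that $C_0V\setminus B_M(o)$ is coarsely connected,'' is precisely the content of the sublemma, not a strategy for proving it.

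The paper's argument avoids this circularity by working in two layers. First it uses Wright's sphere-connectivity for $CV$ (valid since $\xi(V)\geq 2$) to produce a path in $CV$ from $p_a$ to $p_b$ with \emph{every} vertex at distance at least $M+C$ from $o$; this is where the buffer $C$ is actually spent. That path may leave $C_0V$, so each vertex is replaced by an adjacent $C_0V$ curve (Wright's Lemma~3.10), and then consecutive $C_0V$ curves are joined using Wright's Lemmas~3.12--3.15. The crucial point you are missing is that those lemmas are quantitative: the $C_0V$-path they produce stays within a uniform constant of the $CV$-geodesic between the endpoints. Taking $C$ larger than that constant guarantees the final $C_0V$-path remains outside $B_M(o)$, and \autoref{translate} then gives good edges. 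So the right fix is not a pseudo-Anosov, but to first secure the distance constraint in $CV$ and only afterwards upgrade to $C_0V$ using the bounded-distance form of Wright's $C_0$-connectivity.
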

\begin{proof}
Since $\xi(V)\geq2$ we have a path $p_a=p_1,...p_l=p_b\in CV$ where all the $p_i$ have $|p_i|_V\geq M+C$. This is by \cite{wright2024} which says that unions of spheres in the curve complex are connected for surfaces of complexity at least 2. It is possible that some of these $p_i$ are not good on $\Sigma$. We fix this by finding neighboring curves to these which will be good on $\Sigma$.

Let $p_i$ be not a good curve on $\Sigma$. We have $\xi(\Sigma)\geq 4$.
Let $V=V_{h,m,p,u}$. Since the edge $x,y$ is good, $\xi(V)=\xi(\Sigma)-2\geq 2$. This complexity bound and the fact that $p + u = 2$ (since $V$ is in the complement of two curves) implies that at least one of the conditions of Lemma 3.10 of \cite{wright2024} hold. So every $p_i$ is either in $C_0V$ so good in $\Sigma$ or adjacent to a curve in $C_0V$. Note that $p_a,p_b$ are in $C_0$ by \autoref{vertsgood}.

% If $h\geq 1$, lemma 3.10 from \cite{wright2024} holds and there is some $p_i'$ disjoint to $p_i$ which is good on $\Sigma$.  
% If $h=0$ then we have $\xi(V)=3h-3+m+2p+u=m+p-1\geq 2$ so $m+p\geq 3$. If $m\geq 3$, \cite{wright2024} 3.10 holds. If $m\leq 2$ then either $p=2$ or $p=u=1$ since $p+u=2$. If $p=2$ then $m\geq 1$ and \cite{wright2024} 3.10 holds. 
% If $p=1$ then $m\geq 2,u=1$ and \cite{wright2024} 3.10 holds (there is a minor typographical error in Lemma 3.10 of \cite{wright2024}. Condition (3) should have $m \geq 2$ and $u \geq 0$. The argument outlined in the proof clearly holds in this case). 
% So each $p_i$ is either equal or adjacent to some $p_i'\in CV$ that is good in $\Sigma$.

Now we wish to connect these curves in $C_0V$ using the connectivity of $C_0$. Lemmas 3.12, 3.13 and 3.15 in \cite{wright2024} are a collection of related lemmas telling us when the $C_0$ complex of subsurfaces are connected, depending on the genus, punctures and boundaries in the subsurface. Below we check that the conditions for these lemmas are met.

If $h\geq 2$, \cite[Lemma 3.12]{wright2024} holds to connect these $p_i'$ with paths that stay within a constant of a $CV$-geodesic between them.  
Suppose $h = 1$. We have $\xi(\Sigma)\geq 6$ so $\xi(V)=\xi(\Sigma)-2\geq 4$. If $p=0$ then this means $b = 2$, so then
$$\xi(V)=3h-3+m+b = m+2\geq 4$$

So $m\geq 2$ and \cite[Lemma 3.13]{wright2024} holds, and if $p = 1 = u$ then $m + 2p + u \geq 3$, and if $p=2$ then $m+2p+u\geq 3$ so in any case, \cite[Lemma 3.13]{wright2024} holds. Finally, suppose $h=0$. We have $\xi(\Sigma)\geq 6$. If $p=2$ we have $b=4$, and 
$$\xi(V)=3h-3+m+2p+u=m+p-1\geq 4$$
so $m+p\geq 5$, so \cite[Lemma 3.15]{wright2024} holds. In all of these cases, we get that $C_0 V$ is connected and that for any points in $C_0 V$ for these cases, there exists a $C_0 V$-path connecting them within a bounded distance of a $CV$-geodesic between the points.

%gives a path in $C_0V$ from $p_i'$ to $p_{i+1}'$ that has bounded diameter in $CV$.
Thus assuming $C$ is big enough (in particular bigger than the constants in Lemmas 3.12, 3.13 and 3.15 of \cite{wright2024}), concatenating these paths gives a path where each vertex is of distance at least $M$ from $o$ in $CV$. So we have found a path $\{p_i'\}\subset C_0V\setminus B_V(M,o)$ that connects $p_a,p_b$ where all the edges $p_i',p_{i+1}'$ are good in $\Sigma$ by \autoref{translate}. 
\end{proof}

% We have $\xi(\Sigma)\geq 9$, so either $\xi(L)\geq 3$ or $\xi(R)\geq 5$, since $\xi(\Sigma)=\xi(L)+\xi(R)+2$.

\begin{figure}[h]
    \centering
    \includegraphics[width=0.3\linewidth]{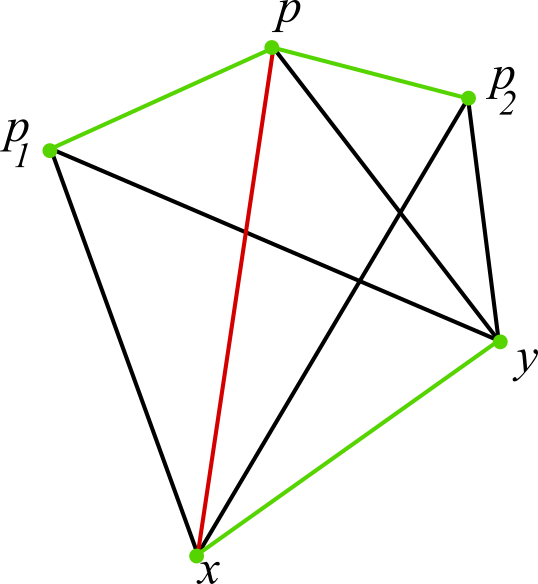}
    \caption{Bad edges will fixed by replacing $p$ with a path making good edges}
    \label{fig:fixpathdiamond}
\end{figure}

\begin{sublemma} \label{fixpath}
    Suppose we have a surface $\Sigma$ and $\{x, y, p_{1}, p_{2}\}$ form a diamond. Suppose there is some $p \in C\Sigma$ disjoint to all of them which makes good edges with $p_{1}$ and $p_{2}$ but bad edges with $x$ or $y$ (see \autoref{fig:fixpathdiamond}). Then there exists a path $\{q_j\}$ connecting $p_{1}$ to $p_{2}$ with $\{q_j, q_{j+1}, x, y\}$ forming tetrahedra with good edges. Moreover each $q_j$ is at most distance 2 from the from the path $\{p_1, p, p_2\}$ in $CV$, where $V$ is (the possibly disconnected surface) $\Sigma \setminus (x, y)$.
\end{sublemma}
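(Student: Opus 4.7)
The plan is to produce a single intermediate curve $q$, yielding the length-two path $p_1, q, p_2$. Let $V$ denote the unique non-pants component of $\Sigma \setminus (x,y)$, so $p_1, p, p_2$ are all curves on $V$ with $p$ disjoint from both $p_1$ and $p_2$. We will find a curve $q \in C_0V$ disjoint from $p_1 \cup p \cup p_2$; because $q$ is then at $CV$-distance one from $p$, it lies within distance two of every vertex of the path $\{p_1, p, p_2\}$, which gives the quantitative claim.

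Granting the existence of such a $q$, the four required good edges in $\Sigma$ follow from the earlier lemmas. Since $(x,y,p_1)$ and $(x,y,p_2)$ are good triangles, each of $p_1, p_2$ is in particular a good vertex, hence lies in $C_0V$ by \autoref{vertsgood}. With $q \in C_0V$ disjoint from $p_1$ and $p_2$, the edges $(q,p_1)$ and $(q,p_2)$ are edges of $C_0V$, and \autoref{translate} promotes them to good edges in $\Sigma$. For the edges $(q,x)$ and $(q,y)$, the essentially non-separating property of $q$ in $\hat V$, combined with the fact that $V$ is the unique non-pants of $\Sigma \setminus (x,y)$, guarantees that $\Sigma \setminus \{q,x\}$ and $\Sigma \setminus \{q,y\}$ each have a unique non-pants component.

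The main task, and the principal obstacle, is producing such a $q$. The key observation is that although $p_1, p_2$ may intersect one another, the curve $p$ is disjoint from each, so $p_1 \cup p \cup p_2$ does not fill $V$; in particular, the component $W$ of $V \setminus (p_1 \cup p_2)$ containing $p$ cannot be a pair of pants. Because $\xi(V) = \xi(\Sigma) - 2 \geq 8$, the subsurface $W$ retains enough complexity that, after further cutting by $p$, we can locate an essentially non-separating curve $q$ in the complement via a case analysis on the genus and punctures of the pieces, in the spirit of \autoref{findgoodedge} and \autoref{inductionlem1}. The delicate part of this step will be verifying that $q$ is essentially non-separating not merely in the piece where it is constructed but in all of $\hat V$; the generous complexity bound $\xi(\Sigma) \geq 10$ is precisely what secures this guarantee.
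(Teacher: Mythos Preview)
Your proposal has two genuine gaps.

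First, the claim that $q \in C_0V$ forces $(q,x)$ and $(q,y)$ to be good edges in $\Sigma$ is exactly what fails for $p$ itself. The curve $p$ comes from the $C_0V$-path produced in \autoref{goodishpath}, so $p$ is already essentially non-separating in $\hat V$; yet by hypothesis $(p,x)$ or $(p,y)$ is bad. \autoref{translate} only promotes \emph{edges of $C_0V$} to good edges in $\Sigma$; it says nothing about the edge between a $C_0V$-vertex and a boundary curve of $V$. Concretely, when $x,y$ are non-separating and jointly non-separating one has $\hat V \cong \Sigma$, so ``non-separating in $\hat V$'' reduces to ``non-separating in $\Sigma$'', which does not prevent $q \cup x$ from separating $\Sigma$ into two non-pants pieces. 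So the mechanism you invoke for the edges $(q,x),(q,y)$ does not work.

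Second, a single intermediate $q$ need not exist. You look for $q$ in $W \setminus p$, where $W$ is the component of $V \setminus (p_1 \cup p_2)$ containing $p$. Nothing prevents $\xi(W)=1$: the curves $p_1,p_2$ may intersect and fill $V$ outside a complexity-one neighborhood of $p$, regardless of how large $\xi(\Sigma)$ is, since the complexity bound constrains $V$ but not the complement of two intersecting curves in $V$. In that situation $W\setminus p$ is a union of pants and there is no $q$ at all.

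The paper's proof addresses both issues by working relative to the bad edge: it cuts $\Sigma$ along $(p,x)$ into two non-pants components $L,R$ and does a case analysis on the location of $p_1,p_2,y$. In the easy cases a single replacement curve exists. In the hard case ($p_1,p_2$ on the large side $L$, $y$ on the small side $R$) a single curve is not available and one instead takes a $C_0L$-path from $p_1$ to $p_2$ via \cite{wright2024}, then repairs any new bad edges with $x$ by repeating the cut-and-locate argument, up to three nested levels; the nesting terminates because each level strictly increases the complexity of the side containing $y$. This iterative path construction, not a single $q$, is what the sublemma actually requires.
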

\begin{proof}    
    Suppose $\{p, x\}$ is the bad edge. Let $L, R$ be the two non-pants components of $\Sigma \setminus (p, x)$. Consider how the curves $p_1, p_2$ and $y$ distributed across these components. If they are all in one component, say $R$, then since $L$ is not a pants, it has a genus or at least two punctures. Thus in this case we could take the path $\{q_j\}$ to be this single curve.

    Suppose instead we had $y, p_2 \in CR$ and $p_1 \in CL$. If $\xi(L) \geq 3$, then we can use \autoref{findgoodedge} to find a good, new $q$ in $L$ disjoint from $p_{1}$. If $\xi(L) < 3$ then we must have $\xi(R) \geq 6$ so we can find a suitable $q$ in $CR$ via \autoref{findgoodtriangle}. Once again in this case, we can take the path $\{q_j\}$ to consist of the single curve $q$.

    Finally we have the case where $y \in CR$ and $p_1, p_2 \in CL$. If $R$ has enough complexity, in particular if $\xi(R) \geq 3$, then we can use \autoref{findgoodedge} to get a suitable curve as we have done above. Thus we are left with the case $y \in CR$ and $p_1, p_2 \in CL$ where $\xi(R) \leq 2$. In particular there is not enough complexity in $R$ so we will have to find suitable curves in $L$. Since it's possible that $p_1, p_2$ fill $L$, this is where we may need to find a sequence of good curves as opposed to a single good curve.

    First we need to ensure that $L$ has enough complexity to allow us to find a path of good curves. This is easy to see: we know that $\xi(\Sigma) \geq 10$ and $\xi(R) \leq 2$. Since $\xi(\Sigma) = \xi(L) + \xi(R) + 2$, we conclude that $\xi(L) \geq 6$.

    As we've done previously, we will use \cite{wright2024} to find a path of good curves with good edges (of course edges with other curves, such as $x$, may be bad but we will iteratively fix this). Since $\xi(L) \geq 6$, one of Lemmas 3.12, 3.13 and 3.15 holds, which gives a path $\{q_j\}$ of curves in $CL$ connecting $p_1$ and $p_2$. If all $\{q_j\}$ form good edges with $x$ then we are done (notice $y$ is necessarily a pants curve in this case so all non-separating curves form good edges with $y$). 
    
    So suppose there is some $\{q_j\}$ which forms a bad edge with $x$. In other words, $q_j$ is non-separating on $\Sigma$ but jointly separating with $x$. Look at $\Sigma \setminus(q_j, x)$ which has 2 non-pants components, say $L', R'$. Notice that $p, y$ must lie in the same component, say $R'$ (see \autoref{diamondsurface}).

    As we did previously, consider how $q_{j-1}$ and $q_{j+1}$ are distributed across $L', R'$. If they are both also in $R'$ then since $L'$ is not a pants, we can find a good curve in $L'$. This curve necessarily forms good edges with $q_{j-1}, q_{j+1}, x, y$ and so we can replace $q_j$ with this.

    Once again it might happen that $q_{j-1}$ and $q_{j+1}$ are actually in different components, say $q_{j-1} \in CL'$ and $q_{j+1} \in CR'$. Now will cut out $p$ as well: let $S, T, W$ be the 3 components of $\Sigma \setminus (q_j, x, p)$ (one of these components is simply $L'$). One of the components must have complexity greater than 2; if not then we would have 
    $$\xi(\Sigma) = \xi(S) + \xi(T) + \xi(W) + 3 \leq 9$$
    Moreover each component has exactly 2 boundaries which allows us to find a replacement for $q_j$ by \autoref{findgoodedge}.

    It might instead happen that $q_{j-1}, q_{j+1} \in CL'$. As before if $\xi(R') \geq 3$ then we can find a suitable replacement for $q_j$ in $CR$. Thus the only problem that may occur is if $\xi(R') \leq 2$. In this case we will repeat what we did above one final time.

    Let $\{r_k\}$ be a path in $CL'$ connecting $q_{j-1}$ and $q_{j+1}$. Once again we are done unless there is an $r_k$ which forms a bad edge with $x$ in such a way that $\Sigma \setminus (x, r_k)$ has two non-pants components $A, B$ with $r_{k-1}, r_{k+1} \in CA$ and $y \in CB$. However in this case we are guaranteed that $B$ has enough complexity to have a good curve disjoint from $y$ since $R \subset R' \subset B$ which implies $\xi(R) < \xi(R') < \xi(B)$. Since $\xi(R) \geq 1$ (recall that $R$ is non-pants by assumption) we are done. See \autoref{diamondsurface} for reference.
\end{proof}

\begin{figure}[h]
\centering
\includegraphics[width=.7\linewidth]{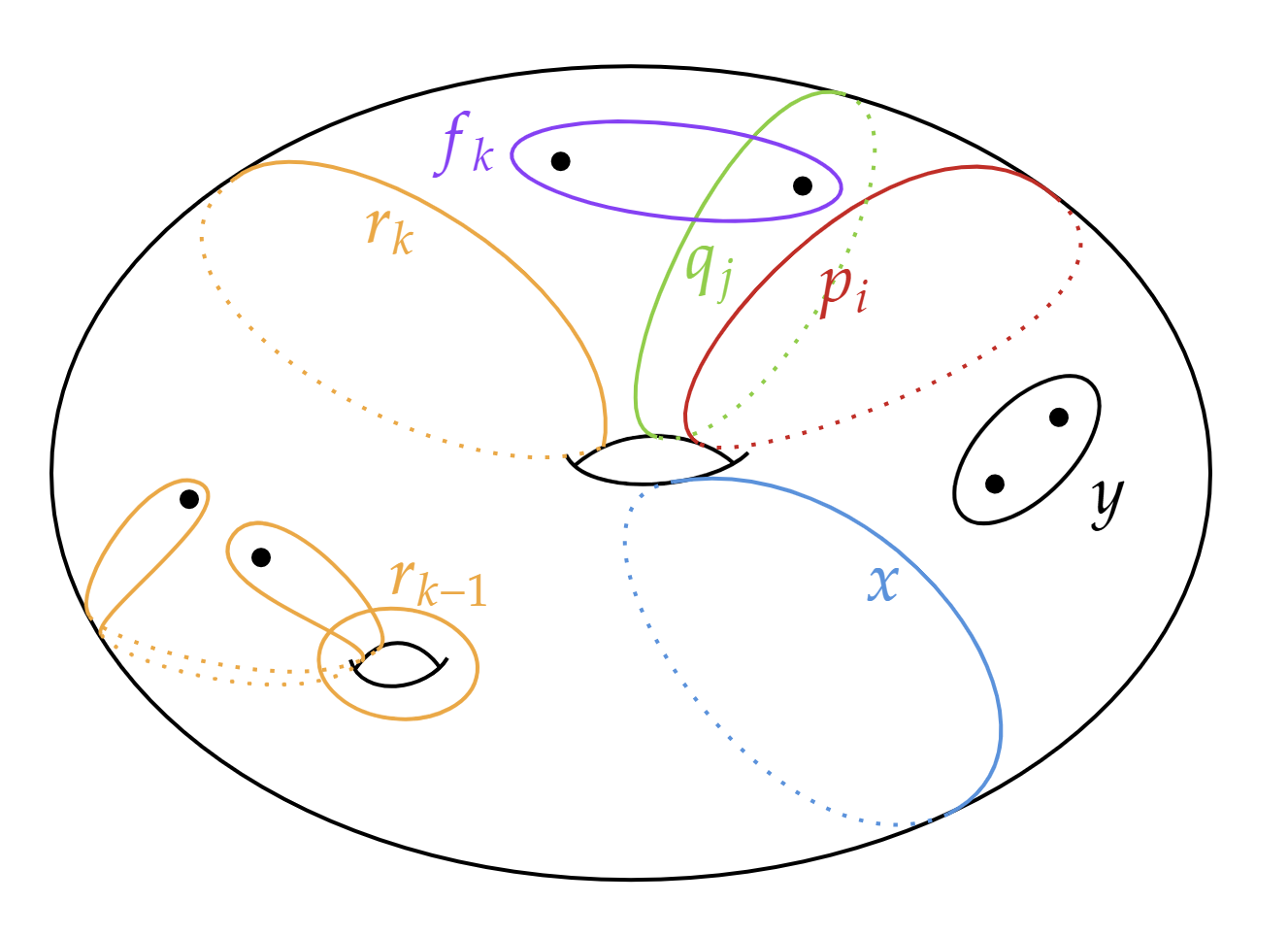}
\caption{The curve $f_k$ is good and guaranteed to exist by complexity considerations}
\label{diamondsurface}
\end{figure}

\begin{figure}[h]
\centering
\includegraphics[width=.7\linewidth]{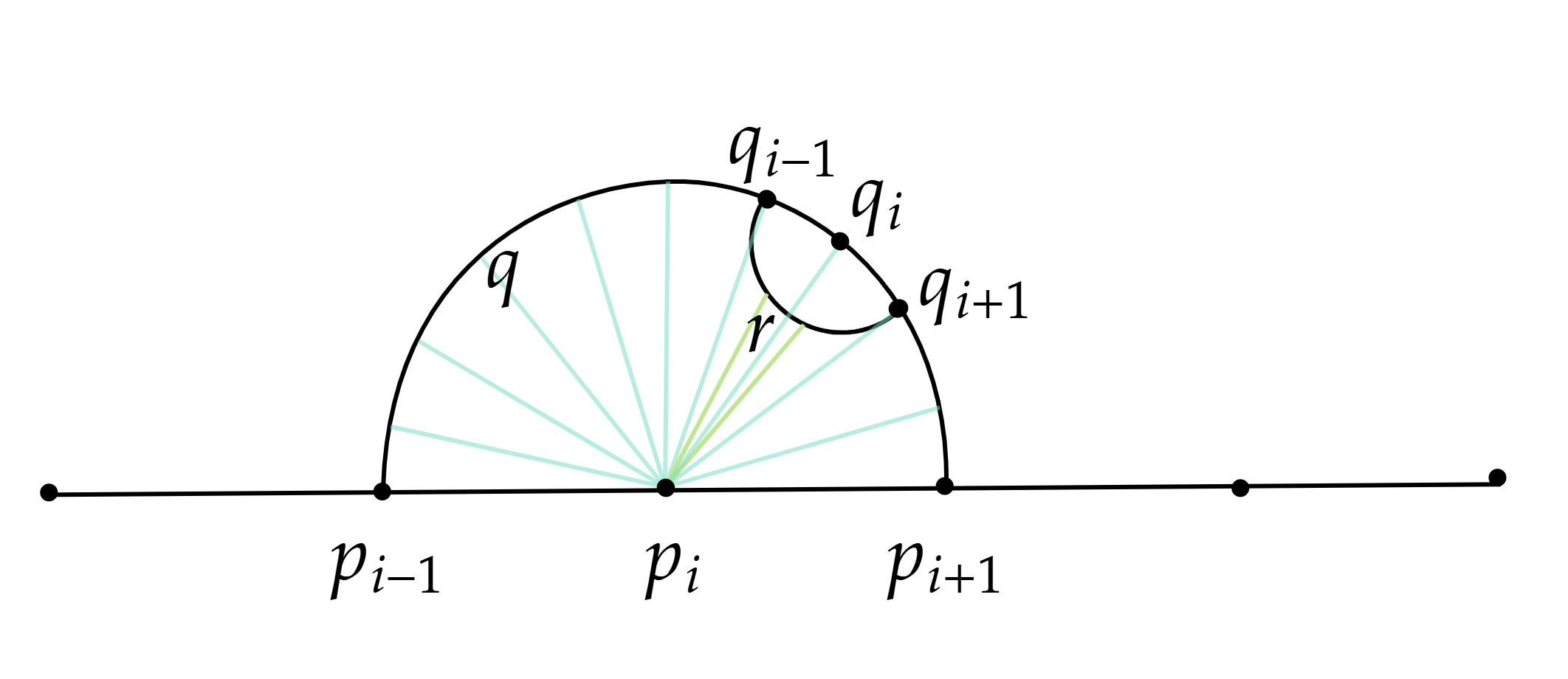}
\caption{connecting by paths in subsurfaces of $V$}
\label{diamond}
\end{figure}

Now we finally put everything together to prove the main theorem of this section.
\begin{proof}[Proof of \autoref{inductionlem2}]
Suppose $(a,b,x,y)$ is a good diamond. Then by \autoref{connecttoO1} there are paths $a=p_1,p_2,...p_{M+C}$ and $b=q_0,q_1,...q_{M+C}$ that form tetrahedra with good triangles with $x,y$. Then by \autoref{Ozisconnected1}, there is a path $p_{M+C}=r_0,r_1,...r_k=q_{M+C}$ that forms tetrahedra with good edges with $x,y$.

Suppose $r_i,r_{i+1},x,y$ is a tetradehron with good edges but not good triangles. We will find some $w_i\in S_{r+1}$ disjoint from $r_i,r_{i+1},x,y$ so that $(r_i,w,x,y)$, $(r_{i+1},w,x,y)$ are both tetrahedra with good triangles.

WLOG suppose $r_i,x,y$ is a bad triangle. Then $\Sigma\setminus (r_i,x,y)$ has two components $L,R$. Say $r_{i+1}\in CR$. Since $\xi(\Sigma)\geq 7$, either $\xi(L)\geq 2$ or $\xi(R)\geq 4$. If $\xi(L)\geq 2$: there is a genus or two punctures on $L$, in which case take $w$ to be a curve around the genus or the two punctures. If $\xi(R)\geq 4$, there is either a genus or two punctures on $R\setminus r_{i+1}$, so take $w$ to be a curve around the genus or the two punctures. 

Then the concatenation
$$ a=p_1,p_2,...p_{M+C},r_0,w_0,r_1,w_1,...r_{k-1},q_{M+C},...q_1,q_0=b$$
is the desired path, where $w_i=r_i$ if $(r_i,r_{i+1},x,y)$ is already a tetrahedron with good triangles.
\end{proof}

We also require this following proposition for later to show some things about the simple connectivity of the subcomplex spanned by the 1-skeleton of $(\Upsilon,\Sigma)$ in $C\Upsilon$.

\begin{proposition}\label{fixdiam}
    Fix $\xi(\Sigma)\geq 10$. Suppose $\Upsilon=\Sigma\setminus z$ where $z$ is good in $\Sigma$ and suppose $x,y,a,b\in C\Upsilon$ are vertices that are good in $\Sigma$ of a diamond. Also suppose that all edges besides $(x,y)$ of the diamond are good in $\Sigma$. Then there is a path $a=p_0,\dots,p_n=b$ where $(p_i,p_{i+1}),(p_i,x),(p_i,y)$ are edges good in $\Sigma$. 
\end{proposition}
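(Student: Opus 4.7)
The plan is to adapt the proof of \autoref{inductionlem2} to this sphere-free setting. The substantial simplification is the absence of any distance-from-origin constraint, so the push-outwards step \autoref{connecttoO1} and all BGIT bookkeeping drop out entirely. What remains is to find an initial path from $a$ to $b$ in the complement of $(x,y)$ using Wright's connectivity results, and then locally fix any bad edges formed with $x$ or $y$ via the procedure of \autoref{fixpath}.

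Concretely, set $V = \Sigma \setminus (z, x, y)$; any valid $p_i$ must lie in $V$. Since $\xi(\Sigma) \geq 10$ we have $\xi(V) \geq 7$, and the non-pants component $V_0$ of $V$ containing $a$ (and, when $(x,y)$ is good in $\Sigma$, also $b$) inherits enough complexity to invoke the appropriate one of \cite[Lemmas 3.12, 3.13, 3.15]{wright2024}, chosen according to the genus, puncture, and paired-boundary profile of $V_0$. This yields a path $a = p'_0, \ldots, p'_m = b$ in $C_0 V_0$ whose vertices and edges are good in $\Sigma$ by \autoref{vertsgood} and \autoref{translate}. Whenever some $(p'_i, x)$ or $(p'_i, y)$ is bad, splice in the local fix of \autoref{fixpath}: cut $\Sigma$ along the offending bad pair, analyse the distribution of the neighbouring $p'_j$ and of $y$ (or $x$) across the resulting components, and insert a short corrective sub-path. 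The complexity analysis in \autoref{fixpath} is calibrated for $\xi(\Sigma) \geq 10$ and carries over verbatim.

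The main obstacle is the case where $(x, y)$ is not good in $\Sigma$, so $V$ has several non-pants components and $a, b$ may sit in different ones. In that configuration $a$ and $b$ are automatically disjoint, and the plan is to bridge them through an intermediate curve $c$ in a third high-complexity component of $V$ so that $(a, c), (c, b), (c, x), (c, y)$ are all good in $\Sigma$; such a $c$ is produced by the pigeonhole $\xi(V) \geq 7$ together with the same case analysis on genus and punctures used in \autoref{fixinglem}, \autoref{findgoodedge}, and \autoref{findgoodtriangle}. Once $c$ is found, the problem reduces to two shorter instances of the connected case. The delicate part is the exhaustive verification that such a $c$ exists in every possible distribution of components of $V$, to be organised in parallel to the case breakdown used throughout \autoref{diamonds}.
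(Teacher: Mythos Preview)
Your overall plan—connect $a$ and $b$ by a Wright-style path and then repair bad edges with $x,y$—matches the paper's strategy, but several steps do not work as you describe.

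First, when $(x,y)$ is bad in $\Sigma$, the cut $\Upsilon\setminus(x,y)$ has exactly \emph{two} non-pants components $L,R$ (the paper defines them this way). There is no ``third high-complexity component'' in which to place your bridging curve $c$. When $a$ and $b$ lie in different components the paper finds $c$ inside $L$ or $R$ via a purpose-built sublemma (what it calls \autoref{tube2}), using the pigeonhole $\xi(L)+\xi(R)\geq 6$ to guarantee one side has $\xi\geq 3$. Your pigeonhole with three components is simply unavailable.

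Second, you have misidentified the hard case. It is not ``$a,b$ in different components'' but rather ``$a,b$ both in $L$ with $\xi(R)\leq 1$''. Here Wright's path in $CL$ may hit vertices $p_i$ forming bad edges with $x$ or $y$, and the repair is not covered by \autoref{fixpath} verbatim: that sublemma is set in $\Sigma$ with the diamond edge $(x,y)$ \emph{good} and uses $\xi(\Sigma)\geq 10$ at each of its three nested refinement levels. Here the ambient surface is $\Upsilon=\Sigma\setminus z$ (so $\xi=9$), the edge $(x,y)$ is \emph{bad}, and the curve $z$ must be tracked throughout—each replacement must remain disjoint from $z$ and good in $\Sigma$. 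The paper accordingly re-runs a three-level refinement (paths $p_i$, then $q_j$, then $r_l$) with complexity bookkeeping customised to this configuration, and introduces two new sublemmas to produce the single bridging curves in the easy cases. Asserting that \autoref{fixpath} ``carries over verbatim'' skips exactly the content of the proof.
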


\begin{proof}
    The idea is that we try to find some $p$ such that it makes good edges in $\Sigma$ with all of $x,y,a,b$. When this is not possible, we construct a path.

    Let $L,R$ be the two non-pants components of $\Upsilon\setminus\{x,y\}$ and $R\subset R'$ where $L,R'$ are the two non-pants components of $\Sigma\setminus \{x,y\}$. This is well defined since the edge $(x,y)$ is not good in $\Sigma$ which implies it is not good in $\Upsilon$ as well.

We will use some sublemmas:
\begin{sublemma} \label{tube}
    Let $\Upsilon$ be a non-pants component of $\Sigma\setminus\{x,y,z\}$ where $x,y$ are good vertices but not a good edge in $\Sigma$, and $(x,z)$, $(y,z)$ are good edges. If $\Upsilon$ has 2 or 4 boundary components, and $\xi(\Upsilon)\geq 1$, then there is some $a\in C\Upsilon$ that is good in $\Sigma$ and makes good edges with any good curve $b$ which does not cut $\Upsilon$. If $\Upsilon$ has 3 boundary components and $\xi(\Upsilon)\geq 2$ then the same conclusion applies.
\end{sublemma}
\begin{proof} \renewcommand{\qedsymbol}{$\blacksquare$}
    If $\Upsilon$ has two boundary components then there are either two punctures or a genus on $\Upsilon$ so there is some $a$ which forms good edges any curve missing $\Upsilon$.

    If $\Upsilon$ has 4 boundary components then 2 of them come from $z$ and $z$ is non-separating on $\Sigma$ so we take $a$ to be a non separating curve on $\Sigma$ going between the genus of $z$ and that of $x$ and $y$.

    If $\Upsilon$ has 3 boundary components, then $z$ is a pants curve and by the complexity count there are 2 punctures on $\Upsilon$ so we take $a$ to be a pants curve.
\end{proof}

\begin{sublemma}\label{tube2}
    Let $\Upsilon$ be a non-pants component of $\Sigma\setminus\{x,y\}$ where $x,y$ are good vertices but not a good edge in $\Sigma$. If $\xi(\Upsilon)\geq 3$ then for any $p\in C\Upsilon$ which is good in $\Sigma$ and $p,y$ is a good edge, there is some $a\in C\Upsilon$ that is good in $\Sigma$ and makes good edges with $p$ and any good curve $b$ which does not cut $\Upsilon$.
\end{sublemma}

\begin{proof} \renewcommand{\qedsymbol}{$\blacksquare$}
    We know that $x$ and $y$ share a genus. Suppose $p$ is non separating in $\Upsilon$. Then we may take $a$ to be a curve that goes from the genus of $p$ to the one of $x$. Then $a,p$ is jointly non-separating. Now take any $b$ which misses $\Upsilon$ ($b$ may be $x$ or $y$). If $b$ is separating on $\Sigma$ then it must be a pants curve, in which case $a,b$ is a good edge. If $b$ is not separating on $\Sigma$ then $\Sigma\setminus \{a,b\}$ is connected so it is a good edge.

    Now suppose $p$ is separating on $\Upsilon$ but not a pants curve. Then since $p,y$ is a good edge, it must be that $p$ cuts out a pants on $\Upsilon$. So then $\xi(\Upsilon\setminus p)=\xi(\Upsilon)-1\geq 2$. Then since $\Upsilon\setminus p$ has two boundary components, this means it has at least 3 punctures or genus so there is some good $a$ which forms good edges with $p$ and any curve missing $\Upsilon$.

    Now suppose $p$ is a pants curve. Then $\Upsilon\setminus p$ has 3 boundary components, so there is at least 2 punctures or genus on $\Upsilon\setminus p$ so again there is some good $a$ which forms good edges with $p$ and any curve missing $\Upsilon$.
\end{proof}

    Suppose both $a,b\in CR$. Then since $L$ is not a pants and has two boundary components, there's some $p\in CL$ such that $p$ makes good edges in $\Sigma$ with all of $x,y,a,b$ by \autoref{tube}.

    Suppose $a\in CL,b\in CR$. Then there is some $p$ either in $CL$ or $CR$ that makes good edges in $\Sigma$ with all of $x,y,a,b$, since $\xi(L)\geq 3$ by \autoref{tube2} or $\xi(R)\geq 4$, which is satisfied since $\xi(L)+\xi(R)\geq 6$.

    Suppose $a,b\in CL$. If $\xi(R)\geq 2$ then there is some $p\in CR$ that makes good edges in $\Sigma$ with $x,y,a,b$.

    Now suppose $a,b\in CL$ and $\xi(R)\leq 1$, which means $\xi(L)\geq 6$, so by \cite{wright2024} we have a path $a=p_0,...p_k=b$, $p_i\in CL$, $(p_i,p_{i+1})$ a good edge in $\Sigma$. 

    Now we need to arrange it so that all $p_i$ make good edges with $x,y$. This is the same procedure as \autoref{Ozisconnected1}. Say $p_i,x$ is a bad edge in $\Sigma$. Look at the two components of $L\setminus p_i$, which are $T$ and $B$. If $p_{i-1}\in CT,p_{i+1}\in CB$ are in different components, one of $\xi(T\setminus p_{i-1}), ~\xi(B\setminus p_{i+1})\geq 2$, or else $\xi(L)=\xi(T)+\xi(B)+1\leq 2+2+1=5$ and then $\xi(\Sigma)\leq 5+1+2+1=9$. But then there is some $p_i'$ in $CT$ or $CB$ that makes good edges with $x,y,p_{i-1},p_{i+1}$. 

    Similarly, if $p_{i-1},p_{i+1}\in CB$ and $\xi(T)\geq 1$ we could find such $p_i'$.
    
\begin{figure}[h]
\centering
\includegraphics[width=1\linewidth]{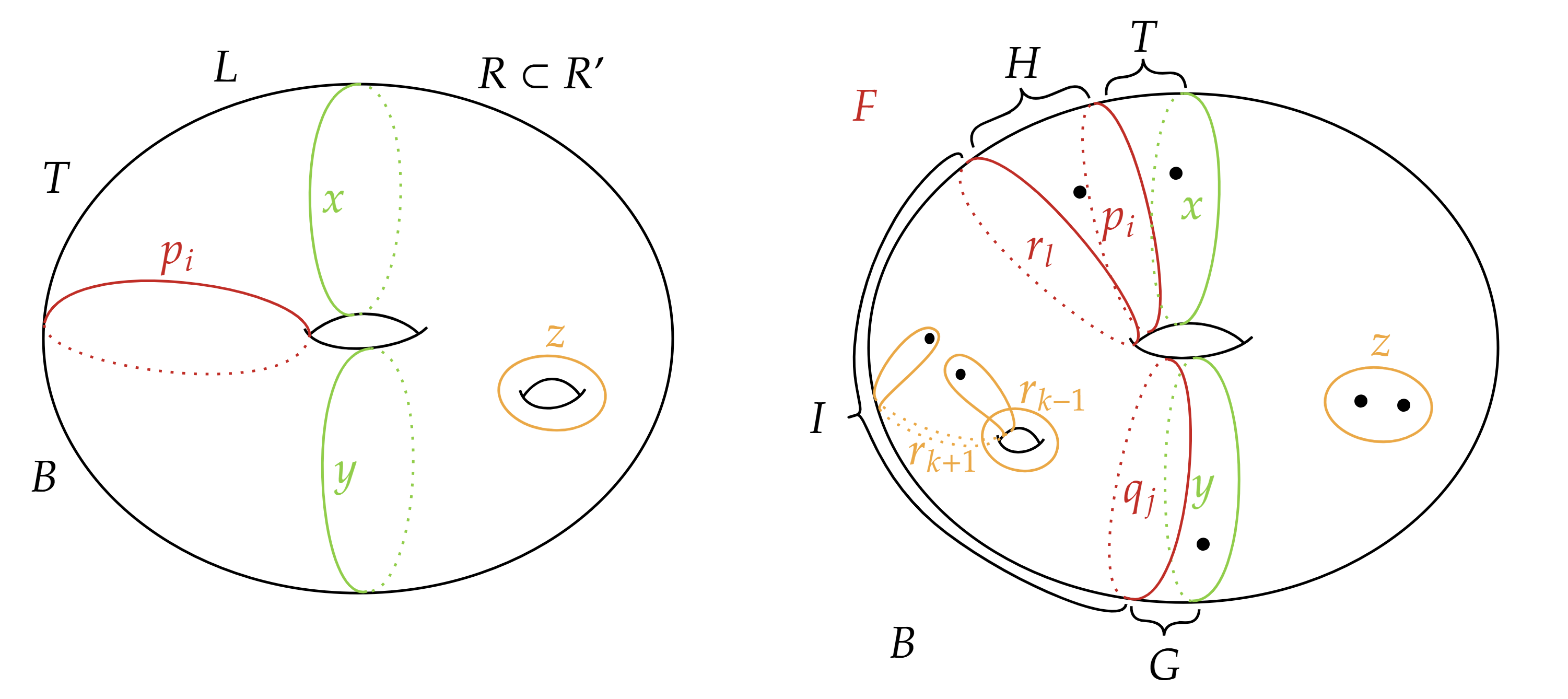}
\caption{Choosing a curve that makes good edges with $x,y,z$ and neighbors in the path}
\label{tubetube}
\end{figure}

    So suppose $\xi(T)=0$ and $p_{i-1},p_{i+1}\in CB$ and so $\xi(B)=\xi(L)-\xi(T)-1\geq 5$. Then we may find by \cite{wright2024} a path
    $$p_{i-1}=q_0,...q_l=p_{i+1}$$
    where $q_j,q_{j+1}$ are good edges in $\Sigma$. But now say $q_j,x$ is not a good edge in $\Sigma$ (in fact if $q_j,y$ is not a good edge then that implies $q_j,x$ is also not a good edge so we only need to consider this case). Then $q_j$ separates $B$ into components $F,G$, and if $q_{j-1},q_{j+1}$ are in different components, we have $$\xi(F)+\xi(G)+1=\xi(B)\geq 5$$, so either one of $\xi(F)$ or $\xi(G)\geq 3$, or both are at least 2. In any case, $B\setminus \{q_{j-1},q_{j+1}\}$ has 2 punctures so we replace $q_j$ with a pants curve. 

    If $q_{j-1},q_{j+1}$ are both in $G$ then we replace $q_j$ with a non-separating or pants curve using a puncture or genus from $F$ and from between $p_i$ and $x$.
    
    So suppose $q_{j-1},q_{j+1}$ are both in $F$. If $\xi(G)\geq1$ then there's some curve $q'$ that is either a pants or non-separating and makes good edges with all the other curves. So suppose not. Then $\xi(F)\geq 4$ and we may find a path
    $$q_{j-1}=r_0,...,r_u=q_{j+1}$$
    in $F$ where the path has good edges in $\Sigma$. 

    Now suppose there is some $r_l$ where $r_l,x$ is a bad edge. Let $H,I$ be the two components of $F\setminus r_l$. If $r_{l-1},r_{l+1}$ are in different components of $H,I$, say $r_{l-1}\in CH$, and we have that one of $\xi(H),\xi(I)$ is at least 2, WLOG say $\xi(H)\geq 2$, because $\xi(H)+\xi(L)+1=\xi(F)\geq 4$. Then there is at least a puncture or genus in $H\setminus r_{l-1}$. Together with the puncture between $p_i$ and $x$, we may replace $r_l$ with a pants curve.

    Now suppose $r_{l-1},r_{l+1}$ are in the same component, suppose they are in $I$. See \autoref{tubetube}. Then $H$ has at least a puncture or genus, so together with the puncture between $p_i$ and $x$, replace $r_l$ with a pants curve that is disjoint from $x,y,r_{l-1},r_{l+1},z$. If they were both in $H$ we would take a pants curve around punctures in $I$ and between $q_j$ and $y$.

    This gives us a good path from $a$ to $b$ where all the edges with $x,y$ are good.
    \end{proof}

\subsection{Cone}\label{cone}

In this section we deal with the cone, which is where there is a vertex $z$ in our triangulation in $S_r$, and the link of this vertex is in $S_{r+1}$. We will replace $z$ and find another triangulation in $S_{r+1}$ of the link of $z$.

\begin{definition}
Fix $z\in C\Sigma, z\in S_r$. Call a loop $X=\{x_0, x_1,...x_n=x_0\}\subset S_{r+1}$ a good \emph{cone} (a cone with good edges) with respect to $z$ if every tuple $(x_i,x_{i+1},z)$ is a good triangle. Call a cone $Y=\{y_0, \dots, y_m=y_0\}$ \emph{homotopic to $X$} if $Y$ is a cone with respect to $z$ and the loop $y_0, \dots, y_m=y_0$ is homotopic to $x_0, x_1, \dots, x_n=x_0$ through the curve complex.
\end{definition}

A useful fact:
\begin{lemma}\label{factedge}
    If $x,y,z$ is a good triangle in $\Sigma$ and $U=\Sigma\setminus z$, then $x,y$ is a good edge in $U$.
\end{lemma}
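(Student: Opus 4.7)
The plan is to unwind the definition of \emph{good} in the statement and observe that the condition for $(x,y)$ being a good edge in $U$ is literally a subset of the conditions provided by $(x,y,z)$ being a good triangle in $\Sigma$.

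First I would set up the key bookkeeping identity: because $x$ and $y$ are curves on $U = \Sigma \setminus z$ (they are disjoint from $z$ as part of a common simplex), cutting $U$ along any subcollection $\sigma \subseteq \{x,y\}$ produces the same surface, component by component, as cutting $\Sigma$ along $\sigma \cup \{z\}$. In symbols, $U \setminus \sigma$ and $\Sigma \setminus (\sigma \cup \{z\})$ have the same connected components, and in particular the same non-pants components.

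Next, to verify that $(x,y)$ is good in $U$, I would run through the subsimplices $\sigma \in \{\{x\},\{y\},\{x,y\}\}$ and in each case apply the identity above together with the fact that $\sigma \cup \{z\}$ is a subsimplex of the good triangle $\{x,y,z\}$ in $\Sigma$. By definition of good triangle, each $\Sigma \setminus (\sigma \cup \{z\})$ has a unique non-pants component, hence so does $U \setminus \sigma$. This checks every required subsimplex condition for $(x,y)$ in $U$.

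There is essentially no obstacle; the only thing to be careful about is that one really does need the whole triangle to be good in $\Sigma$ (not merely the edges), since the $\sigma = \{x,y\}$ case of the edge-goodness of $(x,y)$ in $U$ corresponds to the top-dimensional condition $\Sigma \setminus \{x,y,z\}$ having a unique non-pants, which is exactly the condition that $\{x,y,z\}$ is good as a triangle and not just edgewise. Once this is noted the lemma is immediate.
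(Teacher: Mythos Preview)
Your proposal is correct and is essentially the same argument as the paper's: both use the identity $U\setminus\sigma=\Sigma\setminus(\sigma\cup\{z\})$ and then read off the unique non-pants condition from the assumption that $\{x,y,z\}$ is good in $\Sigma$. The paper phrases each case contrapositively (if $x$ were bad in $U$ then $\{x,z\}$ would be a bad edge in $\Sigma$; if $\{x,y\}$ were bad in $U$ then $\{x,y,z\}$ would be a bad triangle), whereas you verify the subsimplex conditions directly, but the content is identical.
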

\begin{proof}
    We check that $x$ is good in $U$. Suppose not: $x$ separates $U$ into two non-pants. But this means $\Sigma\setminus \{x,z\}$ has two non-pants so is not a good edge in $\Sigma$.

    Now suppose $x,y$ is a bad edge in $U$. Then $U\setminus \{x,y\}=\Sigma\setminus\{x,y,z\}$ has two non-pants so $x,y,z$ is a bad triangle.
\end{proof}

\begin{lemma}[Cone lemma]\label{lem:induction3}
Let $z\in S_r$, $\xi(\Sigma)\geq 8$, and $U$ be the unique non-pants component of $\Sigma\setminus z$. Fix any $M\in \NN$.
Then, given a good cone $X=\{x_i\}_{i=0}^n$, there is a family of good homotopic cones $Y_i=\{y_{i,0}, ,...y_{i,m}=y_{i,0}\}\subset S_{r+1}, i=0,...M$ such that $d_U(y_{M,j}, o)\geq M$ for all $j$. Moreover the homotopy contains only good triangles.
\end{lemma}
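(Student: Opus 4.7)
The plan is to construct the family inductively: starting from $Y_0 = X$, produce $Y_{i+1}$ from $Y_i$ by pushing each vertex of the cone one step further out in $CU$, while preserving the cone structure around $z$ and keeping every triangle of the transition homotopy good.

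For the inductive step, for each triangle $(y_{i,j}, y_{i,j+1}, z)$ of the current cone I would invoke \autoref{inductionlem1} to obtain a vertex $w_j \in S_{r+1}$ such that $(w_j, y_{i,j}, y_{i,j+1}, z)$ is a tetrahedron with good faces. The coarse-density clause of \autoref{inductionlem1} lets us choose $w_j$ whose projection to $C\Upsilon_j$ is arbitrarily far from $o$, where $\Upsilon_j$ is the unique non-pants component of $\Sigma \setminus (y_{i,j} \cup y_{i,j+1} \cup z)$, and since $\Upsilon_j \subset U$, \autoref{bgit} converts this into $d_U(w_j, o)$ being as large as we wish. Consecutive pairs $w_j, w_{j+1}$ both form good triangles with $z$ and with $y_{i,j+1}$, so the diamond lemma \autoref{inductionlem2} (with $z$ playing the role of the $S_r$-vertex and $y_{i,j+1}$ playing the role of $y$) produces a path $w_j = v_0^j, v_1^j, \ldots, v_{p_j}^j = w_{j+1}$ in $S_{r+1}$ whose consecutive edges form good triangles with both $z$ and $y_{i,j+1}$. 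Concatenating these paths around the cone gives the next cone $Y_{i+1}$, all of whose consecutive edges form good triangles with $z$.

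The homotopy from $Y_i$ to $Y_{i+1}$ is then assembled from the good triangles $(y_{i,j}, w_j, z)$, $(w_j, y_{i,j+1}, z)$ and $(y_{i,j}, w_j, y_{i,j+1})$ provided by each tetrahedron, together with the good triangles $(v_l^j, v_{l+1}^j, z)$ and $(v_l^j, v_{l+1}^j, y_{i,j+1})$ provided by each diamond path; every 2-cell of this homotopy is a good triangle. Iterating the construction produces the family $Y_0, Y_1, \ldots, Y_M$.

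The main obstacle is the uniform $CU$-distance bound on the final cone: the $w_j$ are directly controlled through the density in $C\Upsilon_j$, but the intermediate vertices $v_l^j$ produced by the diamond lemma are only guaranteed to lie in $S_{r+1}$. To close this gap I would exploit the structure of the proof of \autoref{Ozisconnected1}, which shows that the diamond path lives in $CV_j$ for $V_j \subset U$ the non-pants of $\Sigma \setminus (z, y_{i,j+1})$ and remains within a uniformly bounded $CV_j$-distance of $\{w_j, w_{j+1}\}$ dictated by the constants in \cite{wright2024} and \autoref{bgit}. Choosing $d_U(w_j, o)$ at the outset of each inductive step larger than $M$ plus this uniform constant then forces $d_U(v_l^j, o) \geq M$, so iterating $M$ times produces the required family.
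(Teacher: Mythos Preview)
Your framework matches the paper's: iterate, at each step use \autoref{inductionlem1} to produce the $w_j$ forming good tetrahedra with $(y_{i,j}, y_{i,j+1}, z)$, and then \autoref{inductionlem2} to connect consecutive $w_j$'s around the cone. The gap is in your distance control, and it occurs twice.

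First, the claim that large $d_{\Upsilon_j}(w_j, o)$ converts via \autoref{bgit} into arbitrarily large $d_U(w_j, o)$ is false. What BGIT actually gives is: once $d_{\Upsilon_j}(w_j, o)$ exceeds the BGIT constant, every $CU$-geodesic from $w_j$ to $o$ contains a curve missing $\Upsilon_j$; since $y_{i,j}$ and $y_{i,j+1}$ are (by \autoref{factedge}) the only curves in $U$ that miss $\Upsilon_j$, one obtains exactly
\[
d_U(w_j, o) \;=\; \min\{d_U(y_{i,j}, o),\, d_U(y_{i,j+1}, o)\} + 1,
\]
not an arbitrary value. So your own phrase ``pushing each vertex one step further out in $CU$'' is the correct picture, and the paper simply increments $\min_j d_U(\,\cdot\,, o)$ by exactly $1$ at each iteration, reaching $M$ after $M$ steps.

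Second, your proposed control on the intermediate $v_l^j$ via a uniformly bounded $CV_j$-distance from $\{w_j, w_{j+1}\}$ is both unnecessary and unavailable: the path built in the proof of \autoref{inductionlem2} first uses \autoref{connecttoO1} to push the endpoints far out in $CV_j$ before connecting them, so it can be arbitrarily long in $CV_j$. The paper instead uses that, by construction in the proof of \autoref{inductionlem2}, every intermediate vertex $b_l$ satisfies $d_W(b_l, o)$ larger than the BGIT constant for $W$ the unique non-pants of $\Sigma \setminus (y_{i,j+1}, z)$; since $y_{i,j+1}$ is the unique curve in $U$ missing $W$, BGIT again gives $d_U(b_l, o) = d_U(y_{i,j+1}, o) + 1$. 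Hence the entire new cone sits at $d_U(\,\cdot\,, o) = d_U(Y_i, o) + 1$, and iterating $M$ times finishes the argument.
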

\begin{proof}
Fix a good cone $X=\{x_i\}$. We will construct a homotopic cone $Y$ such that $d_U(Y,c)=d_U(X,c)+1$ where $d_U(Y,c)=\min_{y\in Y} \{d_U(y,c)\}$. The result follows by iterating this $M$ times.

Let $V$ be the unique component of $S\setminus (x_j\cup x_{j+1}\cup z)$ that isn't a pants, and $U$ be the unique component of $S\setminus z$ that isn't a pants.

For each $j$ we first construct some $a_j$ that is disjoint to $x_j,x_{j+1}$, see \autoref{type1}. For each $j$, since $x_j,x_{j+1},z$ is a good triangle, by \autoref{inductionlem1}, there is a curve $a\in S_{r+1}$ that forms a good tetrahedron $a,x_j,x_{j+1},z$. 
% See Wright sublemma 7.6 because I need a coarse density statement.
%Replacing it with a power of some pseudoanosov we can get $d_V(a,c)>M$ so then $a\in S+{r+1}$
From \autoref{inductionlem1} we also get that $d_V(a,c)>N$, where $N$ is the constant from the BGIT. Then, since $V\subset U$, this means 
% any geodesic from $a$ to $c$ in $CS$ must miss $V$. But then $x_j,x_{j+1},z$ are the only curves that miss $V$ so $a\in S_{r+1}\subset CS$. Also, 
any geodesic from $a$ to $c$ in $CU$ must miss $V\subset U$. But $x_j,x_{j+1}$ are the only such curves since they are a good edge in $U$ by \autoref{factedge}, so $d_U(a,c)= \min\{d_U(x_j,c), d_U(x_{j+1},c)\}+1$. 
Hence for each pair $x_j,x_{j+1}$, we get a corresponding $a_j$. Since there are $n$ such pairs we get $n$ points $a_0,a_1...a_n=a_0$. 

\begin{figure}[h]
\centering
\includegraphics[width=.5\linewidth]{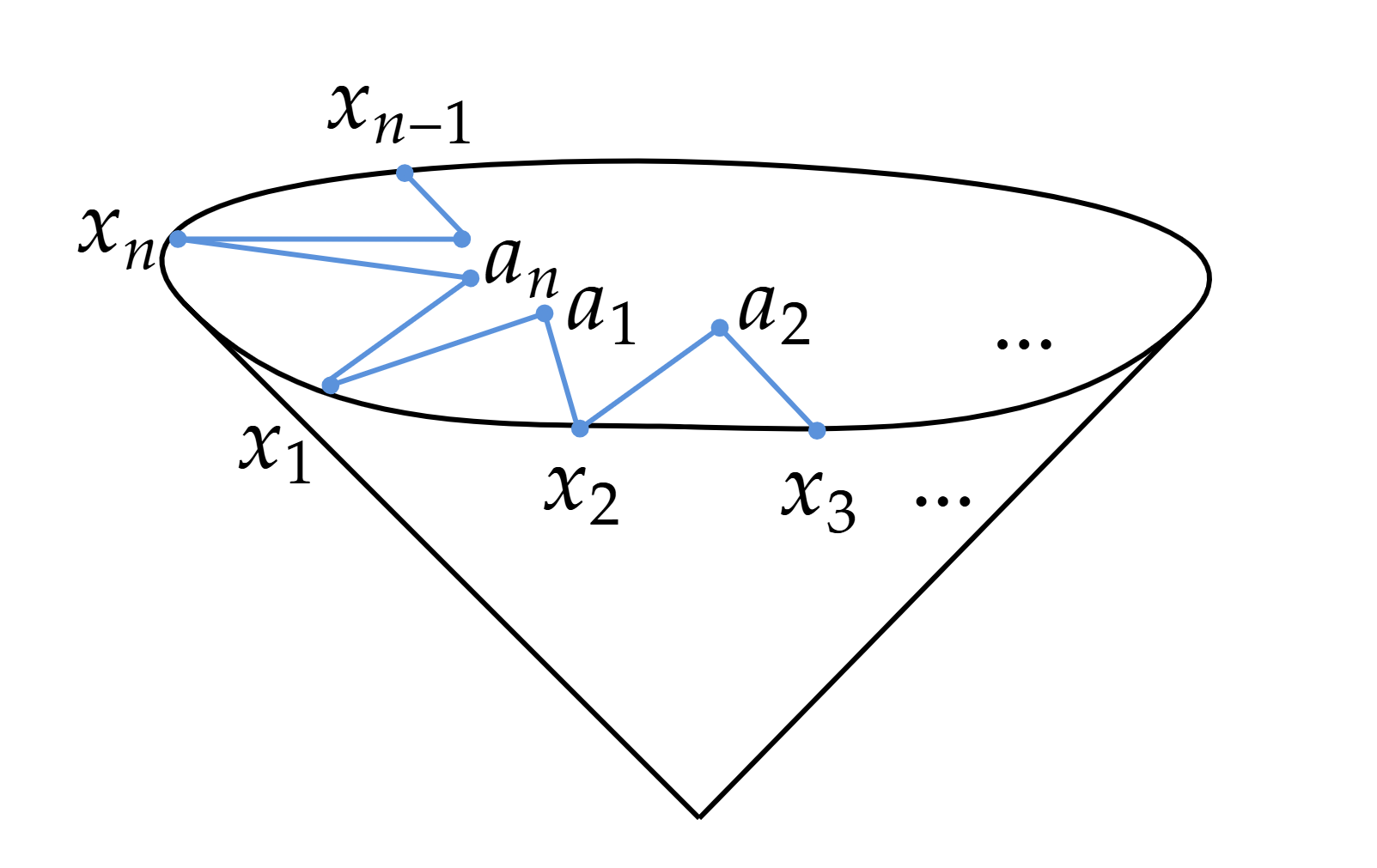}
\caption{Constructing a homotopic cone}
\label{type1}
\end{figure}   

Now we connect these $a_i$ to form a homotopic cone. We have that $a_{i-1},a_i,x_i,z$ is a good diamond, so by \autoref{inductionlem2} there is a path $b_0=a_{i-1},b_1,...b_m=a_i$ where $b_j,b_{j+1},x_i,z$ are tetrahedra with good triangles. By \autoref{inductionlem2} we get that all these $b_j$ are such that $d_W(b_j,c)>N$ where $W$ is the unique non-pants of $\Sigma \setminus (x_i\cup z)$, so then any geodesic from $b_j$ to $c$ in $CU$ must miss $W\subset U$, so must pass through $x_i$ since $x_i$ is good in $U$. So $d_U(b_j,c)= d_U(x_i,c)+1$. 
Now for each $i$ we get a path $a_{i-1}=b_{i,0},b_{i,1},... b_{i,m_i}=a_i$, so concatenating all these paths gives us a cone $B=\{a_0=b_{1,0},b_{1,1},...b_{n,0},...b_{n,m_n}=b_0\}$ such that $d_U(B,c)= %\min\limits_{i,j} d_U(b_{i,j},c)= \min\limits_i d_U(x_i,c)+1 =
d_U(X,c)+1$. Also, by construction $X$ is homotopic to $B$, where all the triangles in the homotopy are good.

% For the inductive step, suppose we have $k$ nested cones $Y_1,...Y_k$. Then using $X=Y_k$ I make a cone $Y_{k+1}=Q$ such that $d_U(q_j,c)\geq \min\{d_U(x_j,c), d_U(x_{j+1},c)\}+1$ or $d_U(q_j,c)\geq d_U(x_i,c)+1$. In particular, $d_U(q_j,c)\geq \min\limits_j\{d_U(x_j,c)\}+1$. 
% So for any $M$, the cone $Y_M=\{y_1,...y_p\}$ is such that $d_U(y_j,c)>M$ for all $j$. 

\end{proof}

\begin{lemma}\label{getgoodlink}
    Let $\xi(\Sigma)\geq 10$, $o\in C\Sigma$ is good and $U=\Sigma\setminus o$. Let $X$ a loop in $CU$ with good edges in $\Sigma$. Then any triangulation $\Delta$ of $X$ is homotopic (relative to $X$) in a $2$-neighborhood of $\Delta$ to a triangulation $\Delta'$ of $X$ where the link of any bad vertex has only good vertices in $\Sigma$.
\end{lemma}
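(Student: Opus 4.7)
The plan is to eliminate every ``bad-bad'' edge of $\Delta$, i.e., every edge whose both endpoints are bad in $\Sigma$, by local stellar subdivisions that insert good vertices. Since the edges of $X$ are good in $\Sigma$ and a good edge by definition has good vertices, every vertex of $X$ is good; hence all bad vertices, and thus all bad-bad edges, lie in the interior of $\Delta$.

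For each bad-bad edge $(u,v)$, shared by triangles $(u,v,a)$ and $(u,v,b)$ of $\Delta$ (with $a \ne b$), I find a good vertex $c \in CU$ disjoint from each of $u,v,a,b$. I then delete the edge $(u,v)$ and the two adjacent triangles, and insert $c$ together with the four triangles $(c,u,a), (c,v,a), (c,u,b), (c,v,b)$. Combinatorially this is a homotopy through the $3$-simplices $(c,u,v,a)$ and $(c,u,v,b)$ of $CU$, both contained in the $1$-neighborhood of $\Delta$ since $c$ is disjoint from four vertices of $\Delta$. Because $c$ is good, every newly introduced edge has at least one good endpoint, so no new bad-bad edges are created while the old one is removed. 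Iterating over all bad-bad edges yields $\Delta'$ in which, by construction, every bad vertex has only good neighbors, with the entire homotopy contained in $B_2(\Delta)$.

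The main obstacle is the existence of such a $c$. Two features complicate matters: first, $a$ and $b$ need not be disjoint as curves on $\Sigma$---they are diagonally opposite in the diamond and are not forced to be adjacent in $\Delta$---so $\{o,u,v,a,b\}$ need not form a multicurve; second, $c$ must be good in $\Sigma$, not merely disjoint from $u,v,a,b$. Nonetheless $\{o,u,v,a\}$ and $\{o,u,v,b\}$ are both multicurves, and the bound $\xi(\Sigma)\ge 10$ leaves ample slack. The strategy is to cut $U$ along $u\cup v$---which loses complexity at most $2$, leaving at least $7$---then locate a component of sufficient complexity, and within it produce a non-separating curve or pants curve of $\Sigma$ disjoint from $a\cup b$. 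The required case analysis mirrors that in \autoref{tube}, \autoref{tube2}, \autoref{findgoodedge}, and \autoref{fixdiam}, and is where the bulk of the verification lies.
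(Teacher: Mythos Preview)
Your strategy—eliminate each bad-bad edge $(u,v)$ by a stellar subdivision through a single good vertex $c$ disjoint from $u,v,a,b$—has a genuine gap: such a $c$ need not exist. The failure occurs precisely when one endpoint, say $u$, lies in $W(\Delta)=\{w\in\Delta : w\text{ good in }U,\ w\text{ bad in }\Sigma\}$. Any such $u$ necessarily cuts a pair of pants $P_u$ off $U$ (its pants must carry both $o$-boundaries, or the $o$-boundary together with a cusp), so $U\setminus(u\cup v)$ can consist of $P_u$, a second pants $V_1$, and a single large component $V_2$ absorbing all the remaining complexity. Both apices $a,b$ are then forced into $V_2$, and nothing in the hypotheses prevents them from \emph{filling} $V_2$; in that event there is no essential curve disjoint from all of $o,u,v,a,b$, and your $c$ simply does not exist. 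A concrete instance: take $o$ non-separating, let $u$ bound the pair of pants containing both $o$-boundaries (so $u$ cuts off a one-holed torus in $\Sigma$), and let $v$ cut $U'=U\setminus(u\cup P_u)$ so that the piece meeting $u$ is a pants carrying one cusp of $\Sigma$. The lemmas you cite do not rescue this: \autoref{tube}, \autoref{tube2}, and \autoref{findgoodedge} treat disjoint configurations, while \autoref{fixdiam} handles the analogous filling situation by producing a \emph{path} of good vertices rather than a single one—exactly the kind of move your single-subdivision framework cannot accommodate.

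The paper sidesteps this with a structurally different first pass: rather than attempting to break bad-bad edges incident to $W(\Delta)$, it deletes each $w\in W(\Delta)$ outright, replacing the star of $w$ by an arbitrary triangulation of $\mathrm{link}(w)$ inside $C(U\setminus w)$. This is a $1$-neighborhood retriangulation, and because any curve disjoint from a $W$-vertex is automatically either good in $\Sigma$ or bad in $U$ as well, no new $W$-vertices appear. Only once $W(\Delta)$ is empty does the paper perform single-vertex insertions on the remaining bad-bad edges; at that stage both endpoints are bad in $U$, which forces $U\setminus(u\cup v)$ to have at least \emph{two} non-pants components, and then the complexity count you had in mind does go through.
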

\begin{proof}
    We will need to first get rid of a particular set of bad vertices. Let $W(\Delta)=\{w\in (U,U) \cap \Delta \mid w\notin (U,\Sigma)\}$ (these are vertices that appear good in $U$ but are not good in $\Sigma$). Pick some $w\in W(\Delta)$. 
    
    The first observation to make is that the link of $w \in W(\Delta)$ cannot contain other vertices in $W(\Delta)$. This is because if $w\in W(\Delta)$ then $w$ must be a curve that cuts out a pants on $U$ and for $w$ to be bad in $\Sigma$ it must be that $w$ either cuts out a four-punctured sphere and a non-pants on $\Sigma$ or a punctured torus and a non-pants on $\Sigma$. In either case, if $x$ were disjoint to $w$ then $x$ must lie in the unique non-pants of $U\setminus w$, in which case it is either good in $\Sigma$ or bad in both $U$ and $\Sigma$.
    
    Now we would like to replace $w$ with some triangulation of the link of $w$. So let $V=U\setminus w$ and we have that $\text{link}(w)\subset CV$. Since $\xi(V)\geq 8$, let $\Delta_w\subset CV$ be a triangulation of $\text{link}(w)$. We have that $\Delta_w$ is disjoint from $w$ so the triangulation $\Delta_1$ we obtain from replacing each $w\in W(\Delta)$ with a triangulation $\Delta_w$ is homotopic in a 1-neighborhood to $\Delta$ and $W(\Delta_1)$ is empty.
    
    Now let $z\in \Delta_1$ which is not in $(U,\Sigma)$. 
    The link of $z$ is $y_0,y_1,...,y_k=y_0$ %(since $z$ is not on the boundary of $D$) 
    and say $y_i$ is also not in $(U,\Sigma)$. Look at the diamond $y_{i-1},y_i,y_{i+1},z$.
    % and we claim there is some good $w$ disjoint to this diamond, which means $D$ is homotopic to another triangulation $D_w$ where $\sum_{z\in \chi(D_w)} \chi(D_w,z)=\sum_{z\in \chi(D)} \chi(D_w,z)<\sum_{z\in \chi(D)} \chi(D,z)$. The equality follows from $D_w$ differing with $D$ by only one good vertex, $w$, thus $\chi(D_w)=\chi(D)$. When we apply this procedure again, we are only adding vertices $w'$ which are distance $1$ from some $z\in \chi(D)$, so then since $\sum_{z\in \chi(D)} \chi(D,z)$ is finite, this process will terminate when we find some triangulation $D'$ homotopic to $D$ with $\sum_{z\in \chi(D)} \chi(D',z)=0$, and $D'$ is in the $1$-neighborhood of $D$.
    
    % We show the above claim. We just have to check that this can be done in the various cases that could happen. 
    Since $z,y_i$ are both not in $(U,\Sigma)$ or $(U,U)$, 
    % it is separating and the components $L,R$ are not pants. The first case to consider is when $y_{i-1}, y_i, y_{i+1}$ are all in the same component. In that case we can pick $w$ to be some good vertex in the other component. The second case that might occur is that 2 of the 3 lie in one component and the third lies in the other. 
    % Say $y_{i-1}, y_i$ are in $L$ and $y_{i+1}$ is in $R$.
    % We are assuming that $y_i$ is a bad vertex so 
    $U\setminus(z \cup y_i)$ has 3 components say $A, B, C$ and at most one of the 3 components can be a pants. 
    % The simplest case is when 
    If there is a non-pants component which does not contain either of $y_{i - 1}$ or $y_{i + 1}$, we can pick $w$ to be a vertex good in $\Sigma$ in this component. 
    % We know such a $w$ exists because the components $A, B, C$ have complexity $\xi \geq 1$. This means that each component has either at least 4 punctures or a genus. If it has a genus we can take $w$ to be non-separating. If not then then at most 2 of punctures come from removing $z$ and $y_i$ so we can take $w$ to be a pants curve around the remaining punctures.
    
    % If $y_{i-1},y_i,y_{i+1}$ are all in the same component say $L$, pick a good vertex in the other component $R$ and we are done. If WLOG $y_{i-1},y_i\in L$ and $y_{i+1}\in R$, note that $y_i$ is also a bad vertex so $z,y_i$ split $S$ into three components, say $A,B,C$ where at most one of them could be a pants. If none are pants, then $y_{i-1},y_{i+1}$ can only belong in 2 of these components so pick $w$ to be a good vertex in the third component, say it is $B$. This exists, since $\xi(B)\geq 1$, so either there are at least $4$ punctures or a genus in $B$. If there is a genus, pick $w$ to be non separating. If there are at least 4 punctures, exactly 2 of them come from $z,y_i$ so pick $x$ to be a pants curve around two punctures that are not those from $z,y_i$.
    
    % The only remaining case is when one of the components, say 
    If that does not happen, it must be that up to relabeling, $B$ is a pants and $y_{i-1} \in A$ and $y_{i+1} \in C$.
    % (if $y_{i-1}, y_{i+1}$ are in the same component then once again we are done). 
    In this case, due to the complexity assumption of $\xi(\Sigma)\geq 10$, one of $A,C$ must have complexity at least $4$, say $A$ does. Then there is a $w$ in $CA$ disjoint from $y_{i-1}$, where $w$ is good in $\Sigma$. To see this look at $A\setminus y_{i-1}$, which is either one or two components. If it were one component, $\xi(A\setminus y_{i-1})\geq 3$, in which case we can find a good curve by the following argument.
    
    If $A \setminus y_{i-1}$ has at most 4 boundary components, then by complexity counting we see that this subsurface must have a genus or at least a pair of punctures. If $A \setminus y_{i-1}$ has more than 4 boundary components then it must be that $o$ and $y_{i-1}$ were both non-separating. In this case we can take the good curve to be a curve going between the genera of $o$ and $y_{i-1}$. To be precise, let $\gamma$ be an arc from $o$ to $y_{i-1}$. Then the curve going between the genera is given by the concatenation $\gamma*o* \gamma^{-1} * y_{i-1}$.
    
    If $A \setminus y_{i-1}$ were two components then by definition $y_{i-1}$ is a separating curve (on $A$). So if $A$ had a genus then the genus would be located on one of these components and we take a non-separating curve around it. If $A$ had no genus then it must have had at least 4 punctures and so there must be at least 2 punctures on one of these components and we take a pants curve around them.
    
    So there's a triangulation $\Delta_2$ homotopic to $\Delta$ in a 1-neighborhood of $\Delta_1$ where the link of each vertex consists of only good vertices in $\Sigma$.
    \end{proof}

    \begin{lemma}\label{linkinonecomponent}
        Let $\xi(\Sigma)\geq 10$, $o\in C\Sigma$ is good and $U=\Sigma\setminus o$. Let $X$ a loop in $CU$ with good edges in $\Sigma$. Let $\Delta$ be a triangulation of $X$ with $W(\Delta)=\emptyset$ (see \autoref{getgoodlink}) such that the link of every bad vertex $z$ has only good vertices. Then there exists a triangulation $\Delta'$ in the 1-neighborhood of $\Delta$ such that the link of every bad vertex $z$ has only good vertices and in addition also lies in the same component of $\Sigma \setminus z$.
    \end{lemma}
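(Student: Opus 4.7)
The plan is to process each bad vertex $z$ of $\Delta$ one at a time, performing only local retriangulations of its star, so that its link ends up in a single component of $\Sigma \setminus z$. By hypothesis every link vertex of a bad $z$ is good in $\Sigma$, so the star of $z$ meets no other bad vertex; hence the modifications at distinct bad vertices are independent and no other bad vertex's link is disturbed.

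Fix a bad $z$. Since a single separating curve has at most two complementary components and $z$ bad in $\Sigma$ forces both of them to be non-pants, $\Sigma \setminus z$ has exactly two non-pants components $L, R$ with $\xi(L) + \xi(R) = \xi(\Sigma) - 1 \geq 9$. After relabelling I may assume $\xi(L) \geq 5$ and take $L$ as the target component. The link of $z$ is a cycle of good-in-$\Sigma$ vertices, each lying in $L$ or $R$; I decompose it into maximal runs of consecutive vertices in $R$ separated by vertices in $L$. If the entire link already lies in $L$ (or entirely in $R$), no work is needed at this $z$.

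The basic local move, for a run $y_i, \dots, y_j \subset R$ flanked by $y_{i-1}, y_{j+1} \in L$, is to find $c \in L$ good in $\Sigma$ disjoint from $y_{i-1}$ and $y_{j+1}$ (disjointness from $z$ and from the $R$-vertices of the run is automatic), and then retriangulate the fan polygon $z, y_{i-1}, y_i, \dots, y_{j+1}$ by coning off $c$: the triangles $(y_{k-1}, y_k, z)$ for $k=i,\dots,j+1$ are replaced by the triangles $(c, y_{k-1}, y_k)$ for the same range, together with $(c, y_{i-1}, z)$ and $(c, y_{j+1}, z)$. The effect on the link of $z$ is to swap the sub-arc $y_i, \dots, y_j$ for the single vertex $c \in L$. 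In the generic case when $y_{i-1}$ and $y_{j+1}$ do not fill $L$, the subsurface $L \setminus (y_{i-1} \cup y_{j+1})$ has positive complexity, and the standard argument (non-separating curve around a genus, or pants curve around a pair of punctures) produces such a good $c$.

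The main obstacle is the filling case, when $y_{i-1}$ and $y_{j+1}$ fill $L$ so no single $c$ disjoint from both exists. I handle this by a two-stage retriangulation: split the fan, insert $c_1 \in L$ good in $\Sigma$ disjoint from $y_{i-1}$ on the left sub-fan (which exists since $\xi(L \setminus y_{i-1}) \geq 4$), and $c_2 \in L$ disjoint from $y_{j+1}$ and from $c_1$ on the right sub-fan. The curves $c_1, c_2$ can be chosen to be disjoint by taking them to be small (non-separating or pants) curves supported on disjoint subsurfaces of $L$, which is possible because $\xi(L) \geq 5$ gives enough room. The residual $R$-vertex $y_l$ between $c_1$ and $c_2$ is then eliminated by a diagonal flip in the quadrilateral $c_1, y_l, c_2, z$ using the good edge $c_1 c_2$. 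Each inserted curve is by construction disjoint from (hence distance one from) a vertex of $\Delta$ and good in $\Sigma$, so the resulting $\Delta'$ lies in $B_1(\Delta)$, its link-goodness property is preserved, and every bad vertex now has its link entirely inside $L$.
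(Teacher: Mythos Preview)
Your outline—process each bad $z$ independently and retriangulate its star to push the link into one component—is sound, and the independence argument (stars of distinct bad vertices share no bad vertices) is correct. But there is a real gap: you never ensure that the inserted curves $c, c_1, c_2$ lie in $CU = C(\Sigma\setminus o)$. You choose the target component $L$ purely by complexity, yet nothing prevents $o$ from lying in $L$; in that case a curve $c\in L$ produced by your recipe may intersect $o$ and hence fail to be a vertex of $CU$ at all. The paper avoids this by labelling the components according to where $o$ sits (always $o\in R$) and then splitting into two cases: if the non-$o$ side has $\xi(L)\geq 3$ it moves the link into $L$, so new curves automatically miss $o$; if $\xi(L)<3$ it is forced to work on the $o$-side, but then it works inside $S=R\setminus o$ (using $\xi(S)\geq 6$) and afterwards repairs any curves that form bad edges with $o$. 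Your argument is missing this entire consideration.

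A second, smaller gap is your filling case. The claim that one can choose good $c_1, c_2$ on ``disjoint subsurfaces of $L$'' with $c_1$ disjoint from $y_{i-1}$ and $c_2$ disjoint from $y_{j+1}$ is not justified: when $y_{i-1}, y_{j+1}$ fill $L$, any curve missing $y_{i-1}$ must cut $y_{j+1}$, so $c_1$ meets $L\setminus y_{j+1}$ in an arc system that can obstruct finding a disjoint $c_2$; ``$\xi(L)\geq 5$ gives enough room'' is an assertion, not an argument. The paper does not attempt a one- or two-curve shortcut here. Instead it invokes Wright's connectivity of $C_0 L$ (his Lemmas~3.12--3.15) to produce an entire \emph{path} $y_{i-1}=w_0,\dots,w_l=y_{i+1}$ of good curves in $CL$ and retriangulates via the tetrahedra $(w_k,w_{k+1},z,y_i)$. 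That path argument works uniformly once $\xi(L)\geq 3$ and sidesteps the filling obstruction altogether.
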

    \begin{proof}
    Due to the complexity assumption, $\xi(\Sigma)\geq 7$, so $z$ splits $\Sigma$ into two components $L,R$ with $o\in CR$. We first consider the case $\xi(L) \geq 3$. Label the link of $z$ as $y_0,...y_{n_z}=y_0$. Let $R(z,\Delta)$ be the number of vertices in the link of $z$ with respect to a triangulation $D$ that lie in $R$.

    Let $\Delta'$ be a triangulation in the 1-neighborhood of $\Delta$ such that $\sum_{z\in\chi(\Delta')} R(z,\Delta')$ is minimal. We will show if this quantity is positive there exists another triangulation for which it is strictly smaller.
    
    Say $z$ is such that $R(z,\Delta')$ is positive and $y_i \in CR$. If either of $y_{i-1},y_{i+1}\in CR$ then by the complexity assumption on $L$ there is a $w\in CL$ disjoint from whichever one of $y_{i-1},y_{i+1}$ is in $L$ and good, by Wright lemma 3.10. As usual this is done by complexity counting and observing that $L$ has exactly one boundary component, the one from $z$. So $L$ must have (at least) two genus, one genus and at least 2 punctures or no genus and at least 5 punctures.
    
    If both $y_{i-1},y_{i+1}$ are in $L$, we note that because of the complexity of $L$, we can use Wright lemmas 3.12-3.15 to get that there is a path $y_{i-1}=w_0,w_1,...w_l=y_{i+1}$ such that these $w_i\in CL$. Since these $w_i$ form tetrahedra $w_i,w_{i+1},z,y$, this gives a homotopy from the diamond $z,y_{i-1},y,y_{i+1}$ to this sub triangulation $z,y_{i-1},y,y_{i+1}, \{w_j\}_j$. So we can replace $\Delta'$ in either case with some $\tilde \Delta$ such that
    $$\sum_{z\in\chi(\tilde \Delta)} R(z,\tilde \Delta)<\sum_{z\in\chi(\Delta)} R(z,\Delta')$$
    But $\chi(\tilde \Delta)=\chi(\Delta)$ and $\Delta'$ was chosen to minimise the sum so we get a contradiction.

    The other case that may happen is that the complexity of $L$ is too small for the above argument. In this case we need to move the link to $R$ but we also need to be careful about avoiding $o$.

    So suppose now that $\xi(L) < 3$. In this case we must have $\xi(R) \geq 7$ so $\xi(S) \geq 6$ where $S = R \setminus o$. Suppose $y_i$ is a vertex in the link of $z$ that lies in $L$. The simplest case is if there exists a good $y_i' \in CS$ disjoint from $y_{i-1}$ and $y_{i+1}$. This occurs if the latter two curves both lie in $CL$ or even if exactly one of them does. However it may be that $y_{i-1}, y_{i+1}$ lie in $CS$ and are filling. In this case, as always, we appeal to \cite{wright2024} to find a path of good vertices with good edges connecting $y_{i-1}$ and $y_{i+1}$ in $CS$. If $S$ has at least 2 genus we can apply 3.12. Since $S$ has at most 3 boundary components, it has at least 3 punctures in the genus 1 case and at least 6 in the genus 0 case. Thus one of 3.13 or 3.15 of \cite{wright2024} hold to give us connectivity. If all of the vertices form good edges with $o$, we are done.

    Suppose there is some $y_i$ which forms a bad edge with $o$. Consider $R \setminus (o, y_i)$ which has 2 non-pants components say $R_1, R_2$. Since $R$ has complexity at least 7, we must have that one of $R_1, R_2$ has complexity at least 3. Say it is $R_1$. Since either component can have at most 3 boundary components, it must be that $R_1$ has either a genus or at least 3 punctures and hence we can find a good curve to add to the triangulation making good edges with $o, y_i, y_{i-1}$ and $y_{i+1}$.
    
    % So this gives us a homotopy in the $1$-neighborhood of $D$ to some $D'$ with $V(D')=0$ and the link of $z$ being contained entirely in one of $L$ or $R$. 
    \end{proof}

    \begin{theorem}\label{Ccsimplycon}
    Let $\xi(\Sigma)\geq 10$, $o\in C\Sigma$ is good and $U=\Sigma\setminus o$. Let $X$ a loop in $CU$ with good edges in $\Sigma$. Then any triangulation $\Delta$ of $X$ is homotopic (relative $X$) in a $5$-neighborhood of $\Delta$ to a triangulation $\Delta^*$ of $X$ where $\Delta^*$ has good edges in $\Sigma$. In particular, the subcomplex generated by the 1-skeleton of $(U,\Sigma)$ in $CU$ is simply connected.
    \end{theorem}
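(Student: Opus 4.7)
The plan is to modify $\Delta$ in three successive stages---first cleaning up vertex neighborhoods, then eliminating bad vertices, and finally repairing the remaining bad edges---keeping the entire homotopy inside a $5$-neighborhood of $\Delta$.

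For the preparation stage I would first apply \autoref{getgoodlink} to $\Delta$ to obtain $\Delta_1$ in the $2$-neighborhood of $\Delta$ in which every $\Sigma$-bad vertex has a link consisting only of $\Sigma$-good vertices. Then, applying \autoref{linkinonecomponent} to $\Delta_1$, I would get $\Delta_2$ in a further $1$-neighborhood (hence a $3$-neighborhood of $\Delta$) with the additional property that the link of each bad vertex lies in a single component of $U$ minus that vertex.

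The core step is to eliminate the bad vertices. Fix a bad $z\in\Delta_2$. Its link $y_0,\dots,y_n=y_0$ is a loop of $\Sigma$-good vertices living in a single component $C$ of $U\setminus z$, and because $\xi(\Sigma)\geq 10$ forces $\xi(C)$ to be large, I would combine the connectivity of $C_0 C$ (from Lemmas 3.12, 3.13 and 3.15 of \cite{wright2024}) with repeated applications of \autoref{fixdiam} to build a triangulated disk inside $C_0 C$ bounded by the link. By \autoref{translate} applied to $\Upsilon=\Sigma\setminus z$, every edge of $C_0 C\subset C_0\Upsilon$ is good in $\Sigma$, so this disk has only $\Sigma$-good edges. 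Replacing the star of $z$ by the disk eliminates $z$, and since the new vertices are disjoint from $z$ the modification stays in a $1$-neighborhood. Iterating over all bad vertices puts us in the $4$-neighborhood of $\Delta$ at a triangulation $\Delta_3$ all of whose vertices are $\Sigma$-good. Finally, every interior edge of $\Delta_3$ bounds exactly two triangles (boundary edges of $X$ are good in $\Sigma$ by hypothesis), so any remaining bad edge sits inside a diamond of good vertices; applying \autoref{fixdiam} to each such diamond replaces the bad diagonal by a path of good vertices making good edges with the two tips, producing the desired $\Delta^*$ in the $5$-neighborhood of $\Delta$. The ``in particular'' statement is then immediate: any loop in the 1-skeleton of $(U,\Sigma)$ admits some triangulation in $CU$, and the above argument fills it by a disk whose 1-skeleton lies in $(U,\Sigma)$.

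The hard part will be the vertex-removal step. The link of a bad $z$ may fill $C$, so no single good curve disjoint from all the $y_i$ exists, and one genuinely has to construct an explicit triangulated disk in $C_0 C$ bounded by the link. Doing this will require delicate casework on the complexity of $C$ (depending on whether $z$ is separating or non-separating and how many components $\Sigma\setminus z$ has), together with the auxiliary results \autoref{findgoodedge} and \autoref{findgoodtriangle}, iterated use of \autoref{fixdiam} to repair any bad edges appearing in intermediate paths, and careful use of \autoref{translate} to guarantee that the vertices introduced really are good in $\Sigma$ and that no new bad features arise.
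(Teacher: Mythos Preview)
Your outline tracks the paper's proof through the first two steps (applying \autoref{getgoodlink} and then \autoref{linkinonecomponent}), but diverges at the vertex-removal stage and has a gap at the edge stage.

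\textbf{Vertex removal.} You propose to work \emph{inside} the component $C$ containing the link and to manufacture a triangulated disk there using connectivity of $C_0 C$ together with \autoref{fixdiam}. This is both harder than necessary and not clearly justified: connectivity of $C_0 C$ only produces paths, and \autoref{fixdiam} only repairs individual diamonds, so the combination does not by itself yield a filling disk for an arbitrary loop in $C_0 C$ (that would essentially be the statement of the theorem you are proving, one level down, and the complexity of $C$ need not be large enough to rerun the argument). The paper avoids this entirely: once \autoref{linkinonecomponent} has placed the link in a single component of $\Sigma\setminus z$, one simply replaces $z$ by a single good curve in the \emph{other} component. That curve is automatically disjoint from every link vertex, so it cones off the link in one move. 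The only thing to check is that such a curve exists, which uses that the other component is not a pants (and, when that component contains $o$, that $z\notin W(\Delta)$ so the complement of $o$ there is still not a pants).

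\textbf{Edge removal.} Your final step applies \autoref{fixdiam} directly to each bad edge, but the hypotheses of \autoref{fixdiam} require that the four outer edges $(a,x),(a,y),(b,x),(b,y)$ of the diamond are already good in $\Sigma$. After eliminating bad vertices there is no reason a triangle cannot contain two bad edges, so this hypothesis can fail. The paper inserts an intermediate ``isolation'' step: for any triangle with two bad edges, the configuration forces all three of $x,y,a$ to be non-separating, and a complexity count ($\xi(\Sigma)\ge 7$ gives some component of $\Sigma\setminus(a,x,y)$ with $\xi\ge 2$) produces a good curve $w$ forming good edges with all three. Only after every bad edge sits in a diamond whose other four edges are good does one invoke \autoref{fixdiam}.
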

    \begin{proof}
    
    Let $X=\{x_0,x_1,...x_n=x_0\}\subset (U,\Sigma)$ be a loop. Since $CU$ is simply connected, let $\Delta \subset CU$ be a triangulation of $X$. Let $V(\Delta)$ be the set of vertices in $\Delta$ that are not good and $E(\Delta)$ be the set of edges that are not good. We will find some triangulation $\Delta_5$ homotopic in the $5$-neighborhood of $\Delta$ such that $\#V(\Delta_5)+\#E(\Delta_5)=0$, which would mean $\Delta_5 \subset (U,\Sigma)$.
    
    Suppose that $V(\Delta)$ is not empty. Let $\chi(\Delta)$ be the set of bad vertices in $\Delta$ and $\chi(\Delta,z)$ be the number of bad vertices in $\Delta$ that are in the link of $z$. We use \autoref{getgoodlink} to ensure that all bad vertices are `isolated' in the sense that no bad vertices have neighbours which are also bad. To be precise, we claim that $\Delta$ is homotopic in its $2$-neighborhood in $C\Sigma$ to a triangulation $\Delta_2$ such that $\sum_{z\in \chi(\Delta_2)} \chi(\Delta_2,z)=0$.
    
    Next we can use \autoref{linkinonecomponent} to find $\Delta_3$ in the 1-neighborhood of $\Delta_2$ (hence in the 3-neighborhood of $\Delta$) such that not only does the link of every bad vertex $z$ contain only good curves but in addition, all the curves in the link lie in the same component of $\Sigma\setminus z$.

    Now, since $\Delta_3$ is such that the link of every bad vertex $z$ has only good curves that lie in the same component of $\Sigma \setminus z$, $z$ can be replaced with a good curve in the other component. Using the notation from \autoref{linkinonecomponent}, such a curve is easy to find if the link is contained in $R$, the component with $o$. If the link is contained in $L$, then the existence of a good curve in $R$ disjoint from $o$ requires us to use the fact that $z\notin W(\Delta)$, so $R\setminus o$ is not a pants.
    
    % % If it has genus or at least 2 punctures then a good curve is easy to find. so has either genus or at least two punctures, or if it had one puncture then $o$ is non-separating and so there is another non-separating curve on $R$ disjoint from $o$.
    
    Now we have that $V(\Delta_3)=0$. Suppose $E(\Delta_3)>0$. So pick a bad edge $(x,y)$. Since it's not a boundary edge, the edge is the common edge of two triangles, say $(a,x,y)$ and $(b,x,y)$. Let $Q(x,y,\Delta')$ be the number of edges in $\{(a,x),(a,y),(b,x),(b,y)\}$ that are bad in a triangulation $\Delta'$. We claim that there's some $\Delta_4$ homotopic in the $1$-neighborhood of $\Delta_3$ such that $\sum_{(x,y) ~\text{bad}}Q(x,y,\Delta_4)=0$.
    
    \begin{figure}[h]
    \centering
    \includegraphics[width=.5\linewidth]{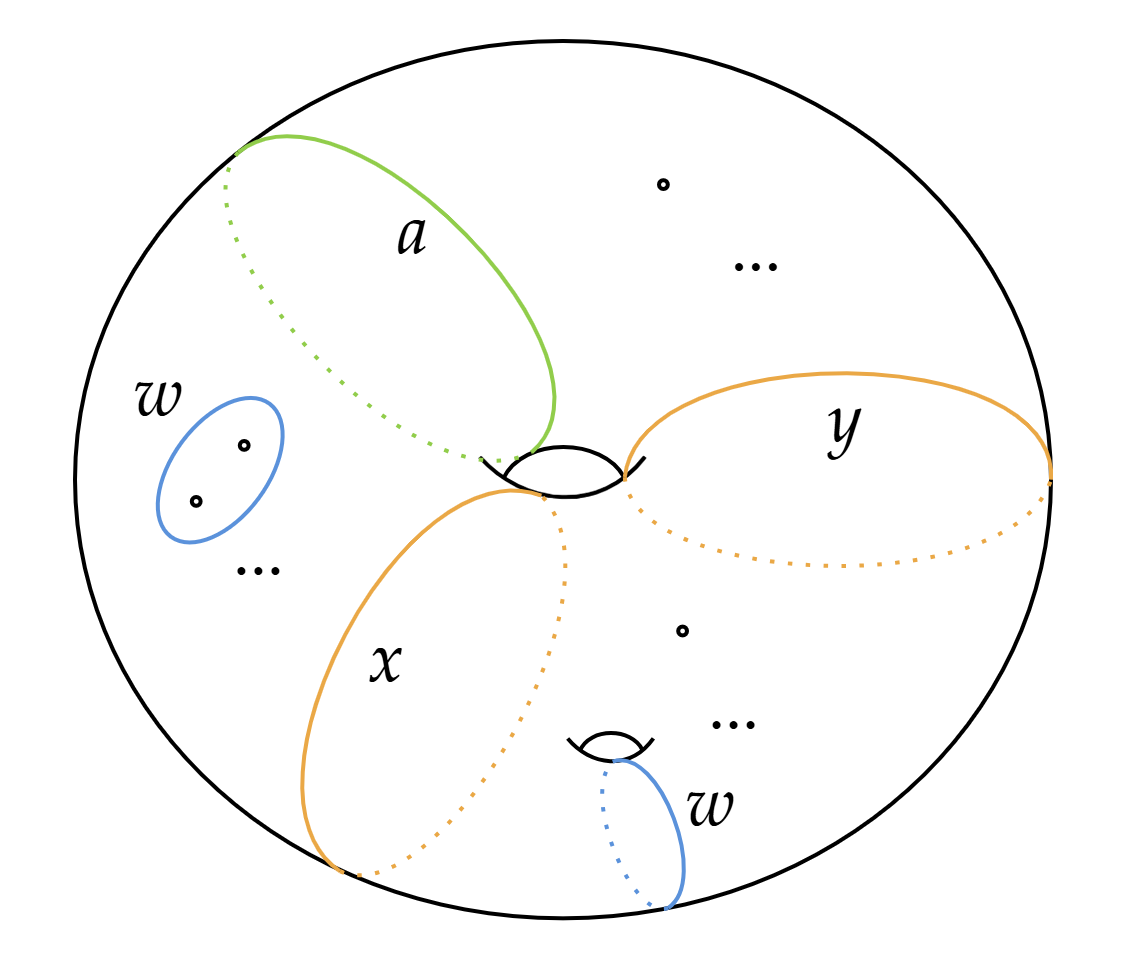}
    \caption{Bad edges in a triangle}\label{badedge}
    \end{figure}   
    
    Pick a bad edge $(x,y)$ with $(a,x)$ also a bad edge. Since these edges are both bad, up to homeomorphism they are the curves in \autoref{badedge}. This is because if $x,a$ are good vertices, and if $x$ were a pants curve, then $(x,a)$ would be a good edge. So the only instance of these bad edges is when $x,a,y$ are non-separating.
    
    We will find a $w$ so that $(w,a), (w,x), (w,y)$ are all good edges. Notice $S\setminus(a\cup x\cup y)$ has 3 components. Since $\xi(\Sigma)\geq 7$, there is a component $V$ with $\xi(V)\geq 2$ (if all the components were complexity 1, we would get $\xi(\Sigma)=6$). Thus $V$ either has at least 3 punctures or at least one genus. In the first case we can take $w$ to be a pants curve around 2 of the punctures and in the latter we can take $w$ to be a non-separating curve.
    
    So then we may homotope $\Delta_3$ in its $1$-neighborhood to some $\Delta_3'$ where
    
    $$\sum_{(x,y) \text{ bad in }\Delta_3'}Q(x,y,\Delta_3') ~< \sum_{(x,y) \text{ bad in } \Delta_3}Q(x,y,\Delta_3)$$
    
    But then since $\{(x,y) ~bad~in~\Delta_3'\}=\{(x,y) ~bad~in~\Delta_3\}$ and these sets are finite, we may apply this process a finite number of times to get a homotopic triangulation $\Delta_4$ in the $1$-neighborhood of $\Delta_3$ (and in the $4$-neighborhood of $\Delta$).
    
    So $\Delta_4$ is such that every triple $a,x,y$ has at most one bad edge. So let $(x,y)$ be a bad edge, and $(a,x,y),(b,x,y)$ each have exactly one bad edge (which they are sharing). By \autoref{fixdiam} we have a homotopy in the $1$-neighborhood of $\Delta_4$ to a triangulation $\Delta_5$ with $E(\Delta_5)=0$. Since by the step with $\Delta_3$ we have only good vertices in our triangulation and every step after we have only introduced good vertices, $V(\Delta_5)=0$ and so $V(\Delta_5)+E(\Delta_5)=0$ and $\Delta_5$ is homotopic to $\Delta$ in the $5$-neighborhood of $\Delta$.
\end{proof}

We must redo the above theorem for loops in $C\Sigma$ instead of $CU$. This is much easier now that we are working in $C\Sigma$.

\begin{theorem}\label{homotopins}
    Suppose $\xi(\Sigma)\geq 10$ and $\Delta$ is a triangulation of a good loop $\gamma\subset C\Sigma$. Then there is some $\Delta'$ a triangulation with good edges of $\gamma$ which is homotopic rel $\gamma$ to $\Delta$ in $B_5(\Delta)$. 
\end{theorem}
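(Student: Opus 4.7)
The plan is to follow the proof of \autoref{Ccsimplycon} step-for-step but in the ambient complex $C\Sigma$ in place of $CU$. The key simplification is that we no longer have a fixed basepoint $o$ to avoid, so each of the vertex- and edge-replacement arguments is either strictly easier or identical.

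Concretely, starting from $\Delta$, first I would apply the direct analog of \autoref{getgoodlink} to produce a triangulation $\Delta_2$ in the $2$-neighborhood of $\Delta$ in which the link of every bad vertex consists only of good-in-$\Sigma$ vertices. The proof of \autoref{getgoodlink} is already local to $\Sigma$ and transfers verbatim. Next I would apply the analog of \autoref{linkinonecomponent} to obtain $\Delta_3$ in the $1$-neighborhood of $\Delta_2$ such that for each bad vertex $z$ the link of $z$ lies in a single component of $\Sigma\setminus z$; this step is even easier than the original, since there is no basepoint to avoid when invoking the connectivity lemmas 3.12--3.15 of \cite{wright2024}.

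With $\Delta_3$ in hand, each bad vertex $z$ can be replaced by a good curve in the component of $\Sigma\setminus z$ opposite to its link: $z$ being bad makes both components non-pants, and $\xi(\Sigma)\geq 10$ easily supplies a non-separating or pants curve there by the same considerations used in \autoref{vertsgood} and the proof of \autoref{translate}. The resulting triangulation has no bad vertices. Finally, bad edges are removed exactly as in the last part of the proof of \autoref{Ccsimplycon}: first arrange, by a homotopy in a $1$-neighborhood, that no triangle contains two bad edges (using $\xi(\Sigma)\geq 7$ to insert a good auxiliary curve disjoint from an offending triple), and then apply \autoref{fixdiam} to each remaining bad edge $(x,y)$, which sits as the middle edge of a diamond with otherwise good edges.

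The main subtlety is that \autoref{fixdiam} is formulated with respect to an auxiliary subsurface $\Upsilon = \Sigma\setminus z$, whereas here the diamonds live in $C\Sigma$ directly. One way to handle this is to observe that its proof only ever uses $\Upsilon$ to restrict which curves are allowed in the replacement path and goes through verbatim with $\Upsilon = \Sigma$; alternatively, one can exhibit a good $z$ disjoint from the four vertices of the diamond using a routine complexity count, since the four diamond curves have disjoint complements of total complexity at least $\xi(\Sigma)-2\geq 8$, more than enough to guarantee such a $z$. Tracking neighborhoods across the four stages $(2+1+1+1)$ gives a triangulation $\Delta'$ in the $5$-neighborhood of $\Delta$ with good edges, as desired.
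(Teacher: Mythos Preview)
Your strategy is correct and matches the paper's overall architecture, but the paper does \emph{not} reuse the $CU$ lemmas verbatim; it proves shorter, tailored versions for $C\Sigma$. Two differences are worth flagging.

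First, \autoref{getgoodlink} does not transfer verbatim: its first half treats the set $W(\Delta)$ of vertices good in $U$ but bad in $\Sigma$, a notion that is vacuous when working in $C\Sigma$. Dropping that step, the remainder yields the result in a $1$-neighborhood rather than $2$, which is exactly what the paper states and proves separately as \autoref{getgoodlinkSigma}. This also explains why your neighborhood count $(2+1+1+1)$ is looser than necessary.

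Second, and more substantively, the paper's edge-fixing step avoids \autoref{fixdiam} entirely. Once all vertices are good, for a bad edge $(x,y)$ in a diamond $(a,x,y),(b,x,y)$ the components $L,R$ of $\Sigma\setminus(x,y)$ satisfy $\xi(L)+\xi(R)\geq 8$; the paper finds a single good curve $w$ disjoint from $a,b,x,y$ directly (in the component opposite $a,b$, or via $\xi(R)\geq 4$ if $a,b$ lie in different components) and replaces the diamond by the four triangles through $w$. This one-move replacement eliminates the need for your preprocessing step (``no triangle with two bad edges'') and for \autoref{fixdiam}. Your route through \autoref{fixdiam} would still work, but the justification that its proof ``goes through with $\Upsilon=\Sigma$'' requires actually rereading the sublemmas, and your alternative of producing an auxiliary $z$ disjoint from the whole diamond, while feasible, is not quite the routine complexity count you describe (since $a,b$ need not be disjoint).
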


\begin{proof}
    We break this into a few lemmas.

\begin{lemma}\label{getgoodlinkSigma}
    Suppose $\xi(\Sigma)\geq 10$ and $\Delta$ is a triangulation of a good loop $\gamma\subset C\Sigma$. Then there is some $\Delta'$ a triangulation of $\gamma$ such that the link of all bad vertices are good, which is homotopic rel $\gamma$ to $\Delta$ in $B_1(\Delta)$. 
\end{lemma}
\begin{proof}\renewcommand{\qedsymbol}{$\blacksquare$}
    As we did earlier, we want to isolate bad vertices, i.e. ensure that the link of a bad vertex has only good vertices. Let $w$ be a bad vertex in $\Delta$ and let $y_i$ be a bad vertex in $\text{link}(w)$. Since $w,y_i$ are both bad, $\Sigma \setminus \{w,y_i\}$ is 3 components, say $E,F,G$ where $F$ has two boundary components and $E,G$ each have one. By assumption that $w,y_i$ are bad, $E,G$ are both not pants. Now if $y_{i-1},y_{i+1}$ are distributed across these components in such a way that there is a component that contains a good curve disjoint to all of $w,y_{i-1},y_i,y_{i+1}$ then we are done. We show that the complexity assumption gives us such a curve. 

    If $y_{i-1},y_{i+1}$ both miss $E$ or both miss $G$, we can find such a curve since there is another component with exactly one boundary component which is not a pants.

    If $y_{i-1}\in CE,y_{i+1}\in CG$ and $\xi(F)\geq 1$, we are once again done since there is a good curve in $F$.

    If $y_{i-1}\in CE,y_{i+1}\in CG$ and $\xi(F)=0$, we have $10\leq \xi(\Sigma)=\xi(E)+\xi(G)+2$ so WLOG $\xi(E)\geq 4$. Suppose $E\setminus y_{i-1}$ is one component, which means $E \setminus y_{i-1}$ has complexity at least 3 and has 3 boundary components. This means there is a genus on it or at least 3 punctures and we are done.
    
    It may instead be that $E\setminus y_{i-1}$ has two components say $A,B$, with $4\leq \xi(E)=\xi(A)+\xi(B)+1$. Thus one of $A$ or $B$ must have complexity at least 2. This component has at most 2 boundaries so it must have a genus or at least 3 punctures allowing us to find a good curve.
\end{proof}

\begin{lemma}
    Suppose $\xi(\Sigma)\geq 10$ and $\Delta$ is a triangulation of a good loop $\gamma\subset C\Sigma$. Then there is some $\Delta'$ a triangulation with good vertices of $\gamma$ which is homotopic rel $\gamma$ to $\Delta$ in $B_1(\Delta)$. 
\end{lemma}

\begin{proof}\renewcommand{\qedsymbol}{$\blacksquare$}
We begin by using \autoref{getgoodlinkSigma} to find a homotopic triangulation $\Delta'$ so that every bad vertex is isolated and has only good vertices in its link. Now naturally we want to replace the bad vertices. We will do this by moving the link of bad vertices to a subsurface of $\Sigma$ and then replacing the bad vertex with a good curve in the complementary subsurface.

Let $w$ be a bad vertex in the triangulation $\Delta'$. This means $\Sigma \setminus w$ has 2 non-pants components which we call $L$ and $R$. Let $R$ be the component with higher complexity. Notice we must have $\xi(R) \geq 5$. Our goal is to ensure that $\text{link}(w)$ lies entirely in $R$. Let $y_i$ be a curve in the link which does not. In other words, suppose $y_i \in CL$. 

Now that all the bad vertices have good links, we move these links into a large enough subsurface. Recall $\Sigma \setminus w=\{L,R\}$ where WLOG $\xi(R)\geq 5$. Say $y_i\in \text{link}(w)$ is such that $y_i\in CL$. If there exists a curve in $CR$ that is disjoint to both $y_{i-1}$ and $y_{i+1}$, then we can add this to our triangulation. If there is no such curve then by the complexity assumption we may use 3.10 in \cite{wright2024} to produce a path in $CR$ from $y_{i-1}$ to $y_{i+1}$ of good vertices and we are done.

Now we have a triangulation where the link of every bad vertex $w$ is contained in a subsurface of $\Sigma$. Moreover this subsurface is such that its complement is non-trival (what we labeled $L$ above which is non-pants by assumption). Thus we can replace $w$ with a good curve in the complement.

We may do this procedure to all bad vertices and get a homotopic triangulation where the homotopy is in $B_1(\Delta)$.
    
\end{proof}

\begin{lemma}
    Suppose $\xi(\Sigma)\geq 10$ and $\Delta$ is a triangulation with good vertices of a good loop $\gamma\subset C\Sigma$. Then there is some $\Delta'$ a triangulation with good edges of $\gamma$ which is homotopic rel $\gamma$ to $D$ in $B_1(\Delta)$. 
\end{lemma}
\begin{proof}\renewcommand{\qedsymbol}{$\blacksquare$}
This version is easier than the cone one. Take a bad edge $x,y$. $\Sigma\setminus \{x,y\}$ is two components, $L,R$ which are neither pants. The edge must be a common edge to a pair of triangle, say $(a, x, y)$ and $(b, x, y)$. If $a,b$ are in the same component, pick a curve in the other component that is good and makes good edges with $a,b,x,y$.

Otherwise, WLOG $b\in CR, ~\xi(R)\geq 4$ and so on a component of $R\setminus b$ there is a good curve that makes good edges with $a,b,x,y$.
\end{proof}
\end{proof}

\subsection{Almost simply connectedness}\label{almostsimplyconnected}
In this section, we apply the inductive steps to prove almost simply connectedness of spheres.

We begin by recalling the definition of almost simply connected. 

\begin{definition}\label{asc}
    A subspace of a metric space $U\subset X$ is called $N$-almost simply connected if for any loop $\gamma\subset U $, there is some continuous $f:D^2\to X$ such that $f(S^1)=\gamma$ and $f(D^2)\subset B_N(U)$ the $N$-neighborhood of $U$. We say $U$ is almost simply connected if there is some $N$ such that it is $N$-almost simply connected.
\end{definition}

Here $M$ is the constant from BGIT.

\begin{corollary}\label{inductivestep!2}
Let $\Sigma$ be a surface with $\xi(\Sigma) \geq 8$. Suppose for every surface $\Upsilon$ with $\xi(\Upsilon) = \xi(\Sigma) - 1$ we have that $C \Upsilon \setminus B_r$ is $N$-almost simply connected for every $B_r$. 

Now let $z \in S_r \subset C\Sigma$ be a good curve and let $U = S \setminus z$. Then for any $X \subset S_{r+1}$ which is a good cone with respect to $z$, there is a triangulation $\Delta$ of $X$ with good edges. Moreover we have $\Delta \subset C U \subset C \Sigma$ and $\Delta \subset S_{r+1}$.
\end{corollary}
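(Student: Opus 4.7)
The plan is to push the good cone $X$ far from $o$ inside $CU$ using \autoref{lem:induction3}, cap off the pushed loop with a disk supplied by the inductive hypothesis on $U$, repair that disk so its edges are good in $\Sigma$ via \autoref{Ccsimplycon}, and finally check via \autoref{bgit} that every new vertex lands in $S_{r+1}$.

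First, I would fix a constant $K$ much larger than $M + N + 5$, where $M$ is the constant from \autoref{bgit}. Iterating \autoref{lem:induction3} $K$ times produces a family of good homotopic cones $X = Y_0, Y_1, \dots, Y_K \subset S_{r+1}$ with $\ssn[U]{y} \geq K$ for every vertex $y$ of $Y_K$. Each cone-to-cone homotopy is a triangulated annulus made of good triangles, so concatenating them gives a single triangulated annulus $A$ with boundary $X \sqcup Y_K$, lying in $S_{r+1} \cap CU$, and with all edges good in $\Sigma$.

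Second, since $\xi(U) = \xi(\Sigma) - 1$, the inductive hypothesis applies to $U$. The loop $Y_K \subset CU$ sits outside the ball of $CU$-radius $K$ around $\pi_U(o)$, so it bounds a triangulated disk $D_0 \subset CU$ with $\ssn[U]{v} \geq K - N$ for every vertex $v$ of $D_0$. The triangulation $D_0$ need not have good edges in $\Sigma$, so I would next apply \autoref{Ccsimplycon} inside $CU$ to obtain a homotopic triangulation $D \subset CU$ of $Y_K$ whose edges are good in $\Sigma$, still satisfying $\ssn[U]{v} \geq K - N - 5$ for every vertex $v$.

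Third, by our choice of $K$, every vertex $v$ of $D$ has $\ssn[U]{v} > M$. \autoref{bgit} then forces every $C\Sigma$-geodesic from $v$ to $o$ to contain a curve disjoint from $U$; the only such curve is $z$ itself, so the geodesic factors through $z$ and $d(v, o) = d(v, z) + d(z, o) = 1 + r$, placing $D \subset S_{r+1}$. Gluing $A$ to $D$ along $Y_K$ produces a triangulated disk $\Delta$ with boundary $X$, all edges good in $\Sigma$, $\Delta \subset CU$, and $\Delta \subset S_{r+1}$.

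The main obstacle is budgeting constants: each of the two repair steps (inductive filling, edge fixing) costs us a bounded distance inside $CU$, and the BGIT argument in the final step needs a prescribed surplus over $M$. The cone lemma gives us a uniform way to pay this cost up front by pushing $X$ outward by any desired amount. A secondary point is that applying \autoref{Ccsimplycon} in $CU$ requires the complexity hypothesis there; this is what the inductive use of the corollary is ultimately calibrated to.
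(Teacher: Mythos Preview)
Your proposal is correct and follows essentially the same route as the paper: push $X$ out in $CU$ via the cone lemma to a loop $Y$ with $d_U(Y,o)>M+N+5$, fill $Y$ in $CU$ using the $N$-almost simple connectivity hypothesis, repair the filling to have good edges via \autoref{Ccsimplycon}, and glue to the annulus from the homotopy. Your write-up is in fact more explicit than the paper's in spelling out the final \autoref{bgit} step that places the repaired disk in $S_{r+1}$.
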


\begin{proof}
Let $X$ a cone as above. By \autoref{lem:induction3} there is a homotopic cone $Y\subset S_{r+1}$, and $d_U(Y,o)>M+N+5$ where the homotopy has only good triangles and lives in $S_{r+1}$. 
Now
$Y\subset CU$ and $\xi(U)=\xi(\Sigma)-1$, so let $\Delta$ be a triangulation of $Y$ in $CU$ with $d_U(\Delta,o) > M + 5$ by the assumption. Finally by \autoref{Ccsimplycon} there is some $\Delta'$ a triangulation of $X$ with good edges in $CU$ with $d_U(\Delta',o)>M$. 

Combining the triangulation from the homotopy and $\Delta'$ gives the desired triangulation with good edges.

% Fix $S$. We are assuming for the inductive step that $B^o_V(M,c)$ is simply connected for $\xi(V)<\xi(S)$. Since $\xi(U)<\xi(S)$, we have that $B^o_U(M,c)=O_M(z)$ is simply connected.

% Since $d_U(X,c)>M$, we have that for all $i, ~x_i\in O_M(z)$. So there is a good triangulation $D$ of $X$ where $D\subset O_M(z)$ and we are done.
\end{proof}

% \begin{lemma}
%     Let $\Sigma$ be a surface and let $\Upsilon$ be the surface found by cutting out some curves on $\Sigma$. Note that $\Upsilon$ may have multiple connected components and boundary components. Suppose there is at least one component of $\Upsilon$ with complexity greater than or equal to 5.
%     Suppose $\gamma = \{q_j\}$ is a path in $C\Upsilon$. Then there exists $\gamma'$ homotopic to $\gamma$ in $B_2(\gamma)$ such that $\gamma'$ consists of good vertices and edges.
    
%     there exists a homotopic path $\gamma'$ in $B_2(\gamma)$ which comprises of good vertices and edges.
% \end{lemma}
% \begin{proof}
%     Let $q_j$ be a bad vertex in $\gamma$. Consider $\Upsilon \setminus q_j$. If $q_{j-1}, q_{j+1}$ are such that there exists some $q_j' \in C\Upsilon$ adjacent to them, then we can replace $q_j$ with $q_{j}'$ in the path.

%     The only case where this is not easily done is if $\Upsilon \setminus q_j$ consists of exactly 2 non-pants components, say $L$ and $R$, with $q_{j-1} \in CL$ and $q_{j+1} \in CR$ (in other cases either there are more than 2 non-pants components or there are exactly 2 non-pants components but both $q_{j-1}$ and $q_{j+1}$ are on the same one).

%     There are two ways this case could occur: it's possible that $\Upsilon$ had exactly one non-pants component and $q_j$ is separating on it. 
    
% \end{proof}

\begin{theorem}\label{ozoz}
    Let $\xi(\Sigma)\geq 10$. Suppose for any $\Upsilon$ satisfying $\xi(\Upsilon)=\xi(\Sigma)-1$ and any $B_r\subset C\Upsilon$ that $C\Upsilon\setminus B_r$ is almost simply connected, and suppose any $B_R\subset C\Sigma$ is $N$-almost simply connected. Then for any $r\geq 3$, and loop $\gamma \subset S_r \subset C\Sigma$, there is a triangulation $\Delta\subset B_{r+N+7}\setminus B_{r-3}$ of $\gamma$.
\end{theorem}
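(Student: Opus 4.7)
The plan is to begin with any triangulation of $\gamma$ guaranteed by the hypothesis and then iteratively push it outward using the machinery of \autoref{lemmas}. Since $\gamma\subset S_r\subset B_r$ and balls in $C\Sigma$ are $N$-almost simply connected by hypothesis, $\gamma$ bounds a disk in $B_{r+N}$; take any simplicial triangulation $\Delta_0$ of this disk. Our task reduces to modifying $\Delta_0$ so that every interior vertex lands outside $B_{r-3}$, while the entire triangulation remains inside $B_{r+N+7}$.

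I would process interior vertices in order of increasing distance from $o$, repeatedly pushing the innermost vertex outward by one layer. Let $z$ be an innermost interior vertex at depth $d<r-2$. Its star is a triangulated disk whose boundary, the link of $z$, consists of neighbors of $z$ lying at depth $d$ or $d+1$. Since $z$ is innermost, any link vertex at depth $d$ is itself innermost; I would treat these cases first by working locally with the triangle and diamond pushing operations (\autoref{inductionlem1}, \autoref{inductionlem2}), together with the bad-triangle replacements \autoref{fixinglem} and \autoref{replacebad1}, to ensure the link of $z$ lies entirely in $S_{d+1}$.

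Now the star of $z$ is a cone over a loop $X\subset S_{d+1}$. Using the fixing lemmas once more, we replace each bad triangle $(z,y_i,y_{i+1})$ in the star by a configuration of good triangles, obtaining a good cone over $z$. At this point \autoref{inductivestep!2} applies: together with the inductive hypothesis that $C\Upsilon\setminus B_{r'}$ is almost simply connected for $\xi(\Upsilon)=\xi(\Sigma)-1$, it produces a triangulation $\Delta_L$ of $X$ with good edges lying in $S_{d+1}$ (and in $CU$ for $U=\Sigma\setminus z$). Replacing the star of $z$ in the current triangulation by $\Delta_L$ removes $z$ and produces a new triangulation of $\gamma$ whose innermost depth is strictly greater than $d$.

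Iterating, after finitely many pushes every interior vertex has depth at least $r-2$, so the triangulation avoids $B_{r-3}$. Each push only inserts vertices in $S_d$ and $S_{d+1}$ for $d\le r-3$, together with a small collar determined by the constants in the supporting lemmas, so the outer extent grows by at most the $+7$ buffer reflected in the statement. The main obstacle is orchestrating the fixing steps: bad triangles around $z$, adjacent innermost vertices, and the goodness hypotheses of \autoref{inductivestep!2} all have to be handled together, and the fixing sublemmas in \autoref{triangle} and \autoref{diamonds} must be invoked carefully so that restoring goodness at one triangle does not spoil a previously good neighbor.
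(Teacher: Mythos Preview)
Your high-level strategy---start with a disk for $\gamma$ and push the innermost simplices outward layer by layer using the triangle/diamond/cone machinery---is the same as the paper's. But there is a genuine gap at the very first step.

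All of the pushing tools you invoke (\autoref{inductionlem1}, \autoref{fixinglem}, \autoref{replacebad1}, \autoref{inductionlem2}, \autoref{inductivestep!2}) take as hypothesis that the simplices in question already have \emph{good edges} in $\Sigma$. Your $\Delta_0$ is an arbitrary simplicial disk in $B_{r+N}$; nothing ensures its edges are good, and \autoref{fixinglem} cannot be applied to a triangle whose edges are bad. Even $\gamma$ itself need not have good edges. The paper spends real work on this: it first homotopes $\gamma$ to a good-edge loop $\gamma'$ inside $B_{r+2}$, takes a disk for $\gamma'$ in $B_{r+N+2}$, and then applies \autoref{homotopins} to replace that disk by one with good edges at the cost of a further $+5$. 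This preprocessing is exactly where the $+7$ in the outer bound comes from; your claim that the $+7$ arises as ``a small collar determined by the constants in the supporting lemmas'' during pushing is incorrect, since the pushes at layers $k\le r-3$ only introduce vertices in $S_k\cup S_{k+1}\subset B_{r-2}$ and never touch the outer bound.

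A secondary issue: your vertex-centric scheme (``pick an innermost vertex $z$, fix its link, apply the cone step'') does not say what to do when several innermost vertices are adjacent, i.e.\ when there are edges or entire faces in $S_d$. The paper avoids this by processing each layer $S_k$ in a fixed order---first faces (\autoref{inductionlem1}), then edges (\autoref{inductionlem2}), then vertices (\autoref{inductivestep!2})---with a minimality argument at each stage. Your ``treat these cases first by working locally'' needs to be made into exactly such an ordered procedure; otherwise fixing the link of one innermost vertex can recreate deep simplices you thought you had removed.
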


\begin{proof} %[Proof of lemma ~\ref{ozoz}]
%Suppose that $B^o_S(M,c)$ is simply connected for all $S$ where $\xi(S)=7$. Then let $S$ be of complexity $\xi(S)\geq 8$. Figure~\ref{indpic} for the idea of the proof.

Let $\gamma=\{q_0,q_1,...q_n=q_0\}\subset S_r$. $\gamma$ may not have good edges so we will homotope $\gamma$ to a loop with good edges.

\begin{lemma}
    There is a loop $\gamma'$with good edges homotopic to $\gamma$ within $B_2(\gamma)$
\end{lemma}
\begin{proof} \renewcommand{\qedsymbol}{$\blacksquare$}
Suppose $q_i\in\gamma$ is a bad vertex and $L,R$ are the components of $\Sigma\setminus q_i$. If $q_{i-1},q_{i+1} \in CR$ (or $CL$), replace $q_i$ with $q_i'\in CL$ (respectively $CR$). If not, since $\xi(L)+\xi(R)+1=\xi(\Sigma)\geq 10$, WLOG suppose $q_{i-1}\in CR$ and $\xi(R)\geq 5$. Now suppose $R\setminus q_{i-1}$ is two components $D,G$. Since $\xi(D)+\xi(G)+1=\xi(R)\geq 5$, WLOG $\xi(G)\geq 2$. Since $G$ has at most two boundary components, there must be at least 3 punctures on $G$ or a genus so replace $q_{i}$ with a pants curve or a non-separating $q_i'$ on $G$, which is a good vertex, and $q_{i-1},q_i',q_i,q_{i+1}$ is a diamond. Repeating this for all bad vertices in $\gamma$ we get some $\gamma'$ which only has good vertices and is homotopic to $\gamma$ within $B_1(\gamma)$. 

% Now suppose $q_i,q_{i+1}$ is a bad edge but both are good vertices. Then $q_i,q_{i+1}$ are individually non-separating but jointly separating, and let $L,R$ be the components of $\Sigma\setminus \{q_i,q_{i+1}\}$. We have $10\leq \xi(\Sigma)=\xi(L)+\xi(R)+2$. Similar as above, if $q_{i-1},q_{i+1}$ are in the same component we are done, and if they are different components, WLOG $q_{i-1}\in CL$ and $\xi(L)\geq 4$. Then $\xi(L\setminus q_{i-1})=\xi(L)-1\geq 3$ and we are done since $L\setminus q_{i-1}$ has at most 4 boundary components so has a genus or at least 3 punctures. This new $q_i'$ has good edges with both $q_{i-1},q_{i+1}$ and $q_{i-1},q_i',q_i,q_{i+1}$ is a diamond, so we may repeat this for all bad edges and get a loop $\gamma''$ which is homotopic to $\gamma'$ within $B_1(\gamma')$ and so within $B_2(\gamma)$.

Now suppose $q_i, q_{i+1}$ is a bad edge but both are good vertices. Then consider $\Sigma \setminus (q_i, q_{i+1})$ which has 2 non-pants components $L$ and $R$. By complexity considerations, at least one of the components has complexity at least 4. An essentially non-separating curve in this component will be a good vertex forming good edges with $q_i$ and $q_{i+1}$.

\end{proof}

Since for all $k, B_k$ is $N$-almost simply connected, we may find a triangulation of $\gamma'$ within $B_{r+2+N}$. Then by \autoref{homotopins}, there is a triangulation in $D$ of $\gamma'$ in $B_{r+N+2+5}$ with good edges. Let $v(k, \Delta), e(k, \Delta), f(k, \Delta)$ denote the number of vertices, edges, and faces of $\Delta$ in $S_k$ and let $\sigma(k, \Delta)=v(k, \Delta)+e(k, \Delta)+f(k, \Delta)$. Choose $\Delta$ so that it is a triangulation of $\gamma'$ in $B_{r+N+2+5}$ with good edges and $\sigma(r - 3, \Delta)$ minimal. We show that this is zero, which would show that there is a good-edge triangulation of $\gamma'$ in $B_{r+N+7}\setminus B_{r-3}$ which then gives a triangulation of $\gamma$ in $B_{r+N+7}\setminus B_{r-3}$.

We will prove this by induction on $r$. For the base case, let $r=0$. Suppose for a contradiction that for any $\Delta$ a triangulation of $\gamma'$ with good edges in $B_{r+N+7}$ that $\sigma(0,\Delta)$ is positive. So let $D$ be such a triangulation such that $\sigma(0, \Delta)$ is positive.
Since $S_0$ is a point, only $v(k, \Delta)$ can be non-zero, and all edges of $\Delta$ must have one end not being the origin $o$. Let $\{z_i\}$ be the set of such vertices, as in $z_i=o$ and the link of $z_i$ in $\Delta$ is $Y_i=\{y_{i,j}\}\subset S_1$. There are finitely many triangles $z_i,y_{i,j},y_{i,j+1}$, so we may apply \autoref{fixinglem} finitely many times to the bad triangles in these cones, so that all the cones with respect to $z_i$ are good cones. Then we many apply \autoref{inductivestep!2} to these finitely many good cones to replace them with a triangulation with good edges of the link $Y_i$ in $S_1$. This gives a triangulation $\Delta'$ of good edges of $\gamma'$ in $B_{r+N+7}$ where $\sigma(0,\Delta')=0$, a contradiction.

Now for the inductive step suppose that for some $k\leq r-3$, that $\Delta$ can be chosen so that $\sigma(i-1, \Delta)=0$ for $i=1,...,k$. We show that $\Delta$ can be chosen so that $\sigma(k,\Delta)=0$.
For the following all the triangulations of $\gamma'$ we pick are in $B_{r+N+7}$ and have good edges.\\

\emph{Pushing off faces}: First, suppose $\Delta$ is chosen so that $f(k,\Delta)$ is minimal. We show that $f(k,\Delta)=0$. Suppose not: there is a face of $\Delta$ in $S_k$, say $x,y,z$. Then by \autoref{fixinglem} there's a $w\in S_k\cup S_{k+1}$ so that $(w,x,y),(w,x,z),(w,y,z)$ are good triangles. If $w\in S_{k+1}$ then we replace triangle $(x,y,z)$ with the three triangles $(w,x,y),(w,x,z),(w,y,z)$ and get a contradiction to minimality. If $w\in S_k$ then by \autoref{inductionlem1} there are $a,b,c\in S_{k+1}$ such that $(a,w,x,y), (b,w,x,z),(c,w,y,z)$ are good tetrahedra. Then replacing $x,y,z$ with the nine triangles $(a,x,y),(a,w,y),(a,w,x),(b,x,z),(b,x,w),(b,w,z),(c,y,z),(c,w,y),(c,w,z)$, we get a triangulation $\Delta'$ which has one less face in $S_k$ than $\Delta$, a contradiction to minimality.\\

\emph{Pushing off edges}: Now suppose that $\Delta$ is such that $f(k,\Delta)=0$ and $e(k,\Delta)$ is minimal. Suppose that $e(k,\Delta)$ is positive, and let $(x,y)$ be an edge in $S_k$. Since $(x,y)$ is in $S_k$ it cannot be on the boundary of $\Delta$, and since there are no faces of $\Delta$ in $S_k$, there must be $a,b$ such that $(a,x,y),(b,x,y)$ are triangles in $\Delta$ with $a,b\in S_{k+1}$. If either of these triangles are not good, by \autoref{replacebad1} we may replace this diamond with a collection of at most five diamonds with good triangles. This means we may assume that $(a,x,y),(b,x,y)$ are good triangles, so by \autoref{inductionlem2} there are points $a=x_0,...x_m=b$ so that we can replace the diamond $a,x,y,b$ with triangles $(x,x_i,x_{i+1}),(y,x_i,x_{i+1})$ so that there is one less edge in $\Delta$ in $S_k$, a contradiction.\\

\emph{Pushing off vertices}: Suppose $\Delta$ is such that $f(k,\Delta)=0=e(k,\Delta)$ and $v(k,\Delta)$ is minimal. Suppose for a contradiction that $v(k,\Delta)$ positive for all $\Delta$ such that $f(k,\Delta)=0=e(k,\Delta)$, and let $z\in S_k$ be a vertex of $\Delta$. Since there are no edges of $\Delta$ in $S_k$, the link of $z$ is in $S_{k+1}$. We may assume that all the triangles in the cone are good. For suppose not, and we will find some other triangulation $\Delta'$ with $f(k,\Delta) = 0 = e(k,\Delta)$ and every cone with respect to a vertex in $S_k$ has only good triangles.

So take $z,x,y$ a bad triangle in a cone with respect to $z$, and $x,y\in S_{k+1}$ as in \autoref{badcone}. Then by \autoref{fixinglem}, if the $w$ from the proposition is in $S_{k+1}$ we can replace $z,x,y$ with the three good triangles $(w,x,y),(w,x,z),(w,y,z)$ and the cone will be replaced with a good triangle in $S_{k+1}$ and a cone on $z$ that has one fewer bad triangle. If $w\in S_k$, then $(x,w,z),(y,w,z)$ forms a good diamond so they may be connected by \autoref{inductionlem2} with $x=x_0,...x_m=y$, so that the original cone is replaced with two cones, one of which is the good cone and the other is the original but with the edge $x,y$ replaced by $x_0,...x_m,w$, which has one fewer bad triangle. We repeat this process on every bad triangle to produce a triangulation where all the cones have only good triangles.

Suppose our cones are all of good triangles and $\Delta$ is such that $v(k,\Delta)$ is minimal and positive. Then by \autoref{lem:induction3} and \autoref{inductivestep!2} we may replace the cone with a triangulation in $S_{k+1}$ which contradicts minimality of vertices in $S_k$. Here, we are using the assumption that $C\Upsilon\setminus B_r$ is almost simply connected.

This gives us a triangulation in $B_{r+N+7}\setminus B_{r-3}$ of $\gamma'$ and hence of $\gamma$.
\end{proof}

\section{Base Case}\label{basecase}

Now we must show for any $\xi(\Sigma)\geq 10$ that for any $\Upsilon$ satisfying $\xi(\Upsilon)=\xi(\Sigma)-1$ and any $B_r\subset C\Upsilon$ that $C\Upsilon\setminus B_r$ is almost simply connected, and that any $B_R\subset C\Sigma$ is $N$-almost simply connected for some $N$. So it suffices to show for any $\Sigma$ with $\xi(\Sigma)\geq 9$ and any $B_r\subset C\Sigma$ that $C\Sigma\setminus B_r$ is almost simply connected \autoref{simcon}, and that there is some $N$ depending only on $\Sigma$ so that $B_r$ is $N$-almost simply connected \autoref{ball}.

\begin{theorem} [\autoref{ball}]
    For $\xi(\Sigma)\geq 4$, there is some $M=M(\Sigma)$ such that any $B_r$ is $M$-almost simply connected.
\end{theorem}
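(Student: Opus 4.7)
The plan is to use two facts about the curve complex: Masur--Minsky Gromov hyperbolicity with some constant $\delta = \delta(\Sigma)$, and simple connectivity of $C\Sigma$ (which holds once $\xi(\Sigma) \geq 2$). I aim to show $M = O(\delta)$ suffices.

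Given a loop $\gamma = (x_0, x_1, \dots, x_n = x_0) \subset B_r$, for each $x_i$ I would pick an edge-geodesic $\eta_i$ from $o$ to $x_i$; since $d(o, x_i) \leq r$, each $\eta_i$ lies in $B_r$. The $\eta_i$ together with the edges of $\gamma$ decompose a ``cone over $\gamma$'' into geodesic triangles $T_i$ with sides $\eta_i$, $\eta_{i+1}^{-1}$, and $(x_i, x_{i+1})$. Fillings of each $T_i$ inside $B_{r+M}$ glue along the $\eta_i$ to produce a filling of $\gamma$, so the task reduces to filling geodesic triangles of perimeter at most $2r + 1$.

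To fill each such $T_i$, I would use the $\delta$-thin property: every vertex on $\eta_i$ is within $\delta$ of some vertex on $\eta_{i+1} \cup (x_i, x_{i+1})$. Parameterizing both long sides vertex-by-vertex yields a coarse fellow-travel correspondence, which I would use to subdivide $T_i$ into ``ladder cells'', each of which is a short loop of length at most $4\delta + 4$. Since $C\Sigma$ is simply connected and Gromov hyperbolic, short loops satisfy a linear isoperimetric inequality, so each ladder cell bounds a disk of diameter $O(\delta)$. All these sub-disks lie within distance $r + O(\delta)$ of $o$, so $M = O(\delta)$ works.

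The hardest part will be executing the fellow-travel correspondence near the ``center'' of $T_i$, where $\eta_i$ and $\eta_{i+1}$ drift apart, and near the endpoints, where the matching may involve the short side $(x_i, x_{i+1})$ rather than $\eta_{i+1}$; these situations produce degenerate ladder cells in which several vertices on one side pair with a single vertex on the other. Such cells remain short loops and can still be filled by the same isoperimetric argument, but tracking the constants to ensure all sub-fillings stay inside $B_{r+M}$ is the main technical bookkeeping required.
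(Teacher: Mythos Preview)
Your high-level strategy---cone $\gamma$ by geodesics, use $\delta$-thinness to decompose each geodesic triangle into ladder cells of bounded perimeter, then fill each ladder cell---matches the paper's exactly (the paper cones from $x_0 \in \gamma$ rather than from $o$, but this is immaterial). The gap is in the last step: you write that ``short loops satisfy a linear isoperimetric inequality, so each ladder cell bounds a disk of diameter $O(\delta)$.'' The linear isoperimetric inequality bounds the \emph{area} (number of $2$-cells) of a filling, not its diameter; a filling of area $O(\delta)$ could in principle wander far from the loop, and in a non-locally-finite complex like $C\Sigma$ nothing in ``simply connected $+$ Gromov hyperbolic'' alone forces a uniform filling-radius bound for short loops. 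This is precisely the nontrivial content of the theorem, and you are assuming it rather than proving it.

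The paper supplies this missing ingredient as \autoref{coollemma}: any loop $X \subset C\Sigma$ bounds a disk $\Delta$ with $d_\Sigma(\Delta, X) < L(\operatorname{diam}(X)+1)$ for a constant $L = L(\Sigma)$. Its proof is where the real work lies. One first passes to Teichm\"uller space via the short-curve map $\Phi$ and uses fellow-travelling of Teichm\"uller geodesics to produce a mesh for $X$ whose triples have uniformly bounded intersection number (\autoref{bosmcsc1}). One then exploits the cofiniteness of the $\operatorname{Mod}(\Sigma)$-action: there are only finitely many orbits of triples with intersection number below a given bound, so one fixes a filling disk for a representative of each orbit and transports it by the action, yielding a uniform diameter bound $\kappa$ on all the small fillings. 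Your claimed $M = O(\delta)$ should be replaced by this $L(\Sigma)$; the resulting constant depends on $\Sigma$ through more than just the hyperbolicity constant.
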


We first recall some definitions and results from Teichm\"uller theory.

% Jan 22: Kasra writes a very nice theorem

\begin{definition}
%[definition from \cite{minsky96} page 6, or \cite{bourque21} (I think the one here is the same as Minsky's, please check)]
    Let $\Sigma$ be a surface. Let $\calT(\Sigma)$ be the Teichm\"uller space of $\Sigma$. Then the extremal length of a curve $a \in C\Sigma$ with respect to a point $\sigma \in \mathcal{T}(\Sigma)$ is
    \begin{align*}    \Ext_\sigma(a)=\sup_{\sigma'\in[\sigma]}\frac{\inf_{a'\in[a]}\big(\ell_{\sigma'}(a')\big)^2}{\Area(\sigma')}
    \end{align*}
where $[\sigma]$ is the class of metrics which are conformally equivalent to $\sigma$.
    % We define the extremal length systole of a point $X\in \mathcal{T}(S)$ as \begin{align*}
        % sys_{EL} (X)=\inf_{\alpha\in C(S)} Ext_X(\alpha)
    % \end{align*}
    
    % and say that a curve $\alpha$ has the shortest extremal length on $X$ if $Ext_X(\alpha)=sys_{EL}(X)$.
\end{definition}

\begin{definition}\label{Phi}
    Fix $L > 0$. Let $\Phi=\Phi_L:\mathcal{T}(\Sigma)\to \calP(C\Sigma)$ be the map taking a $\sigma \in\mathcal{T}(S)$ to the set of simple closed curves whose extremal length with respect to $\sigma$ is less than $L$. 
\end{definition}

\begin{proposition}\label{collection}
For any $L>0, \sigma \in\mathcal{T}(\Sigma)$, $\Phi(\sigma)$ is a finite (possibly empty) collection of simple closed curves in $C\Sigma$ with diameter bounded by $2L+1$.
\end{proposition}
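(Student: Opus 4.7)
The proposition has two essentially independent parts: showing that $\Phi(\sigma)$ is finite, and bounding its diameter in $C\Sigma$ by $2L+1$. I would handle them separately. For finiteness, the natural tool is the standard comparison between extremal length and hyperbolic length. Letting $\rho$ denote the unique finite-area hyperbolic metric in the conformal class of $\sigma$, there is a constant $C = C(\sigma)$ with $\ell_\rho(\alpha) \leq C \sqrt{\Ext_\sigma(\alpha)}$ for every simple closed curve $\alpha$. Thus every $\alpha \in \Phi(\sigma)$ has $\ell_\rho(\alpha) \leq C\sqrt{L}$, and only finitely many simple closed geodesics lie below any fixed length threshold on a finite-area hyperbolic surface. (If $\Phi(\sigma)$ is empty or a singleton, the diameter claim is vacuous, so we may as well assume it has at least two elements.)

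For the diameter bound the key ingredient is the classical inequality
$$i(\alpha, \beta)^2 \;\leq\; \Ext_\sigma(\alpha) \, \Ext_\sigma(\beta),$$
proved by using the conformal modulus of a regular annular neighborhood of $\beta$ as a test metric against representatives of $\alpha$ in the conformal class. Applied to any pair $\alpha, \beta \in \Phi(\sigma)$, this yields the intersection bound $i(\alpha, \beta) \leq L$. I would then invoke the standard elementary surgery bound $d_{C\Sigma}(\alpha, \beta) \leq 2 i(\alpha, \beta) + 1$: when $i(\alpha, \beta) = 0$ the curves are disjoint and $d \leq 1$; otherwise an essential boundary component of a regular neighborhood of $\alpha \cup \beta$, or a surgery at a single intersection point, produces an intermediate curve with strictly smaller intersection with $\beta$, and one iterates on the intersection number. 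Combining the two estimates gives $\mathrm{diam}(\Phi(\sigma)) \leq 2L + 1$.

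The main conceptual ingredient is the extremal-length/intersection-number inequality, which is classical in Teichm\"uller theory and would be quoted rather than reproved; the finiteness statement and the surgery distance bound are also standard. The content of the proposition is essentially the explicit numerical packaging of these three facts into the stated $2L+1$ constant, and the work of the proof is just their assembly.
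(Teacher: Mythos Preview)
Your proposal is correct and follows essentially the same route as the paper: finiteness via the hyperbolic-length/extremal-length comparison (the paper makes the constant explicit as $\sqrt{\mathrm{Area}(\rho)}$ and spells out an Arzel\`a--Ascoli argument where you quote the standard finiteness of short geodesics), and the diameter bound via $i(\alpha,\beta)^2 \le \Ext_\sigma(\alpha)\Ext_\sigma(\beta)$ combined with $d_{C\Sigma}(\alpha,\beta)\le 2i(\alpha,\beta)+1$, both cited from \cite{MMI}.
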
 
\begin{proof}
    This proposition is a collection of results from ~\cite{bourque21} and ~\cite{MMI}.
Fix $L>0$, we claim the number of curves with extremal length less than $L$ is finite. Suppose for a contradiction that it were infinite, and let $b$ be a curve such that its extremal length is less than $L$. Let $\rho$ be the hyperbolic metric on $\Sigma$ conformally equivalent to $\sigma$. Let $\beta$ be the unique geodesic closed curve in the homotopy class of $b$.
    
    Then \begin{align*}
        L>\Ext_\sigma(b)&\geq\frac{\inf_{b'\in[b]}\ell_\rho^2(b)}{\Area(\rho)}\\
        &= \frac{\ell_\rho^2(\beta)}{\Area(\rho)}\\
        \ell_\rho(\beta)&\leq\Big(\Area(\rho)\Ext_\sigma(b)\Big)^{\frac{1}{2}}\\
        &<\Big(\Area(\rho)L\Big)^{\frac{1}{2}}
    \end{align*}

    This means that there are infinitely many closed geodesic curves with hyperbolic length less than some constant that depends only on $\Sigma$ and $L$, so let $K=\left(\Area(\rho)L\right)^{\frac{1}{2}}$.

    We have an infinite collection of constant speed parametrizations $f_i:S^1\to \Sigma=\Sigma_{g,b}$ whose images are non homotopic geodesic curves with length less than $K$.

    But then this family is a subset of continuous functions between two compact metric spaces (we may remove the cusps of $\Sigma$ since no non-trivial geodesic curve passes within a neighborhood of a cusp), and this family is uniformly bounded and equicontinuous (the equicontinuity follows from the fact that the geodesics have bounded length so $\abs{f_i'} \leq K/2\pi$ hence the $f_i$ are even Lipschitz with the same Lipschitz constant). Thus by Arzela-Ascoli \cite[p.~290]{munkres00} there's a subfamily which converges uniformly. But this means the images of this subfamily eventually lie in an annulus and so are homotopic, a contradiction.

    So finitely many curves have extremal length less than $L$, which means there either are none or there exists a shortest one (possibly more than one).

    For the uniformly bounded diameter, fix $L,\Sigma$ and $\sigma \in\mathcal{T}(\Sigma)$. Then let $a, b \in \Phi(\sigma)$. We have by \cite{MMI} (elementary fact in lemma 2.5) that $i(a, b)^2 \leq \Ext_\sigma(a)\Ext_\sigma(b)<L^2$. Then $d_\Sigma(a,b)\leq 2i(a,b)+1<2L+1$ by ~\cite{MMI} lemma 2.1. 
\end{proof}

As we are building triangulations to construct a homotopy to show simple-connectedness, we define a few concepts related to a triangulation.

\begin{definition}
Let $\Delta=\{\Delta_i\}_{i=1}^n$ be a finite triangulation of the closed disk $D^2$, where $V(\Delta)=\cup_iV(\Delta_i)=\cup_i\{x_i,y_i,z_i\}$ are the vertices, and $E(\Delta)=\cup_iE(\Delta_i)=\\ \cup_i\{[x_i,y_i],[x_i,z_i], [y_i,z_i]\}$ are the edges. 

In the following, $Y$ is $\RR^n$, a curve complex, or Teichm\"uller space and fix some loop $X=\{x_i\}\subset Y$. Then a set $Z\subset Y$ is called a \emph{triangulation} of $X$ if there's some continuous map $f:D^2\to Y$, and $f(S^1)=X,~f(V(\Delta))=Z$.

$Z$ is called a \emph{mesh} of $X$ if there's some continuous map $f:S^1\cup V(\Delta)\to Y$, and $f(S^1)=X, ~f(V(\Delta))=Z$. 

$Z$ is called a \emph{coarse triangulation} of $X$ if there's some continuous map $f:S^1\cup E(\Delta)\cup V(\Delta)\to Y$, and $f(S^1)=X, ~f(V(\Delta))=Z$. 

% In the following, $Y$ is $\RR^n$, a curve complex, or Teichm\"uller space and fix some loop $X=\{x_i\}\subset Y$. Then a set $Z\subset Y$ is called a
% \begin{itemize}
%     \item \textbf{triangulation} of $X$ if there's some continuous map $f:D^2\to Y$, and $f(S^1)=X,~f(V(\Delta))=Z$.
%     \item \textbf{mesh} of $X$ if there's some continuous map $f:S^1\cup V(\Delta)\to Y$, and $f(S^1)=X, ~f(V(\Delta))=Z$. 
%     \item \textbf{coarse triangulation} of $X$ if there's some continous map $f:S^1\cup E(\Delta)\cup V(\Delta)\to Y$, and $f(S^1)=X, ~f(V(\Delta))=Z$. 
% \end{itemize}

Given a mesh or coarse triangulation or triangulation and $f$, $\{x,y,z\}\subset Y$ is called a \emph{triple} of $Y$ if there's some $i$ such that $f(V(\Delta_i)) = \{x, y, z\}$. $\{x,y\}\subset Y$ is called a \emph{pair} of $Y$ if there's some $\{a,b\}\in V(E(\Delta))$ such that $f(\{a,b\}) = \{x,y\}$.

The \emph{mesh size} of $Z$ given $f$ is $\max_i\{\diam(f(V(\Delta_i)))\}$, which makes sense for triangulations, coarse triangulations, and meshes. 
\end{definition}

\begin{proposition}\label{triangulate}
    If $(X,d)$ is a simply connected metric space then for any $\epsilon>0$ any loop $\gamma$ there is a triangulation $\Delta$ of $\gamma$ with mesh size $<\epsilon$.
\end{proposition}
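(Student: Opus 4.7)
The plan is to leverage simple connectivity to produce a continuous extension of $\gamma$ to the whole disk, and then to pull back a sufficiently fine Euclidean triangulation of $D^2$ under this extension. There is essentially no geometric difficulty; the argument is a straightforward uniform-continuity argument.

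First, view the loop $\gamma$ as a continuous map $S^1 \to X$. Since $(X, d)$ is simply connected, there is a continuous extension $f : D^2 \to X$ with $f|_{S^1} = \gamma$, so in particular $f(S^1) = \gamma$ as subsets of $X$. The map $f$ is continuous on the compact space $D^2$, hence uniformly continuous: given the fixed $\epsilon > 0$, pick $\delta > 0$ such that whenever $p, q \in D^2$ satisfy $|p - q| < \delta$, we have $d(f(p), f(q)) < \epsilon$.

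Next, choose a finite triangulation $\Delta = \{\Delta_i\}$ of $D^2$ whose every simplex has Euclidean diameter strictly less than $\delta$, and which triangulates $S^1$ as a subcomplex (so that the boundary vertices of $\Delta$ lie on $S^1$). Any sufficiently fine iterated subdivision of the standard triangulation of $D^2$ (e.g., barycentric subdivision applied enough times) produces such a $\Delta$. Now set $Z := f(V(\Delta))$. By construction, $f$ together with $\Delta$ exhibits $Z$ as a triangulation of $\gamma$ in the sense defined earlier. For each triangle $\Delta_i$, the three vertices of $V(\Delta_i)$ are pairwise within $\delta$ in $D^2$, hence their images under $f$ are pairwise within $\epsilon$ in $X$, giving $\diam(f(V(\Delta_i))) < \epsilon$. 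Taking the maximum over the finitely many $i$ shows that the mesh size of $Z$ is less than $\epsilon$.

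The only point requiring any care is the compatibility between the abstract triangulation $\Delta$ of $D^2$ and the parametrization of $\gamma$: we need $\Delta$ to induce a simplicial subdivision of $S^1$ so that $V(\Delta) \cap S^1$ provides vertices on $\gamma$ respected by $f$. This is standard for any PL triangulation of $D^2$ and poses no obstacle. Hence no step is genuinely hard; the proposition reduces to uniform continuity of $f$ on the compact disk.
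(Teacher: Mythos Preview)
Your proof is correct and follows essentially the same approach as the paper: take a continuous extension $f:D^2\to X$ of $\gamma$ using simple connectivity, invoke uniform continuity of $f$ on the compact disk to get $\delta$, and pull back a triangulation of $D^2$ with mesh smaller than $\delta$. Your added remark about $S^1$ being a subcomplex is a harmless clarification the paper leaves implicit.
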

\begin{proof}
    Since $X$ is simply connected, let $F:D^2\to X$ be a homotopy $\gamma\simeq_h const$. Since it's a continuous function on a compact set it is uniformly continuous, so we have that there's some $\delta>0$ such that if $|x-y|<\delta$ then $d(F(x),F(y))<\epsilon$. Then take a triangulation $\Gamma$ of $D^2$ with mesh size $<\delta$.
    Look at $F(\Gamma)$ which is a triangulation by definition, and take any triple $\{F(\alpha),F(\beta),F(\gamma)\}$, and note by uniform continuity that $\diam(\{F(\alpha),F(\beta),F(\gamma)\})<\epsilon$ since $\diam(\{\alpha,\beta,\gamma\})<\delta$.
\end{proof}
% The above theorem requires that we find preimages under $\Phi$ of arbitrary curves that will be in the thick part of Teichm\"uller space. See \cite{bourque21} for a start on constructing points in the thick part with certain extremal length systoles.

% Also, to apply Kasra's theorem C, we need that the distance 1 points connecting one preimage to the next are all in the thick part.

% If that cannot be achieved, then we would need to investigate if theorem C can be extended so that only one of the point need to be in the thick part.
\begin{definition}
    Let $Thick(\epsilon)=\mathcal{T}_\epsilon\subset\mathcal{T}(\Sigma)$ be the subset of points $\sigma$ in Teichm\"uller where every closed curve on $\sigma$ has extremal length at least $\epsilon$. 
    % and $Thin(\epsilon)$ be the complement of $Thick(\epsilon)$.
\end{definition}

Markings, and more generally the marking complex, are a useful tool for studying Teichm\"uller space. These were first introduced in \cite{MMII} although \cite{duchin10} also gives a nice summary of results and ideas. 

\begin{lemma}\label{bibddd}
    Pick any marking $\mu = \{(\mu_i, t_i)\}_{i = 1}^{3g - 3 + b}$ and $M>0$. Then $B_\mu=\{\sigma \in \mathcal{T}(S) | \Ext_{\sigma}(\mu_i) \leq M, \Ext_{\sigma}(t_i) \leq M\}$ has bounded diameter. In fact, $B_\mu$ is in $Thick(\epsilon)$ where $\epsilon=\frac{1}{M}$.
\end{lemma}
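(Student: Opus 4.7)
The plan is to prove the two claims in order, using the thickness statement as a stepping stone to the bounded-diameter statement.

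For the containment $B_\mu \subset Thick(1/M)$, I would use the inequality $i(\alpha,\beta)^2 \leq \Ext_\sigma(\alpha)\Ext_\sigma(\beta)$ already cited in the proof of \autoref{collection}, together with the observation that a marking fills $\Sigma$. Given any essential simple closed curve $\alpha$: if $\alpha$ is isotopic to some $\mu_i$, then $i(\mu_i,t_i) \geq 1$ combined with $\Ext_\sigma(t_i) \leq M$ forces $\Ext_\sigma(\mu_i) \geq 1/M$; otherwise, since $\{\mu_i\}$ is a pants decomposition, $\alpha$ must intersect some $\mu_j$, and $i(\alpha,\mu_j) \geq 1$ together with $\Ext_\sigma(\mu_j) \leq M$ gives $\Ext_\sigma(\alpha) \geq 1/M$. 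Either way every essential curve has extremal length at least $1/M$ on $\sigma$, which is the definition of $\sigma \in Thick(1/M)$.

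For bounded diameter, I would pass to Fenchel--Nielsen coordinates based on the pants decomposition $\{\mu_i\}$ and exhibit $B_\mu$ as a bounded subset of these coordinates. Having already established that $B_\mu$ sits in the thick part, extremal length and hyperbolic length are two-sided comparable on $B_\mu$ with constants depending only on $M$ and $\Sigma$ (by a standard Maskit-type comparison). Hence each length coordinate $\ell_\sigma(\mu_i)$ is trapped in a fixed compact interval, and similarly $\ell_\sigma(t_i) \leq C(M,\Sigma)$ for every $i$. Since the $t_i$'s together witness every twist direction in the marking, and since on the locus where the pants lengths are bounded the length of each $t_i$ is a proper function of the twist parameters $\tau_j$ with $i(t_i,\mu_j) > 0$, the uniform upper bound on $\ell_\sigma(t_i)$ forces every $\tau_j$ into a bounded interval. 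So $B_\mu$ maps into a compact subset of the Fenchel--Nielsen coordinate space, which is therefore contained in a compact subset of $\mathcal{T}(\Sigma)$, and hence has bounded Teichm\"uller diameter.

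The main obstacle is the twist-parameter bound: a single transversal $t_i$ may intersect several of the $\mu_j$'s, so $\ell_\sigma(t_i)$ depends on all of their twists simultaneously rather than just $\tau_i$. The argument decouples these by using the full collection $\{t_i\}$ of transversals. The key geometric input is that, with the pants lengths fixed in a compact range, $\ell_\sigma(t_i) \to \infty$ whenever any twist $\tau_j$ with $i(t_i,\mu_j) > 0$ tends to $\pm\infty$, a standard computation in a collar of $\mu_j$. Granting this, bounded transversal lengths pin down every twist coordinate and the compactness argument goes through.
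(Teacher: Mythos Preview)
Your thickness argument is essentially identical to the paper's: both use that the marking fills, pick a curve $b$ from the marking meeting a given curve $a$, and apply $i(a,b)^2 \le \Ext_\sigma(a)\Ext_\sigma(b)$ together with $\Ext_\sigma(b)\le M$.

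For the bounded-diameter part you take a genuinely different route. The paper argues directly with Kerckhoff's extremal-length formula for Teichm\"uller distance: for $\sigma_1,\sigma_2\in B_\mu$ and any curve $a$, choose a marking curve $b$ meeting $a$; using the intersection inequality twice (once for $a$ and once for $b$, with the transversal of $b$ supplying a lower bound on $\Ext_{\sigma_1}(b)$) they bound $\Ext_{\sigma_1}(a)/\Ext_{\sigma_2}(a)\le M^2$, hence $d_{\mathcal T}(\sigma_1,\sigma_2)\le \log M$. This is short and yields an explicit bound depending only on $M$, not on the marking or on $\Sigma$. Your approach instead passes to Fenchel--Nielsen coordinates: use Maskit's comparison in the thick part to convert the extremal-length constraints into hyperbolic-length bounds, pin down the length coordinates, and then use a properness argument for the transversal lengths to pin down the twist coordinates, concluding by compactness. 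This is correct, and your handling of the twist issue is fine (contributions to $\ell_\sigma(t_i)$ from different collars are nonnegative and add, so a single $\tau_j\to\pm\infty$ already forces $\ell_\sigma(t_j)\to\infty$). The cost is that your bound depends on the Maskit constants and on the Fenchel--Nielsen model, hence a priori on $\Sigma$, and the argument is considerably longer than the two-line extremal-length computation the paper gives.
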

\begin{proof}
    Suppose $M$ is large enough so that $B_\mu$ is non-empty (for instance we could take $M$ to be Bers' constant).
    The first part is more or less from \cite{duchin10} page 733 or 12. The fact that it has bounded diameter is true for every $M$. Suppose not. Let $\sigma_1, \sigma_2 \in B_\mu$ and $a$ is any simple closed curve. There is some $b$ in the marking that intersects $a$ which means $\Ext_{\sigma_j}(a) \Ext_{\sigma_j}(b) \geq i(a,b)^2 \geq 1$ for $j=1,2$. Then $\frac{\Ext_{\sigma_1}(a)}{\Ext_{\sigma_2}(a)} \leq \frac{\Ext_{\sigma_2(b)}}{\Ext_{\sigma_1(b)}} \leq M^2$. Hence $B_\mu$ has bounded diameter with this bound only depending on $M$ (and not on the marking).

    For the second part, fix $\sigma\in B_\mu$ and suppose there is a curve $a\in C\Sigma$ such that $\Ext_\sigma(a)<\epsilon$. Since $\mu$ is a complete marking, $a$ intersects some base curve or transverse curve in the marking, say $b\in \mu$. Then $\Ext_x(b)\Ext_x(a)\geq i(a,b)^2\geq 1$ gives that $\Ext_x(b)\geq\frac{1}{\Ext_x(a)}>\frac{1}{\epsilon}=M$, a contradiction.
\end{proof}

% It seems a bit silly to have a lemma if the proof is just pointing to another paper?

% \begin{lemma}\label{teichbdd}
%     Let $\mu, \nu$ be two markings that differ by an elementary move. Then $d_\mathcal{T}(B_\mu, B_\nu)<D$ where $D=D(S)$ is a constant that depends only on $S$.
% \end{lemma}
% \begin{proof}
%     This is \cite{duchin10} page 734 or page 13. Though it is a bit unclear why this $D$ only depends on $S$.
% \end{proof}

\begin{lemma}\label{closebythick}
    Let $S$ be a surface and $\mathcal{T}(S)$ be its Teichm\"uller space. If $x\in Thick(\epsilon)$ and $d_\mathcal{T}(x,y)\leq M$ then $y\in Thick(\epsilon')$ where $\epsilon'= e^{-2M}\epsilon$.
\end{lemma}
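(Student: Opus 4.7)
The plan is to use the standard fact that Teichm\"uller distance is defined via quasiconformal dilatation, together with the quasi-invariance of extremal length under quasiconformal maps. Recall that the Teichm\"uller distance is
\[
d_{\mathcal{T}}(x,y) = \tfrac{1}{2}\log K(x,y),
\]
where $K(x,y)$ is the infimum of dilatations of quasiconformal maps between $x$ and $y$ in the appropriate homotopy class. In particular the hypothesis $d_{\mathcal{T}}(x,y)\leq M$ gives a quasiconformal map $f:x\to y$ with dilatation $K \leq e^{2M}$.

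The key classical fact I would invoke is that extremal length is $K$-quasi-invariant under $K$-quasiconformal maps: for every simple closed curve $a$,
\[
K^{-1}\,\Ext_x(a) \;\leq\; \Ext_y(a) \;\leq\; K\,\Ext_x(a).
\]
I would then apply this to an arbitrary curve $a$. By the assumption $x\in Thick(\epsilon)$ we have $\Ext_x(a)\geq \epsilon$, and combining with the lower bound above yields
\[
\Ext_y(a) \;\geq\; K^{-1}\Ext_x(a) \;\geq\; e^{-2M}\epsilon \;=\; \epsilon'.
\]
Since $a$ was arbitrary, this shows $y\in Thick(\epsilon')$, as required.

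There is essentially no obstacle here: the lemma is a direct consequence of the definition of Teichm\"uller distance and the quasi-invariance of extremal length. The only thing to be careful about is the constant in front of $\log K$ in the definition of $d_{\mathcal{T}}$ (some authors use $\log K$ instead of $\tfrac{1}{2}\log K$), but the exponent $2M$ in the statement is consistent with the convention $d_{\mathcal{T}} = \tfrac{1}{2}\log K$, so no adjustment is needed. I would just cite a standard reference (e.g.\ Hubbard or Farb--Margalit) for the quasi-invariance of extremal length and write out the two-line estimate above.
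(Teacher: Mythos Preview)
Your proposal is correct and essentially the same as the paper's proof. The paper argues by contradiction using Kerckhoff's formula $d_{\mathcal{T}}(x,y)=\tfrac{1}{2}\log\sup_\alpha \Ext_x(\alpha)/\Ext_y(\alpha)$, which is exactly the packaged form of the quasi-invariance of extremal length you invoke; both arguments yield the same inequality $\Ext_y(a)\geq e^{-2M}\Ext_x(a)$.
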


\begin{proof}
    Suppose not, and there is a simple closed curve $\gamma\subset S$ with $\Ext_y(\gamma)<\epsilon'$. Then we have
    \begin{align*}
        d_\mathcal{T}(x,y)=d_\mathcal{T}(y,x)&=\frac{1}{2}\log\sup_{\alpha\in C_0}\frac{\Ext_x(\alpha)}{\Ext_y(\alpha)}\\
        &\geq \frac{1}{2}\log\frac{\Ext_x(\gamma)}{\Ext_y(\gamma)}\\
        M&\geq\frac{1}{2}\log\frac{\Ext_x(\gamma)}{\Ext_y(\gamma)}\\
        {\Ext_y(\gamma)}e^{2M}&\geq \Ext_x(\gamma)>\epsilon\\
    \end{align*} 

a contradiction.
\end{proof}

% Define $\Phi: \mathcal{T}(S)\to \mathcal{P}C(S)$ as the map from \cite{MMI} (Theorem 2.3 page 5), which takes $x$ to the set of curves of shortest extremal length $\Ext_x$. For each $F_i\in F$, let $q_{F_i}$ be the quadratic differential associated to $F_i$ and let $q_{F_i}(t)$ be the geodesic in $T(S)$ associated to it. Now define as in \cite{MMI} (page 25) $F_{i,q}(t):=\Phi(q_{F_i}(t))$, which is a quasigeodesic in $C(S)$ by Lemma 2.5 on that page. So for each $i$, pick some $x_i\in F_{i,q}(t)$ such that $d(x_i,c)>M+C$ for some $C$ to be determined. Also, for each $i$ pick a $y_i\in \Phi^{-1}(x_i)$, and let $Y=\{y_i\}$.

% Given $Y$ as above, pick enough points in $T(S)$ so that we get a triangulation of the disk made by $Y$ with mesh $< 1$ that is also far from $\Phi^{-1}(c)$. Call this finer triangulated mesh $Y'$. Now I claim that $\Phi(Y)$ is a collection of points that contains a triangulation where each triangle has bounded intersection number, which means $i(\{x,y,z\}):= \max\{ i(x,y),i(x,z),i(y,z) \}< N$. 

The following lemma shows that if surfaces are close in Teichm\"uller space, then their images in $C\Sigma$ have bounded intersection number.
\begin{lemma}\label{boundint}
    If $d_{\calT}(x,y)< B$ then $i(\{\Phi(x)\cup\Phi(y)\})<N=N(B,\Sigma)$.
\end{lemma}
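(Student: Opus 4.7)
The plan is to combine two standard ingredients: Kerckhoff's formula for the Teichm\"uller metric, which controls how extremal lengths can change under a Teichm\"uller distance bound, and the elementary inequality $i(a,b)^2 \leq \Ext_\sigma(a)\Ext_\sigma(b)$ that has already been used in \autoref{collection} and \autoref{bibddd}.

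First I would observe that by the Kerckhoff formula (which is exactly the identity used in the proof of \autoref{closebythick}),
\begin{equation*}
    d_{\calT}(x,y) = \tfrac{1}{2}\log\sup_{\alpha \in C\Sigma} \frac{\Ext_x(\alpha)}{\Ext_y(\alpha)},
\end{equation*}
so the hypothesis $d_{\calT}(x,y) < B$ yields $\Ext_y(\alpha) \leq e^{2B}\Ext_x(\alpha)$ and symmetrically for every simple closed curve $\alpha$. In particular, for any $a \in \Phi(x)$ we have $\Ext_x(a) \leq L$ by definition, hence $\Ext_y(a) \leq e^{2B}L$. The same bound, with the roles reversed, applies to curves $b \in \Phi(y)$. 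Consequently every curve in $\Phi(x) \cup \Phi(y)$ has extremal length at most $e^{2B}L$ at both basepoints.

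Now I would apply the elementary bound $i(\alpha,\beta)^2 \leq \Ext_\sigma(\alpha)\Ext_\sigma(\beta)$ (cited from \cite{MMI} in the proof of \autoref{collection}) at either of the basepoints $\sigma = x$ or $\sigma = y$. For any two curves $\alpha,\beta \in \Phi(x)\cup\Phi(y)$ this gives
\begin{equation*}
    i(\alpha,\beta)^2 \leq \Ext_x(\alpha)\Ext_x(\beta) \leq (e^{2B}L)^2,
\end{equation*}
so setting $N = e^{2B}L$ (which depends only on $B$, $L$, and $\Sigma$, and recalling that $L$ is the fixed constant entering the definition of $\Phi$) yields $i(\alpha,\beta) \leq N$ for every pair in $\Phi(x)\cup\Phi(y)$.

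This argument is essentially a bookkeeping exercise once Kerckhoff's formula is invoked, so there is no real obstacle; the only mildly delicate point is being clear about the interpretation of the statement $i(\{\Phi(x)\cup\Phi(y)\}) < N$ as a pairwise bound on intersection numbers between the (finitely many, by \autoref{collection}) curves in $\Phi(x)\cup\Phi(y)$. If instead one wishes to bound the total geometric intersection number of the whole multicurve, one simply multiplies by $\#\Phi(x) + \#\Phi(y)$ squared, and the finiteness and uniform bound on the cardinality of $\Phi(\sigma)$ in terms of $\Sigma$ (implicit in the proof of \autoref{collection} via Arzel\`a--Ascoli) absorbs this into a new constant $N = N(B,\Sigma)$.
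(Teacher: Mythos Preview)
Your proof is correct and follows essentially the same approach as the paper: both use Kerckhoff's formula to transfer the extremal length bound from one basepoint to the other, then apply the inequality $i(\alpha,\beta)^2 \leq \Ext_\sigma(\alpha)\Ext_\sigma(\beta)$ at a single basepoint. The only cosmetic difference is that the paper uses a constant $e_0(\Sigma)$ (effectively Bers' constant) in place of your $L$, but since $L$ is ultimately fixed in terms of $\Sigma$ these amount to the same thing; your treatment of the pairwise bound is in fact slightly cleaner than the paper's.
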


 \begin{proof}
% We want to prove that if they are both in $\Phi(x)$ or both in $\Phi(y)$ then they are bounded intersection from each other.
The key idea is that if two curves intersect many times then at least one of them must be quite long due to the collar lemma.

Pick any $\alpha_1,\alpha_2\in \{\Phi(x)\cup\Phi(y)\}$.
Suppose $\alpha_1\in \Phi(x), \alpha_2\in\Phi(y)$, we wish to bound their intersection number (if both $\alpha_i$ lie in exactly one of $\Phi(x)$ or $\Phi(y)$ then we are done by . By definition of $\Phi$, this means $\Ext_x(\alpha_1)=\inf_{\alpha} \Ext_x(\alpha)\leq e_0=e_0(\Sigma)$, and similarly, $\Ext_y(\alpha_2)=\inf_{\alpha} \Ext_y(\alpha)\leq e_0=e_0(\Sigma)$. Here, $e_0(\Sigma)$ is a number depending on $\Sigma$ such that all extremal length systoles of any curve and metric on $\Sigma$ is not more than $e_0(\Sigma)$. For example, we could take $e_0(\Sigma)$ to be the Bers' constant for $\Sigma$.

We have by (2.1) on page 4 of \cite{MMI}

 \begin{align*}
    B>d_T(x,y)&=\frac{1}{2}\log\sup_{\alpha\in C_0}\frac{\Ext_y(\alpha)}{\Ext_x(\alpha)}\\
    &\geq \frac{1}{2}\log\frac{\Ext_y(\alpha_1)}{\Ext_x(\alpha_1)}
\end{align*}

So we get $\Ext_y(\alpha_1)<e^{2B} \Ext_x(\alpha_1)\leq e^{2B} e_0$

Then by the proof of Lemma 2.5 (page 6 in \cite{MMI}), since we have both $\Ext_y(\alpha_1)<e^{2B}e_0$ and $\Ext_y(\alpha_2)<e_0$ by picking $N=e^{2B} \cdot e_0$ we get $i(\alpha_1,\alpha_2) \leq e^{2B} e_0$. \end{proof}

\begin{proposition}\label{fact}
    We recall a fact: $d_\Sigma(x,y)\leq 2\log i(x,y)+2$. By above lemma, we note that if $d_\calT(x,y)<B $, then $d_\Sigma(\Phi (x),\Phi(y))\leq 2\log i(\Phi (x),\Phi(y))+2\leq 2\log (e^{2B}e_0)+2=4B+2\log e_0+2$
\end{proposition}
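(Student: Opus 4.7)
The plan is to observe that this proposition is essentially a one-line synthesis of two independent inputs: a distance-intersection estimate in the curve complex, together with the intersection bound from \autoref{boundint}. So I do not expect any genuine obstacle here; the work is just in justifying the two pieces and composing them.

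First, I would invoke the standard Masur--Minsky fact (Lemma 2.1 of \cite{MMI}) that for any two simple closed curves $x, y$ on $\Sigma$ which intersect, $d_\Sigma(x,y) \leq 2\log i(x,y) + 2$. This is the ``recalled fact'' in the statement and requires no new argument; I would just cite it. One small subtlety: this inequality is stated for pairs of curves, so when I apply it to $\Phi(x)$ and $\Phi(y)$ I should really be taking a representative curve from each set (which is why \autoref{collection} is useful, since it tells us $\Phi(x)$ and $\Phi(y)$ are each of bounded diameter).

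Next, I would apply \autoref{boundint}, which gives $i(\alpha_1, \alpha_2) \leq e^{2B} e_0$ for any $\alpha_1 \in \Phi(x)$ and $\alpha_2 \in \Phi(y)$, under the hypothesis $d_{\calT}(x,y) < B$. Picking such representatives $\alpha_1, \alpha_2$ and plugging this intersection bound into the curve-complex distance estimate yields
\[
d_\Sigma(\alpha_1, \alpha_2) \leq 2\log i(\alpha_1, \alpha_2) + 2 \leq 2\log(e^{2B} e_0) + 2 = 4B + 2\log e_0 + 2.
\]
Since $\Phi(x)$ and $\Phi(y)$ each have diameter bounded by $2L + 1$ (by \autoref{collection}), the distance $d_\Sigma(\Phi(x), \Phi(y))$ differs from $d_\Sigma(\alpha_1, \alpha_2)$ by at most this additive constant, which can be absorbed by adjusting the final constant. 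So the inequality as written follows directly. The only mild care required is in the degenerate case where the chosen representatives happen to be disjoint or equal, in which case $d_\Sigma \leq 1$ and the bound is trivial — I would briefly note this case.
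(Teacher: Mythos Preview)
Your proposal is correct and matches the paper's approach exactly: the proposition in the paper is itself just the one-line synthesis of the Masur--Minsky distance-intersection bound with the intersection estimate from \autoref{boundint}, and there is no separate proof in the paper beyond what is written in the statement. Your added remarks about choosing representatives from $\Phi(x),\Phi(y)$ and handling the disjoint case are reasonable extra care that the paper elides.
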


\begin{theorem}\label{bosmcsc1}
    Fix a surface $\Sigma$. Then there exists a constant $K=K(\Sigma)$ such that for any loop $X\subset C\Sigma$, there is a mesh $\Delta^*$ of $X$ such that for each triple $\{\beta_1,\beta_2,\beta_3\}$ of $\Delta^*$, $i(\beta_i,\beta_j)<K$ and $d_\Sigma(\Delta^*,X)< K\cdot (\diam (X)+1)$.
\end{theorem}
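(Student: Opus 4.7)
The plan is to lift the loop $X$ to a loop $\tilde X$ in Teichm\"uller space $\mathcal{T}(\Sigma)$, use the simple connectivity of $\mathcal{T}(\Sigma)$ to fill $\tilde X$ by a disk that I can triangulate with small mesh, and then project back to $C\Sigma$ via the map $\Phi = \Phi_L$ from \autoref{Phi}. Throughout, I would fix $L$ to be at least Bers' constant, so that $\Phi(\sigma)$ is nonempty for every $\sigma \in \mathcal{T}(\Sigma)$.

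First I would build $\tilde X$. Writing $X = \{x_0, \ldots, x_n = x_0\}$, for each $i$ the curves $x_i$ and $x_{i+1}$ are disjoint, so I extend $\{x_i, x_{i+1}\}$ to a pants decomposition and pick $\sigma_i \in \mathcal{T}(\Sigma)$ making every curve in that decomposition short; then $\{x_i, x_{i+1}\} \subset \Phi(\sigma_i)$. Since consecutive $\sigma_i$ and $\sigma_{i+1}$ share the short curve $x_{i+1}$, applying \autoref{bibddd} to markings extending the two pants decompositions gives that $d_\mathcal{T}(\sigma_i, \sigma_{i+1})$ is bounded in terms of $\Sigma$ alone. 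Joining each consecutive pair by a Teichm\"uller geodesic segment produces $\tilde X$. Iterating the same bounding argument along a $C\Sigma$-geodesic between arbitrary vertices $x_i, x_j$ of $X$ yields $\diam_\mathcal{T}(\tilde X) \leq C_1 (\diam_\Sigma(X) + 1)$ for some $C_1 = C_1(\Sigma)$.

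Next I would fill $\tilde X$ with a continuous $F : D^2 \to \mathcal{T}(\Sigma)$ whose image lies in a ball of radius $C_2 \diam_\mathcal{T}(\tilde X) + C_2$ around a fixed basepoint of $\tilde X$, for instance by coning to the basepoint along Teichm\"uller geodesic segments. By uniform continuity of $F$ on $D^2$, as in the proof of \autoref{triangulate}, I can subdivide $D^2$ into a triangulation $\Delta$ whose image triangles each have $\mathcal{T}$-diameter less than $1$. For every vertex $v$ of $\Delta$ I then pick $\beta_v \in \Phi(F(v))$, choosing $\beta_v = x_i$ whenever $v$ is a boundary vertex mapping to some $\sigma_i$, so that the resulting mesh sends $S^1$ to $X$ as required.

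For each triangle of $\Delta$ with vertices $v_1, v_2, v_3$ the pairwise Teichm\"uller distances $d_\mathcal{T}(F(v_j), F(v_k))$ are at most $1$, so \autoref{boundint} bounds the intersection numbers $i(\beta_{v_j}, \beta_{v_k})$ by $N(1, \Sigma)$. Every $F(v)$ sits within distance $C_2(\diam_\Sigma(X) + 1) + C_2$ of $\tilde X$ in $\mathcal{T}(\Sigma)$, so \autoref{fact} places every $\beta_v$ within $O(\diam_\Sigma(X) + 1)$ of $X$ in $C\Sigma$. Setting $K$ to the maximum of these constants gives the theorem. The hard step is securing the linear diameter bounds for both $\tilde X$ and its filling: since $\mathcal{T}(\Sigma)$ is not CAT(0) once $\xi(\Sigma) \geq 2$, the cone-to-basepoint filling must be justified using the coarse geometry of the marking complex together with some care about how Teichm\"uller geodesics traverse the thin part.
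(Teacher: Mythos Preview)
Your overall architecture---lift to $\mathcal{T}(\Sigma)$, triangulate with small Teichm\"uller mesh, push forward by $\Phi$ and invoke \autoref{boundint}---matches the paper's. The gap is in how you control distances, and it is a real obstruction rather than a technicality.

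The claim that $d_{\mathcal T}(\sigma_i,\sigma_{i+1})$ is bounded by a constant depending only on $\Sigma$ is false. Two points of $\mathcal T(\Sigma)$ can share a single short curve $x_{i+1}$ and still be arbitrarily far apart: act by a large power of a pseudo-Anosov supported on $\Sigma\setminus x_{i+1}$. \autoref{bibddd} only says that a \emph{single} complete marking pins down a bounded region; it says nothing about two markings that agree on one base curve. Consequently the inequality $\diam_{\mathcal T}(\tilde X)\le C_1(\diam_\Sigma(X)+1)$ fails as well---Teichm\"uller distance is in general vastly larger than curve-complex distance, and your lift has no reason to avoid this. Your own final paragraph already flags the worry, but the fix is not cosmetic: the linear bound you want simply does not hold.

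The paper sidesteps this by never attempting to bound the Teichm\"uller diameter of the lift. It first replaces your $\sigma_i$ by a much longer sequence coming from a \emph{marking path} between consecutive markings (so that genuinely consecutive terms differ by an elementary move and are a bounded $\mathcal T$-distance apart). It then cones from a fixed $\sigma_0$ by Teichm\"uller geodesics $g_j$ and uses Rafi's fellow-travelling (Theorem~C of \cite{rafi14}) to make each sliver $g_j\cup\gamma^j\cup g_{j+1}$ uniformly thin, hence triangulable with mesh $<1$. The linear bound $d_\Sigma(\Delta^*,X)\le K(\diam X+1)$ then comes not from \autoref{fact} applied to a Teichm\"uller distance, but from the Masur--Minsky fact that $\Phi$ sends each Teichm\"uller geodesic $g_j$ to an unparametrised quasigeodesic in $C\Sigma$, whose endpoints lie near $X$; so $\diam_\Sigma\Phi(g_j)$ is linear in $\diam_\Sigma(X)$ regardless of how long $g_j$ is in $\mathcal T(\Sigma)$. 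That quasigeodesic property is the missing ingredient in your argument.
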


\begin{proof}
    Fix $L>0$ and let $\Phi=\Phi_L:\mathcal{T}(\Sigma)\to\mathcal{PC}(\Sigma)$ be the map from definition \autoref{Phi}. Our main idea is to `pull back' $X$ to a loop in $\calT(\Sigma)$, triangulate the loop there and then push forward this triangulation back to $C\Sigma$. In order to begin therefore, for each $\alpha_i \in X$, we want that there's some $\sigma_i$ in the $\epsilon$-thick part of $\mathcal{T}(\Sigma)$ such that $\alpha_i\in \Phi(\sigma_i)$, where $\epsilon$ only depends on $L$.

    For each edge $\{\alpha_i, \alpha_{i + 1}\}$, construct a marking $\mu_i$ which contains $\alpha_i, \alpha_{i+1}$ as base curves. Construct a marking path $\{\mu_{i, j}\}$ from $\mu_i$ to $\mu_{i+1}$ as in \cite{MMII}. For each $\{\mu_{i, j}\}$ define $B_{i, j} := \{\sigma \in \mathcal{T}(\Sigma) | \ell_\sigma(\mu_{i, j}) \leq L\}$. Then the marking path defines a sequence of points in $\mathcal{T}(\Sigma)$ by sending $\mu_{i, j}$ to any point in $B_{i, j}$. Label this sequence $\{\sigma_j\}$ which lies in $\mathcal{T}_\epsilon$ for $\epsilon = 1/L$ by \autoref{bibddd}. Since consecutive $\mu_{i, j}$ differed by elementary moves, it follows that $d_{\mathcal{T}}(\sigma_j, \sigma_{j+1})$ is bounded as was shown in \cite{duchin10}. Let $\gamma$ be the loop in $\mathcal{T}(\Sigma)$ formed by connecting $\sigma_j$ to $\sigma_{j+1}$ via Teichm\"uller geodesics. This is the `pull back' of $X$ we sought.

    Now we triangulate the loop in $\calT(\Sigma)$.
    Let $g_j$ be the Teichm\"uller geodesic from $\sigma_0$ to $\sigma_j$. By Theorem C in \cite{rafi14}, $g_j$ fellow travels $g_{j-1}, g_{j+1}$.
    % (which means for our fixed $\epsilon$, we get that these geodesics $D(\epsilon)$-fellow travel; I actually need that $D$ depends only on $L$ and $\Sigma$)
    Denote by $\gamma^i$ the segment on $\gamma$ between $\sigma_i$ and $\sigma_{i+1}$. Then $g_j,\gamma^j,g_{j+1}$ form the edges of a geodesic triangle in $\mathcal{T}$, which has the property that the two edges $g_j,g_{j+1}$ are not more than $D$ apart. By \autoref{triangulate} there's a triangulation $\Delta_j$ of this triangle with mesh size $<1$, where every vertex of $\Delta_j$ is within distance $D$ of one of the vertices on the edges $g_j,g_{j+1}$. Let $\Delta=\cup_j\Delta_j$. We can decompose $\Delta$ into $\Delta=\{e_i\}\sqcup\{f_j\}$ where $e_i$ are points on the boundary $\gamma$ and $f_j$ are not. To be sure, we can have it so that $\{\sigma_i\} \subset \{e_i\}$. 
    
    We can use $\Phi$ to send $\Delta$ to get a mesh in $C\Sigma$. For each $i,j$, pick points $\eta_i,\nu_j$ from $\Phi(e_i),\Phi(f_j)$, such that the $\{\alpha_i\}\subset\{\eta_i\}$. For each edge $[\delta_i,\delta_j]$ in $\Delta$, suppose $\psi_i,\psi_j$ are the points we picked from $\Phi(\delta_i),\Phi(\delta_j)$. Then fix a geodesic in the curve complex $[\psi_i,\psi_j]$. Now $\cup_{[\delta_i,\delta_j]\in E(\Delta)}[\psi_i,\psi_j]$ gives a coarse triangulation $\Delta^*$ of $X$. Actually, we will forget the edges of this coarse triangulation so that $\Delta^*$ is the mesh we seek.\\
    
    We verify that this triangulation $\Delta^*$ satisfies all the properties in the theorem. Pick any triple $\{x,y,z\}\subset \Delta^*$. This triple is in the image of $\Phi(\xi),\Phi(\zeta),\Phi(\rho)$ where $\xi,\zeta,\rho$ were a triple in $\Delta$. Since $\Delta$ had mesh size $<1$, $\diam(\{\xi,\zeta,\rho\})<1$. Then by \autoref{boundint}, $i(\{x,y,z\})<N(1,\Sigma)$.

    Next we need to compute a bound for $d_{\Sigma}(a_0, b)$ where $a_0 \in X$ and $b \in \Delta^*$. First we verify this for $b \in \Delta^*$ that lie in $\Phi(g_j)$. By \cite{MMI}, we know that $\Phi$ takes geodesics to quasigeodesics and hence there exist constants $\kappa_1, \kappa_2$ such that
    $$d_\Sigma(b,a_0)\leq \diam_\Sigma(\Phi(g_j))\leq \kappa_1 \diam(X) + \kappa_2$$

    Now suppose we have an arbitrary $b \in \Delta^*$. Let $\beta$ be the point in $\Delta$ so that $b = \Phi(\beta)$. By construction we have that there is some $g_j$ so $d_{\calT}(\beta, g_j) < D$. So let $\lambda$ be the point in $g_j \cap V(\Delta)$ so that $d_{\calT}(\beta, \lambda) < D$. Thus we can apply \autoref{boundint} to get $d_{\Sigma}(\Phi(\beta), \Phi(\lambda)) < N(D, \Sigma)$. Combining this with the previous bound and using the triangle inequality we get 
    $$ d_{\Sigma}(b, X) \leq d_{\Sigma}(b, a_0) \leq \kappa_1 \diam(X)+\kappa_2 + N(D,\Sigma) $$
    
    % Suppose $b$ was the element we picked from $\Phi(\beta)$ for some $\beta \in \Delta$. Now we had that $d_\mathcal{T}(\beta,g_j)<D$ for some $j$, where $g_j$ is the geodesic $[x_0,x_j]$. So pick a point $\lambda\in g_j\cap V(\Delta)$ such that $d_\mathcal{T}(\beta,\lambda)<D$ and say $p$ is the point we picked in $\Phi(\lambda)$. Then $d_\Sigma(b,p)<N(D,\Sigma)$.\\

    % We verify that there is a constant $K(\Sigma)$ as in the theorem.
    
    % We know $\Phi$ sends geodesics to quasigeodesics by \cite[Lemma 5.1]{MMI}, which means in this case that there exists constants $\kappa_1, \kappa_2$ which depend only on $L,\Sigma$ such that $$d_\Sigma(p,a_0)\leq \diam_\Sigma(\Phi(g_j))\leq \kappa_1 \diam(X) + \kappa_2$$

    % Then by the triangle inequality, $d_\Sigma(b,X)\leq d_\Sigma(b,a_0)\leq d_\Sigma(b,p)+d_\Sigma(p,a_0)\leq N(D,\Sigma) + \kappa_1 \diam(X)+\kappa_2$
    
    Our theorem follows when we pick $K=\max\{N(1,\Sigma),\kappa_1, \kappa_2 + N(D,\Sigma)\} $. This can be made into a function of only $\Sigma$ by picking $L$ to be the Bers constant which depends only on $\Sigma$ so that $\Phi(x)\neq \emptyset,~\forall x\in\mathcal{T}$, see \cite[Theorem 12.8]{Farb11}.
\end{proof}

\begin{figure}[h]
\centering
\includegraphics[width=1\linewidth]{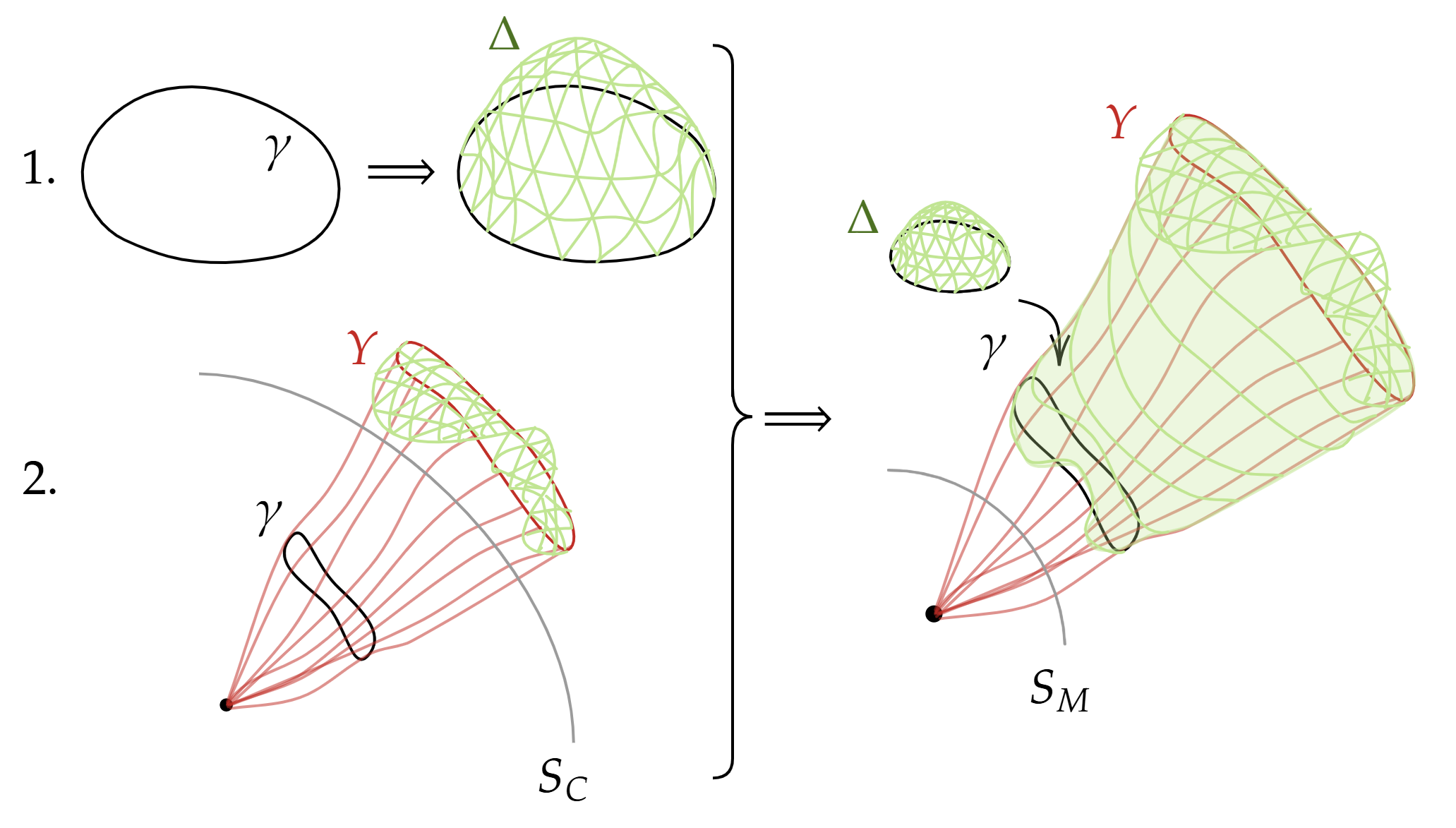}
\caption{Plan for base case}
\end{figure}   

\begin{lemma}\label{coollemma}
    Let $\xi(\Sigma) \geq 4$. Then there is some constant $L=L(\Sigma)$ such that for any loop $X\subset C\Sigma$ there is a triangulation $\Delta$ of $X$ with $d_\Sigma(\Delta,X) < L(diam(X)+1)$.
\end{lemma}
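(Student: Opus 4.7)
The plan is to apply \autoref{bosmcsc1} to obtain a mesh $\Delta^*$ of $X$ with $d_\Sigma(\Delta^*, X) < K(\diam(X)+1)$ and every triple of $\Delta^*$ having pairwise intersection number less than $K$, and then to upgrade this mesh to a genuine triangulation while paying only an additive constant $L_0 = L_0(\Sigma)$ in the distance bound.

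The technical heart would be the following claim: there is a constant $L_0$, depending only on $\Sigma$, such that every triple $\{a_1,a_2,a_3\} \subset C\Sigma$ with $i(a_i,a_j) < K$ for all $i,j$ admits a triangulation whose vertex set lies in the $L_0$-neighbourhood of $\{a_1,a_2,a_3\}$. I would deduce this from two observations. First, such triples form finitely many $Mod(\Sigma)$-orbits, since the topological type of three curves with pairwise intersection bounded by $K$ is determined by a finite amount of combinatorial data. Second, since $\xi(\Sigma) \geq 2$, the curve complex $C\Sigma$ is simply connected, so each orbit representative bounds at least one triangulation in $C\Sigma$. Taking $L_0$ to be the maximum of the diameters of a single chosen triangulation per orbit and pushing forward by mapping classes gives the claim for every triple.

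To assemble these per-triple fillings into a global triangulation, I would first extend $\Delta^*$ to the $1$-skeleton of the underlying disk triangulation $\Delta$ by mapping each edge to a $C\Sigma$-geodesic between its two endpoint mesh vertices; these endpoints lie in a common triple, so by \autoref{fact} they are close in $C\Sigma$. This produces a coarse triangulation in which each $2$-simplex has a prescribed boundary loop of bounded length whose three corner vertices are a bounded-intersection triple. I would then fill each $2$-simplex using the key claim, yielding a continuous map $D^2 \to C\Sigma$ that realises the desired triangulation. By construction, every new vertex lies within distance $L_0$ of a triple in $\Delta^*$, and that triple lies within $K(\diam(X)+1)$ of $X$, so $L = K + L_0$ suffices.

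The main obstacle is compatibility of the fillings across shared edges of $\Delta$: the per-triple fillings from the key claim involve arbitrary choices on the boundary, and adjacent $2$-simplices must agree on their common edge. My approach would be to fix the $1$-skeleton of $\Delta$ first, which reduces the filling problem to bounded-length loops in $C\Sigma$ with bounded pairwise-intersection corner vertices. A refined finiteness-of-orbits argument applied to these specific boundary loops then yields a uniform filling bound, possibly at the cost of enlarging $L_0$.
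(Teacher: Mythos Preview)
Your approach is essentially the paper's. Both start from the mesh $\Delta^*$ of \autoref{bosmcsc1}, use finiteness of $Mod(\Sigma)$-orbits of bounded-intersection triples, fill one representative per orbit using simple connectedness of $C\Sigma$, and take $L = K + \kappa$ where $\kappa$ is the maximal diameter over these finitely many fillings.

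The one place where your write-up is vaguer than the paper is exactly the obstacle you flag. Your proposed fix (``a refined finiteness-of-orbits argument applied to these specific boundary loops'') is not quite right as stated: if you choose an arbitrary geodesic for each edge of the mesh, the resulting boundary loops of the $2$-simplices need \emph{not} fall into finitely many $Mod(\Sigma)$-orbits, because non-adjacent vertices along such a loop can have arbitrarily large intersection number. The paper resolves this cleanly by making the edge choices themselves equivariant: first use finiteness of $Mod(\Sigma)$-orbits of \emph{pairs} with $i(x,y)<K$ to fix one geodesic $\gamma_{xy}$ per pair-orbit, so that every pair in $\Delta^*$ inherits a canonical edge via the group action. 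With edges chosen this way, the boundary loop of each triple is determined by the triple's orbit, hence there are finitely many loop-orbits, and one filling per orbit suffices. This is precisely the ``fix the $1$-skeleton first'' idea you gesture at, but the equivariance of the edge choice is the point that makes the finiteness go through.
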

\begin{proof}
    Let $\Sigma$ and $K$ as in \autoref{bosmcsc1}. 

    There are finitely many pairs of curves up to homeomorphism of intersection number less than $K$. For any such pair $x,y$, write $Vert(x,y)$ to mean the orbit $Mod(\Sigma)\cdot\{x,y\}\subset C\Sigma$. Pick one pair $\{x,y\}\in Vert(x,y)$. Let $\gamma_{xy}$ be a geodesic (of length $\leq 2\log K +2$). Write $Edge(x,y)$ to mean $Mod(\Sigma)\cdot \gamma_{xy}$.  
    
    Let $T_K$ be the set of triples $\{x,y,z\}$ where $i(\{x,y,z\})<K$. Define $$Triple(x,y,z)=Mod(\Sigma)\cdot\{x,y,z\}$$
    Pick a triple $\{x,y,z\}\in Triple (x,y,z)$. Now by above, we have some unique $\gamma_{xy}\in Edge(x,y)$ that is a geodesic from $x$ to $y$.
    
    Let $D_{x,y,z}$ be a triangulation of the loop $\gamma_{xyz}$ made from the concatenation of $\gamma_{xy},\gamma_{xz},\gamma_{zy}$. This exists because $C\Sigma$ is simply connected for $\xi(\Sigma) \geq 4$. Write $Disk(x,y,z)=Mod(\Sigma)\cdot D_{x,y,z}$. We have that because the $Mod(\Sigma)$ action is an isometry, all the elements of $Disk(x,y,z)$ have the same diameter, so $\diam (Disk (x,y,z))$ is well defined. Since there are finitely many such sets $Disk (x,y,z)$, Let $\kappa=\max_{T_K} \diam (Disk (x,y,z))$, which is a constant that only depends on $K$ and the surface $\Sigma$.

    Now pick $X$ any loop in $C\Sigma$ and $\Delta^*$ a mesh as in \autoref{bosmcsc1}.
    % Without loss of generality we can suppose $X \subset \Gamma$.
    For every pair $\{x,y\}\subset \Delta^*$, there's some unique $\gamma_{xy}\in Edge(x,y)$. Taking the union $\cup_{xy~pair~in~\Delta^*}~\gamma_{xy}$ gives us a coarse triangulation of $X$ since for adjacent points in $X$ we have that $\gamma_{xy}$ is the edge between them that is in $X$. Now for every loop $\gamma_{xyz}$ for triples $x,y,z\subset \Delta^*$, there's a unique $D_{x,y,z}\in Disk(x,y,z)$ that triangulates $\gamma_{xyz}$. Thus the union $\cup_{xyz~triple~in~\Delta^*}D_{x,y,z}$ is a triangulation $\Delta$ of $X$. Now, since these disks all have diameter $\leq\kappa$, we get that $d_\Sigma(\Delta,\Delta^*)\leq\kappa$. Also, $d_\Sigma(\Delta^*,X)\leq K(\diam(X)+1)$, so by the triangle inequality, $d_\Sigma(\Delta,X)\leq K(\diam(X)+1)+\kappa$, so we can take $L=K+\kappa$ and we are done.
    
\end{proof}

\begin{lemma}\label{importantlemma}
    Let $\xi(\Sigma) \geq 4$, $o\in C\Sigma$, $M>0$, and take any loop $X=\{x_0,...,x_n=x_0\}\subset C\Sigma$. For all $i$ let $\{\xi_i(t)\}_{t=1}^\infty$ be a geodesic ray starting at $o$ and passing through $x_i$. Then there is a loop $Y$, a family of geodesic rays $\{\gamma_j(t)\}_{t=1}^\infty \supset\{\xi_i(t)\}_{t=1}^\infty$, and some $r$ such that $\gamma_j(r)\in Y$ for all $j$, and a triangulation $\Delta$ of $Y$ such that $d_\Sigma(\Delta,o)>M$, and $\gamma_i,\gamma_{i+1}$ are $\delta$-fellow traveling up to $r$. %as in $d_\Sigma(\xi_i(t),\xi_{i+1}(t))\leq\delta$ for $t\in[0,r]$.
\end{lemma}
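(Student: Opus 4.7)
The approach will be to lift the problem to Teichm\"uller space in the spirit of \autoref{bosmcsc1} and \autoref{coollemma}, extend the lift radially via Teichm\"uller geodesic rays, and project back to $C\Sigma$ via the extremal length map $\Phi$. First I would pick a basepoint $\sigma_o\in\mathcal{T}_\epsilon(\Sigma)$ with $o\in\Phi(\sigma_o)$ and, for each $x_i$, a point $\sigma_i\in\mathcal{T}_\epsilon(\Sigma)$ with $x_i\in\Phi(\sigma_i)$. By the Masur-Minsky correspondence between Teichm\"uller geodesics and $C\Sigma$-geodesics from \cite{MMI}, each ray $\xi_i$ is shadowed by a Teichm\"uller geodesic ray $\tilde\xi_i$ from $\sigma_o$ so that $\Phi(\tilde\xi_i(t))$ fellow-travels $\xi_i$.

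Next, fix a large parameter $t$ to be chosen and set $\tau_i=\tilde\xi_i(t)$. Connect consecutive $\tau_i,\tau_{i+1}$ by a Teichm\"uller geodesic $g_i$ and sample each $g_i$ at unit Teichm\"uller distance to obtain intermediate points $\tau_{i,l}$. For each $\tau_{i,l}$, take a Teichm\"uller ray from $\sigma_o$ through it and project via $\Phi$ (adjusted by a bounded Masur-Minsky error to an actual $C\Sigma$-geodesic ray) to obtain rays $\gamma_{i,l}$; together with the $\xi_i$ these form the family $\{\gamma_j\}\supset\{\xi_i\}$. Define $r$ to be the common $C\Sigma$-distance from $o$ of the $\gamma_j$'s at parameter $t$ (equal to $t$ up to a Masur-Minsky constant), and let $Y$ be the cyclic concatenation of the $\gamma_j(r)$'s. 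The triangulation $\Delta$ is the $\Phi$-image of a triangulation of the Teichm\"uller region spanned by the $\tilde\xi_i$'s and the $g_i$'s, built exactly as in \autoref{bosmcsc1}.

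Fellow-traveling of consecutive $\gamma_i,\gamma_{i+1}$ up to $r$ follows because their Teichm\"uller lifts pass through points at unit Teichm\"uller distance, so by \autoref{boundint} their $\Phi$-shadows have bounded intersection and hence bounded $C\Sigma$-distance, which upgrades via hyperbolicity to $\delta$-fellow-traveling up to $r$. The vertices of $\Delta$ lie at $C\Sigma$-distance within a uniform constant of $t$, so choosing $t$ large enough forces $d_\Sigma(\Delta,o)>M$. The hard part will be to show that the diameter of $Y$ is controlled independently of $t$ (and hence of $r$) and that the fellow-traveling really holds up to $r$ rather than up to some $r-C$; both require careful tracking of the Masur-Minsky quasi-isometry constants together with the constants from \autoref{boundint}. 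As a backup one can apply \autoref{coollemma} directly to $Y$ and absorb the resulting triangulation error $L(\diam(Y)+1)$ into the choice of $r$.
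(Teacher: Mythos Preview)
Your proposal has a genuine gap at the central step: producing a triangulation $\Delta$ of $Y$ with $d_\Sigma(\Delta,o)>M$. You build $\Delta$ as the $\Phi$-image of a Teichm\"uller triangulation ``spanned by the $\tilde\xi_i$'s and the $g_i$'s, built exactly as in \autoref{bosmcsc1}.'' But the construction of \autoref{bosmcsc1} cones the loop to one of its own vertices via Teichm\"uller geodesics; applied to the loop $\bigcup g_i$ sitting at Teichm\"uller radius $t$, those geodesics can pass close to $\sigma_o$ (Teichm\"uller space is not hyperbolic, and even in a hyperbolic space geodesics between far-apart points on a large sphere typically dip toward the center). Hence the $\Phi$-image need not avoid $o$, and your assertion that ``the vertices of $\Delta$ lie at $C\Sigma$-distance within a uniform constant of $t$'' is unsupported. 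The backup of applying \autoref{coollemma} directly to $Y$ fails for the same reason: that lemma only bounds $d_\Sigma(\Delta,Y)$ by $L(\diam(Y)+1)$, and $\diam(Y)$ grows with $r$, so the filling can land near $o$. Your stated worry about bounding $\diam(Y)$ independently of $t$ is the wrong target and is in general impossible.

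The paper supplies the missing ingredient by working not in Teichm\"uller space but in the Gromov boundary $\partial C\Sigma$. The rays $\xi_i$ determine boundary points, which (using path-connectedness of $\partial C\Sigma$) are joined into a loop $\Xi\subset\partial C\Sigma$. The key input is that $\partial C\Sigma$ is \emph{simply connected} (Gabai), so \autoref{triangulate} gives a triangulation of $\Xi$ in $\partial C\Sigma$ with arbitrarily small mesh in the visual metric. By the defining property of the visual metric, vertices at visual distance less than $\frac{1}{C}a^L$ are represented by rays that $E$-fellow-travel past time $L$; choosing the mesh fine enough forces all rays through triangulation vertices to fellow-travel past any prescribed time $s$. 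Evaluating these rays at $s$ yields $Y$ and the enlarged ray family $\{\gamma_j\}$ with the required $\delta$-fellow-travelling, and now every triple of $Y$ has \emph{uniformly bounded diameter independent of $s$}. This is exactly what allows the mesh-and-disk filling (\autoref{bosmcsc1} plus the $Mod(\Sigma)$-orbit of fixed disks) to stay within a constant of $Y$ and hence far from $o$. The simple connectivity of $\partial C\Sigma$ is the essential idea your Teichm\"uller approach lacks.
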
 

\begin{proof}
The basic outline of the proof is as follows: we need to triangulate $X$ (or some homotopy of it) with points far away from the origin $o$. We can do so using the simply connectedness of the boundary of $C\Sigma$ and the hyperbolicity of $C\Sigma$. In particular, we can use geodesic rays to push $X$ to the boundary and triangulate it there. The visual metric on the boundary ensures that if points are nearby in the boundary then they fellow-travel for a long time. Thus by having a sufficiently fine triangulation on the boundary, we can get a mesh of points in $C\Sigma$ for $X$ with small mesh size that are all far from $o$. Since the mesh size is small, we can fill the triples in this mesh with small disks. All of this is made more precise below.\\

Let $o,X,\Sigma,M$, and
% Then $X$ is homotopic to a curve in $C_cS$ so we may assume $X$ is in $C_cS$. Then each vertex $x_i$ of $X$ lies on a 
$\xi_i(t)$ as above. The geodesic rays $\xi_i(t)$ define points $\xi_i\in \partial C\Sigma$. Since $\partial C\Sigma$ is path connected, we may find paths $[\xi_i,\xi_{i+1}]\subset \partial C\Sigma$ and concatenate them to form a loop $\Xi \subset \partial C \Sigma$. We also have that $\partial C\Sigma$ is a simply connected metric space, with a visual metric $d$ with parameter $a\in (0,1)$. This visual metric has the property that for any $E$ large enough, there's some $C>0$ such that if $r$ is the maximal number such that $d_\Sigma(\eta(r),\zeta(r))\leq E$, then $\frac{1}{C}a^r\leq d(\eta,\zeta)\leq Ca^r$, where $\eta(t),\zeta(t)$ are geodesic rays from $o$. Fix $E,C$ so that these estimates work. What this means is that if $d(\eta,\zeta)<\frac{1}{C}a^p$ then we would have some $r$ which is the maximal number such that $d_\Sigma(\eta(r),\zeta(r))\leq E$, and 
$$\frac{1}{C}a^r\leq d(\eta,\zeta)<\frac{1}{C}a^p$$
which implies $r>p$. 

Fix $K$ as in \autoref{bosmcsc1}.
There are finitely many pairs of curves up to homeomorphism of intersection number less than $K$. For each of those pairs $\{x,y\}\subset C\Sigma$ fix a path $[x,y]$ in $C\Sigma$ (which should be the same path up to mapping class group action on $C\Sigma$), and for each of those triples, fix a disk $D_{x,y,z}$ and a triangulation of it, by simple-connectedness of $C\Sigma$. Let $\kappa=\max_{x,y,z} \diam (D_{x,y,z})$. 
% Choose $C > \max_{i(x,y,z)<N} diam(D_{x,y,z})$. 
% Then, since each we have in the triangulation from $\Phi(Y')$ that locally the triangles are of bounded intersection, we can insert these predetermined disks using some homeomorphism of the surface so that $\Phi(Y')$ is filled in by a disk that is at least $M$ from the origin.

Let $$\omega=E+M+K+\kappa+\delta+1$$ By \autoref{triangulate}, we can find a triangulation $\Delta$ of $\Xi$ with mesh size $<1/C a^L$ for some
$L>100\omega$.
We may assume that $\{\xi_i\}\subset V(\Delta)$. So this means for each edge $\{\xi,\eta\}\in E(\Delta)$, by the remark above, we have that there's some $r>L$ such that $d_\Sigma(\xi(r),\eta(r))\leq E$. Now by choice of $L$ and the hyperbolicity of $C\Sigma$, $\xi,\eta$ are $\delta$-fellow traveling for at least $t\in [0,99\omega]$. Let $s = 50\omega$, and look at $\{\xi(s)\}_{\xi\in V(\Delta)}$. For each $\{\xi,\eta\}\in E(\Delta)$, the geodesic in the curve complex $g_{\xi,\eta}=[\xi(s),\eta(s)]$ is such that $d_\Sigma(o,g_{\xi,\eta})> 48\omega$. Let $g_e=\cup_{\xi,\eta}g_{\xi,\eta}$ which is a loop. 

For each triple $\{\xi,\eta,\zeta\}\in \Delta$, we have that $g_{\xi,\eta},g_{\xi,\zeta},g_{\zeta,\eta}$ form a loop $g_{\xi,\eta,\zeta}$, where $\diam(g_{\xi,\eta,\zeta})\leq 2E$. Then by \autoref{bosmcsc1} there's a mesh $\Delta_{\xi,\eta,\zeta}$ of $g_{\xi,\eta,\zeta}$ with $d_\Sigma(\Delta_{\xi,\eta,\zeta},g_{\xi,\eta,\zeta})< 2\kappa$ for the same $K$ as above. So then taking the union of all these coarse triangulations, let $\Delta_e=\cup_{\xi,\eta,\zeta} \Delta_{\xi,\eta,\zeta}$. which is a coarse triangulation of $g_e$ and $d_\Sigma(\Delta_e,o)> 44\omega$. 

Now, every triple $\{\xi,\eta,\zeta\}$ of $\Delta_e$ is such that $i(\{\xi,\eta,\zeta\})<K$ by \autoref{bosmcsc1}. For each triple $\{\xi,\eta,\zeta\}$, by the mapping class group action we may find some $D_{x,y,z}$ with $\diam (D_{x,y,z})\leq\kappa$ and $\phi\in Mod(\Sigma)$ such that $V(\phi(D_{x,y,z}))=\{\xi,\eta,\zeta\}$. Denote $D_{\{\xi,\eta,\zeta\}}=\phi(D_{x,y,z})$. Then the union of all these triangulations $D_{\{\xi,\eta,\zeta\}}$ is a triangulation $\Gamma$ of some loop $Y$ which contains all the boundary vertices $\{\xi(r)\}_{\xi\in V(\Delta)}$. In particular, all the $\{\xi_i(s)\}\subset Y$ where $\xi_i$ were the rays that pass through $x_i$. By above, we get that $\xi_i,\xi_{i+1}$ are $\delta$-fellow traveling up to time $s$.

Also, $d_\Sigma(\Gamma,o)> 43\omega > M$, and of course $d_\Sigma(Y,o)>M$ since $Y\subset \Gamma$. 

\end{proof}

\begin{theorem}\label{simcon}
    For $\xi(\Sigma)\geq 4$, there is some $M$ such that for any $R$, any loop $X\subset C\Sigma\setminus B_{R+M}$ may be triangulated by some $\Delta\subset C\Sigma \setminus B_{R}$.
\end{theorem}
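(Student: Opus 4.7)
The plan is to use \autoref{importantlemma} to push $X$ out to a loop $Y$ far from $o$ that already admits a triangulation, and then fill the annular region between $X$ and $Y$ using the $\delta$-fellow traveling of the geodesic rays the lemma produces.

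To begin, I fix the hyperbolicity constant $\delta$ of $C\Sigma$ and choose $M$ large compared to $\delta$ and the constants of \autoref{importantlemma} and \autoref{coollemma}. Given $R$ and a loop $X = \{x_0, \dots, x_n = x_0\} \subset C\Sigma \setminus B_{R+M}$, for each $i$ pick a geodesic ray $\xi_i$ from $o$ through $x_i$ and apply \autoref{importantlemma} with input $R + 1$. This produces an enlarged family $\{\gamma_j\} \supset \{\xi_i\}$ of rays with consecutive rays $\delta$-fellow traveling up to some time $r$, a loop $Y$ with $V(Y) = \{\gamma_j(r)\}$, and a triangulation $\Delta$ of $Y$ with $d_\Sigma(\Delta, o) > R + 1$. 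Thus $\Delta$ already fills $Y$ inside $C\Sigma \setminus B_R$, and the remaining task is to triangulate the annulus between $X$ and $Y$ using only vertices outside $B_R$.

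Next I decompose the annulus into sectors, one per edge $(x_i, x_{i+1})$ of $X$. Within a sector, the intermediate rays $\gamma_j$ (for indices $j$ strictly between those of $\xi_i$ and $\xi_{i+1}$) serve as additional spokes, truncated at a common height $H$ satisfying $R + \delta < H < R + M$. Between two consecutive truncated spokes $\gamma_j$ and $\gamma_{j+1}$, fellow-traveling gives $d_\Sigma(\gamma_j(s), \gamma_{j+1}(s)) \leq \delta$ for every integer $s \in [H, r]$, so I triangulate the resulting rectangular strip by a ladder whose rungs are short paths of auxiliary vertices between $\gamma_j(s)$ and $\gamma_{j+1}(s)$, and whose cells between consecutive rungs are filled by triangles. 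Every vertex so introduced lies at distance at least $s - \delta \geq H - \delta > R$ from $o$, hence outside $B_R$. These strips glue along the arc of $Y$ at the top and along the horizontal polygonal path $\xi_i(H), \gamma_{j(i)+1}(H), \dots, \xi_{i+1}(H)$ at height $H$ at the bottom.

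What remains in each sector is the \emph{bottom cap}: the region bounded by the edge $(x_i, x_{i+1})$, the ray segments of $\xi_i, \xi_{i+1}$ running from heights $t_i := d_\Sigma(o, x_i)$ and $t_{i+1}$ down to $H$, and the horizontal polygonal path at height $H$. Every boundary vertex of this cap is at distance at least $H > R$ from $o$, but the boundary loop itself can have large $C\Sigma$-diameter because both the ray segments (of length $t_i - H$) and the polygonal path (with potentially many intermediate $\gamma_j(H)$'s) can be long, so applying \autoref{coollemma} directly could introduce vertices in $B_R$. The main obstacle I expect is precisely this cap step. I plan to handle it by iterating the ladder/fellow-traveling construction inside the cap: since any two consecutive $\gamma_j(H)$'s are $\delta$-close in $C\Sigma$ and the ray segments of $\xi_i, \xi_{i+1}$ are themselves geodesic edge-paths, the cap decomposes into a grid of small quadrilaterals, each triangulable within the $\delta$-neighborhood of its vertices and hence staying above height $H - \delta > R$. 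Assembling $\Delta$ with the strip and cap triangulations of all sectors then yields a triangulation of $X$ inside $C\Sigma \setminus B_R$, completing the proof.
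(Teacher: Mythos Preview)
Your overall strategy matches the paper's: push $X$ out via \autoref{importantlemma} to a loop $Y$ with a triangulation far from $o$, then fill the annulus between $X$ and $Y$ by laddering along fellow-traveling rays and applying \autoref{coollemma} to the small cells that result.

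The gap is in the bottom-cap step. Truncating the intermediate spokes at a height $H$ strictly below $X$ creates a region you cannot fill as you describe. The cap is bounded on one side by the single edge $(x_i,x_{i+1})$ and on the other by the horizontal path $\xi_i(H),\gamma_{j(i)+1}(H),\dots,\xi_{i+1}(H)$; the number of intermediate $\gamma_j$'s here is the number of subdivision points \autoref{importantlemma} inserts on the boundary arc from $\xi_i$ to $\xi_{i+1}$, and this is not bounded in terms of $\Sigma$ alone. A ``grid'' would require vertical lines from each $\gamma_j(H)$ up to the edge $(x_i,x_{i+1})$, but the intermediate spokes do not land on that edge: at height $t_i$ they hit points $\gamma_j(t_i)$ that are only $\delta$-close to their neighbours, not within any fixed distance of $x_i$ or $x_{i+1}$. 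So after gridding you are left with a bigon at height $t_i$ (the edge $(x_i,x_{i+1})$ versus the path $\{\gamma_j(t_i)\}$) of uncontrolled diameter, and \autoref{coollemma} cannot fill it within a uniform neighbourhood. The paper avoids all of this by never going below $X$: it takes the quadrilateral with sides $[x_i,x_{i+1}]$ (length $1$), the ray segments $[x_i,y_i]$, $[x_{i+1},y_{i+1}]$, and $[y_i,y_{i+1}]$ (length $\le\delta$), uses thin-quadrilateral hyperbolicity to get that the two long sides $2\delta$-fellow-travel, and then ladders between them to produce cells of diameter at most $2\delta+1$, each filled by \autoref{coollemma} within $L(2\delta+2)$. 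Taking $M>L(2\delta+2)$ keeps everything outside $B_R$. You should drop the truncation to $H$ and argue this way; the intermediate rays are not needed for the annulus filling.
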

\begin{proof}
    Fix $o$ the origin for $C\Sigma$, any constant $R$, and let $M> L(2\delta+2) $ where $L$ is from \autoref{coollemma} and $\delta$ is the hyperbolicity constant.
    
    Let $X\subset C\Sigma \setminus B_{R+M}$. By \autoref{importantlemma}, there's a loop $Y=\{y_i\}$ and a triangulation $\Gamma\subset CS\setminus B_{R+2M}$ of $Y$. Label $y_i=\xi_i(r)\in Y$ for each $x_i\in X$. 

    %Now fix $i,j$ and 
    Look at the loop $\alpha$ defined by the concatenation $[x_i,y_i],[y_i,y_{i+1}], [y_{i+1},x_{i+1}],[x_i,x_{i+1}]$ where these $[x_i,y_i]$ are the geodesics from \autoref{importantlemma} and $[y_i,y_{i+1}]$ is a segment of $Y$. %Also look at the triangle $[x_i,y_{i,n_i}],[y_{i,n_i},x_{i+1}],[x_i,x_{i+1}]$. 
    It suffices to construct a triangulation $T_{i}$ of these loops with $d_\Sigma(T_{i},o)>R$, for then we would take the union of these triangulations together with $\Gamma$ to form $\Delta$.

    Say $a,b,c,d$ are geodesic segments and the concatenation form a loop $[a,b,c,d]$. Suppose $a$ has length 1, $c$ has length $\leq \delta$ the hyperbolicity constant of $C\Sigma$ and $b,d$ are the same length. We want a triangulation $T$ with $d_\Sigma(T,[a,b,c,d])<M$ where $M$ depends on $\Sigma$. Note that the above loops satisfy this by their construction in \autoref{importantlemma}.

    By the thin triangles condition, $b,d$ are $2\delta$ fellow traveling. So parameterize $b=b(t), d=d(t)$ where $d_S(d(t),b(t))<2\delta$. Now fix geodesics $[b(t),d(t)]$ for each $t$. The loop obtained by concatenating $[b(t),d(t)],[d(t),d(t+1)],[b(t+1),d(t+1)][b(t),b(t+1)]$ has diameter $<2\delta+1$ so may be triangulated by a triangulation with diameter $<L(2\delta+2)$ by \autoref{coollemma}. Then we are done since $M> L(2\delta+2)$.
\end{proof}

Now we show that there is some $N$ depending only on $\Sigma$ so that $B_r$ is $N$-almost simply connected.

\begin{theorem}\label{ball}
    There is some $N=N(\Sigma)$ such that for any origin $o$ and $r$, $B_r(o)$ is $N$-almost simply connected.
\end{theorem}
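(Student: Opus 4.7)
The plan is to build, for any loop $\gamma \subset B_r(o)$, a continuous filling by constructing a geodesic cone from $o$ and filling each of its triangular faces using the $\delta$-hyperbolicity of $C\Sigma$. First I reduce to combinatorial loops: any continuous loop in $C\Sigma$ is homotopic within a unit neighborhood to a loop in the $1$-skeleton, so it suffices to handle $\gamma = (x_0, x_1, \ldots, x_n = x_0)$ with each $x_i \in B_r$. For each $i$, fix a geodesic $g_i$ from $o$ to $x_i$ in $C\Sigma$. Consecutive geodesics $g_i, g_{i+1}$, together with the edge $[x_i, x_{i+1}]$, bound a geodesic triangle $T_i$ entirely contained in $B_r$. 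I will fill each $T_i$ with a continuous disk $D_i$ contained in a uniform neighborhood of $T_i$, then glue the $D_i$ along the shared spokes $g_i$ to obtain a filling of $\gamma$.

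The filling of a single $T_i$ proceeds exactly in the spirit of the ladder construction at the end of the proof of \autoref{simcon}. Since $d(x_i, x_{i+1}) = 1$ and both $g_i, g_{i+1}$ emanate from $o$, the thin-triangle condition gives parametrizations under which $d(g_i(t), g_{i+1}(t)) \leq 2\delta$ for all $t$ up to $\min(r_i, r_{i+1}) - O(\delta)$, where $r_j := d(o, x_j)$. For each integer $t$ in this fellow-traveling range, pick a geodesic rung $p_t$ of length at most $2\delta$ from $g_i(t)$ to $g_{i+1}(t)$. These rungs subdivide the fellow-traveling portion of $T_i$ into quadrilaterals of perimeter at most $4\delta + 2$ and hence diameter at most $2\delta + 1$; by \autoref{coollemma} each admits a triangulated filling of diameter at most $L(2\delta + 2)$. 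What remains of $T_i$ is a small cap region near $x_i, x_{i+1}$, whose diameter is bounded purely in terms of $\delta$ since $|r_i - r_{i+1}| \leq 1$; this cap is filled by another application of \autoref{coollemma}. The resulting $D_i$ therefore lies within the $N_0(\Sigma)$-neighborhood of $T_i \subset B_r$, where $N_0$ depends only on $\delta$ and $L$.

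Consecutive triangles $T_i$ and $T_{i+1}$ share the spoke $g_{i+1}$, so if the rung and cap fillings are chosen consistently on both sides of each spoke, the $D_i$ glue along their common boundaries to a continuous disk with boundary $\gamma$ contained in $B_{r+N_0}$. Setting $N := N_0$ gives the required uniform constant depending only on $\Sigma$. The main obstacle I anticipate is the bookkeeping required to make the rung choices on each spoke $g_i$ and the cap fillings near each vertex $x_i$ agree when viewed from both $T_{i-1}$ and $T_i$; this is handled by fixing the rungs $p_t$ and the cap filling once per spoke before beginning the filling of any triangle. A secondary subtlety is the asymmetry when $r_i \neq r_{i+1}$, which causes $g_i, g_{i+1}$ to diverge before reaching their endpoints, but since $|r_i - r_{i+1}| \leq 1$ the divergence region has diameter bounded by an absolute constant and poses no additional difficulty.
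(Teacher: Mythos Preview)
Your proposal is correct and follows essentially the same approach as the paper: cone the loop from a fixed basepoint, use $\delta$-thinness of triangles to get fellow-travel between adjacent spokes, chop each triangle into rung-quadrilaterals of diameter $O(\delta)$, and fill each small loop via \autoref{coollemma}. The only difference is cosmetic: you cone from the center $o$, whereas the paper cones from the vertex $x_0$ of the loop; your choice is in fact slightly cleaner since geodesics from $o$ to points of $B_r$ lie in $B_r$ on the nose, so the triangles $T_i$ are already contained in $B_r$ without appealing to thin triangles a second time.
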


\begin{proof}
    Fix $K$ and let $L(K,\Sigma)$ be the constant from \autoref{coollemma}.

    Fix any loop $\{x_0,...,x_n\}=\gamma\subset B_r$ and for each $i$ let $\gamma_i$ be a geodesic from $x_0$ to $x_i$. It suffices to show there is some $N$ such that any loop $\gamma_i*[x_i,x_{i+1}] * \gamma_{i+1}^{-1}$ may be triangulated inside of $B_{r+N}$.

    So take $x_0,x_i,x_{i+1}$ and let $\gamma_i=\{x_0=a_0,a_1,...a_l=x_i\}$ and $\gamma_{i+1}=\{x_0=b_0,...,b_k=x_{i+1}\}$ where $|k-l|\leq 1$. By hyperbolicity, $d(a_j,b_j)\leq 2\delta$. This is because hyperbolicity implies that there is some $b^*$ on $\gamma_{i+1}$ such that $d(a_j, b^*) \leq \delta$. Since $b^*$ is on the geodesic $\gamma_{i+1}$ we have $d(b^*, b_j) = \abs{d(x_0, b_j) - d(x_0, b^*)}$. By definition $d(x_0, b_j) = j$ and
    $$d(x_0, b^*) \leq d(x_0, a_j) + d(a_j, b^*) \leq j + \delta$$
    Thus $d(b_j, b^*) \leq \delta$. Putting this all together we get 
    $$d(a_j, b_j) \leq d(a_j, b^*) + d(b^*, b_j) \leq 2\delta$$
    
    Now let $[a_j,b_j]$ be a geodesic from $a_j$ to $b_j$ and look at the loop
    $$ l_j = [a_j, b_j] * [b_j, b_{j+1}] * [b_{j+1}, a_{j+1}] * [a_{j+1}, a_j]$$
    which has diameter at most $2\delta+2$. Then we may triangulate the loop with a triangulation $\Delta$ with $d(\Delta,l_j)< L(2\delta+3)$ by \autoref{coollemma}. Since each of these loops $l_j\subset B_r$, we get a triangulation $\Delta'$ of $\gamma$ with $\Delta'\subset B_{r+L(2\delta+3)}$ as needed.
\end{proof}

The constant $L$ depends on the values from the map $\Phi$, the hyperbolicity constant of $C\Sigma$, and also the finitely many fixed triangulations from \autoref{coollemma}. We conjecture that it does not depend on the surface $\Sigma$.

\bibliographystyle{plain}

\begin{thebibliography}{10}

\bibitem{bourque21}
Maxime~Fortier Bourque, Dídac Martínez-Granado, and Franco~Vargas Pallete.
\newblock The extremal length systole of the bolza surface, 2021.

\bibitem{ELC}
Jeffrey~F. Brock, Richard~D. Canary, and Yair~N. Minsky.
\newblock The classification of {K}leinian surface groups, {II}: {T}he ending
  lamination conjecture.
\newblock {\em Ann. of Math. (2)}, 176(1):1--149, 2012.

\bibitem{duchin10}
Moon Duchin and Kasra Rafi.
\newblock Divergence of geodesics in teichmuller space and the mapping class
  group, 2010.

\bibitem{Farb11}
Benson Farb and Dan Margalit.
\newblock In {\em A primer on mapping class groups}, 2011.

\bibitem{Gabai1}
David Gabai.
\newblock Almost filling laminations and the connectivity of ending lamination
  space.
\newblock {\em Geom. Topol.}, 13(2):1017--1041, 2009.

\bibitem{Gabai2}
David Gabai.
\newblock On the topology of ending lamination space.
\newblock {\em Geom. Topol.}, 18(5):2683--2745, 2014.

\bibitem{Harvey}
W.~J. Harvey.
\newblock Boundary structure of the modular group.
\newblock In {\em Riemann surfaces and related topics: {P}roceedings of the
  1978 {S}tony {B}rook {C}onference ({S}tate {U}niv. {N}ew {Y}ork, {S}tony
  {B}rook, {N}.{Y}., 1978)}, Ann. of Math. Stud., No. 97, pages 245--251.
  Princeton Univ. Press, Princeton, NJ, 1981.

\bibitem{klarreich2018boundary}
Erica Klarreich.
\newblock The boundary at infinity of the curve complex and the relative
  {T}eichm\"{u}ller space.
\newblock {\em Groups Geom. Dyn.}, 16(2):705--723, 2022.

\bibitem{MMII}
H.~A. Masur and Y.~N. Minsky.
\newblock Geometry of the complex of curves. {II}. {H}ierarchical structure.
\newblock {\em Geom. Funct. Anal.}, 10(4):902--974, 2000.

\bibitem{MMI}
Howard~A. Masur and Yair~N. Minsky.
\newblock Geometry of the complex of curves. {I}. {H}yperbolicity.
\newblock {\em Invent. Math.}, 138(1):103--149, 1999.

\bibitem{munkres00}
J.R. Munkres.
\newblock {\em Topology}.
\newblock Featured Titles for Topology. Prentice Hall, Incorporated, 2000.

\bibitem{rafi14}
Kasra Rafi.
\newblock {Hyperbolicity in Teichmüller space}.
\newblock {\em Geometry \& Topology}, 18(5):3025 -- 3053, 2014.

\bibitem{wright2024}
Alex Wright.
\newblock {Spheres in the curve graph and linear connectivity of the Gromov
  boundary}.
\newblock {\em Communications of the American Mathematical Society},
  4:548--577, 09 2024.

\end{thebibliography}

\end{document}